% SIAM Article Template
\documentclass[onefignum,onetabnum]{siamart171218}

\synctex=1

% Information that is shared between the article and the supplement
% (title and author information, macros, packages, etc.) goes into
% ex_shared.tex. If there is no supplement, this file can be included
% directly.
% Notes
%\usepackage[usenames,dvipsnames]{xcolor}
%\definecolor{purple}{rgb}{0.3,0.0,.4}

\newcommand{\jatnote}[1]{\textcolor{blue}{\textbf{[JAT: #1]}}}
\renewcommand{\jatnote}[1]{}

% for restatement of lemmas and theorems
\usepackage{thmtools}
\usepackage{thm-restate}
\usepackage{hyperref}
\usepackage{xspace}
\usepackage{amssymb}% http://ctan.org/pkg/amssymb
\usepackage{pifont}% http://ctan.org/pkg/pifont
\usepackage{accents}
\newcommand{\ubar}[1]{\underaccent{\bar}{#1}}

% some traditional defintions that can be blamed on craig barratt
\newcommand{\defn}{:\,=}
\newcommand{\BEAS}{\begin{eqnarray*}}
\newcommand{\EEAS}{\end{eqnarray*}}
\newcommand{\BEA}{\begin{eqnarray}}
\newcommand{\EEA}{\end{eqnarray}}
\newcommand{\BEQ}{\begin{equation}}
\newcommand{\EEQ}{\end{equation}}
\newcommand{\BIT}{\begin{itemize}}
\newcommand{\EIT}{\end{itemize}}
\newcommand{\BNUM}{\begin{enumerate}}
\newcommand{\ENUM}{\end{enumerate}}

\newcommand{\beas}{\begin{eqnarray*}}
\newcommand{\eeas}{\end{eqnarray*}}
\newcommand{\bea}{\begin{eqnarray}}
\newcommand{\eea}{\end{eqnarray}}
\newcommand{\beq}{\begin{equation}}
\newcommand{\eeq}{\end{equation}}
\newcommand{\bit}{\begin{itemize}}
\newcommand{\eit}{\end{itemize}}
\newcommand{\ben}{\begin{enumerate}}
\newcommand{\een}{\end{enumerate}}

\newcommand{\ba}{\begin{array}}
\newcommand{\ea}{\end{array}}
\newcommand{\bbm}{\begin{bmatrix}}
\newcommand{\ebm}{\end{bmatrix}}

% text abbrevs

%%% Modern composition rules set these Latin abbreviations in roman text.
%%% Exempli gratia and id est have a comma afterward.
%%% cf. The Chicago Manual of Style.

\newcommand{\eg}{e.g., }
\newcommand{\ie}{i.e., }

\newcommand{\ones}{\mathbf 1}

% std math stuff
\newcommand{\reals}{{\mbox{\bf R}}}

\newcommand{\sym}{{\mbox{\bf S}}}  % symmetric matrices

% lin alg stuff

\newcommand{\range}{\mathop{\bf range}}
\newcommand{\rank}{\mathop{\bf rank}}
\newcommand{\nullspace}{{\mathop {\bf null}}}
\newcommand{\tr}{\mathop{\bf tr}}

\newcommand{\diag}{\mathop{\bf diag}}

% norm
\newcommand{\twonorm}[1]{\left\|#1\right\|_2}
\newcommand{\norm}[1]{\left\|#1\right\|}
\newcommand{\fronorm}[1]{\left\|#1\right\|_{\mbox{\tiny{F}}}}
\newcommand{\opnorm}[1]{\left\|#1\right\|_{\mbox{\tiny{\textup{op}}}}}
\newcommand{\nucnorm}[1]{\left\|#1\right\|_*}

% probability stuff

% convexity & optimization stuff

%\renewcommand{\QED}{~~\rule[-1pt]{8pt}{8pt}}\def\qed{\QED}

\newcommand{\argmin}{\mathop{\rm argmin}}

\newcommand{\conv}{\mathop{\bf conv}}

% for statistics

% for prettiness

% for constants

\newcommand{\constantA}{\frac{\delta_S}{\sigma_{\min}(\mathcal{A}_V)} +\opnorm{S}}
\newcommand{\constantD}{\frac{\constantE}{1+\kappa_V}}
\newcommand{\constantE}{\phi}
\newcommand{\constantB}{\frac{1}{4}\biggr[ \sqrt{2\constantE^2+4\biggr(\constantA\biggr) +4\constantD}+\sqrt{2}\constantE\biggr]^2}

%% theorems
%\theoremstyle{definition} % don't italicize theorems
%\newtheorem{theorem}{Theorem}
%\newtheorem{lemma}{Lemma}
%\newtheorem{corollary}{Corollary}
%\newtheorem{prop}{Proposition}
%\newtheorem{definition}{Definition}
%\newtheorem{remark}{Remark}
%% alg description --- not much for now!
%\newenvironment{algdesc}%
%{\begin{quote}}{\end{quote}}
%
%% figures
%\def\figbox#1{\framebox[\hsize]{\hfil\parbox{0.9\hsize}{#1}}}
%% figure size
%\makeatletter
%\newlength \figwidth
%\if@twocolumn
%  \setlength \figwidth {0.9\columnwidth}
%\else
%  \setlength \figwidth {0.5\textwidth}
%\fi
%\makeatother

% \showthe\figwidth

% captions a la sirev
%\makeatletter
%\long\def\@makecaption#1#2{
%   \vskip 9pt
%   \begin{small}
%   \setbox\@tempboxa\hbox{{\bf #1:} #2}
%   \ifdim \wd\@tempboxa > 5.5in
%        \begin{center}
%        \begin{minipage}[t]{5.5in}
%        \addtolength{\baselineskip}{-0.95pt}
%        {\bf #1:} #2 \par
%        \addtolength{\baselineskip}{0.95pt}
%        \end{minipage}
%        \end{center}
%   \else
%	\hbox to\hsize{\hfil\box\@tempboxa\hfil}
%   \fi
%   \end{small}\par
%}
%\makeatother

%bigO
\newcommand{\bigO}[1]{\mathcal{O}(#1)}

%for SDP slackness project
\newcommand{\pval}{p_\star}
\newcommand{\dval}{d_\star}
\newcommand{\dm}{n}
\newcommand{\cons}{m}
\newcommand{\rsol}{r_\star}
\newcommand{\xsol}{X_\star}
\newcommand{\ysol}{y_\star}
\newcommand{\zsol}{Z(\ysol)}%{Z_\star}

\newcommand{\ssol}{S_\star}
\newcommand{\xsolset}{\mathcal{X}_\star}

\newcommand{\vspacex}{\mathcal{V}_\star}
\newcommand{\uspacex}{\mathcal{U}_\star}

\newcommand{\vrepox}{{V_\star}}
\newcommand{\urepox}{{U_\star}}

\newcommand{\infeasx}{X_{\textup{infeas}}}
\newcommand{\objx}{X_{\text{obj}}}
\newcommand{\objp}{g_\alpha}
\newcommand{\objc}{C}
\newcommand{\Amap}{\mathcal{A}}
\newcommand{\bvec}{b}

\newcommand{\Av}[1]{\Amap_{#1}}

\newcommand{\ineqov}[1]{\overset{#1}{\leq}}
\newcommand{\alg}{\mathcal{G}}

\newcommand{\trux}{\bar{X}}
%matrix completion

%problem name
\newcommand{\minfeas}{MinFeasSDP\xspace}
\newcommand{\minobj}{MinObjSDP\xspace}

% SIAM Shared Information Template
% This is information that is shared between the main document and any
% supplement. If no supplement is required, then this information can
% be included directly in the main document.

% Packages and macros go here
\usepackage{lipsum}
\usepackage{amsfonts}
\usepackage{graphicx}
\usepackage{epstopdf}
\usepackage{algorithmic}
\ifpdf
  \DeclareGraphicsExtensions{.eps,.pdf,.png,.jpg}
\else
  \DeclareGraphicsExtensions{.eps}
\fi

% Add a serial/Oxford comma by default.

% Used for creating new theorem and remark environments
\newsiamremark{remark}{Remark}
\newsiamremark{hypothesis}{Hypothesis}
\crefname{hypothesis}{Hypothesis}{Hypotheses}
\newsiamthm{claim}{Claim}

% Sets running headers as well as PDF title and authors
\headers{Solving SDPs via Approx. Complementarity}{}

% Title. If the supplement option is on, then "Supplementary Material"
% is automatically inserted before the title.
\title{An Optimal-Storage Approach to Semidefinite Programming using Approximate Complementarity}
%	\thanks{%Submitted to the editors Feb 14, 2019 .
%\funding{Lijun Ding and Madeleine Udell were supported in part by DARPA Award FA8750-17-2-0101. Parts of this research were conducted while Madeleine Udell was in residence at the Simons Institute. Alp Yurtsever and Volkan Cevher have received funding for this project from the European Research Council (ERC) under the European Union's Horizon $2020$ research and innovation programme (grant agreement no~$725594$-time-data), and from the Swiss National Science Foundation (SNSF) under grant number $200021\_178865/1$. Joel A.~Tropp was supported in part by ONR Awards No. N-00014-11-1002, N-00014-17-12146, and N-00014-18-12363.}}}

% Authors: full names plus addresses.
\author{Lijun Ding\thanks{Operations Research and Information Engineering, Cornell University, Ithaca, NY, USA
  (\email{ld446@cornell.edu}, \email{udell@cornell.edu}).}
\and Alp Yurtsever\thanks{Laboratory for Information and Inference Systems (LIONS), EPFL, \'{E}cublens, Vaud, Switzerland
  (\email{alp.yurtsever@epfl.ch}, \email{volkan.cevher@epfl.ch}).}
\and Volkan Cevher\footnotemark[3] \and \linebreak
	Joel A.~Tropp\thanks{Computing and Mathematical Sciences, California Institute of Technology, Pasadena,
	CA, USA (\email{jtropp@cms.caltech.edu}).}
\and Madeleine Udell\footnotemark[2]}

\usepackage{amsopn}
%\DeclareMathOperator{\diag}{diag}

%%% Local Variables:
%%% mode:latex
%%% TeX-master: "ex_article"
%%% End:

% Optional PDF information
\ifpdf
\hypersetup{
	pdftitle={An Optimal-Storage Approach to Semidefinite Programming using Approximate Complementarity},
	pdfauthor={Lijun Ding, Alp Yurtsever, Volkan Cevher, Joel A. Tropp and
		Madeleine Udell}
}
\fi
\usepackage{subcaption}
\usepackage{graphicx}

% The next statement enables references to information in the
% supplement. See the xr-hyperref package for details.
%\externaldocument{supplement}

% FundRef data to be entered by SIAM
%<funding-group>
%<award-group>
%<funding-source>
%<named-content content-type="funder-name">
%</named-content>
%<named-content content-type="funder-identifier">
%</named-content>
%</funding-source>
%<award-id> </award-id>
%</award-group>
%</funding-group>

\begin{document}

	\maketitle

	% REQUIRED
	\begin{abstract}
		This paper develops a new storage-optimal algorithm that provably
		solves almost all semidefinite programs (SDPs).
%
%       how about
%
%       regular semidefinite programs (SDPs), SDP satisfying strong duality,
%		primal and dual uniqueness, as well as strict complementarity.
%
%
		This method is particularly effective for weakly constrained SDPs.
		The key idea is to formulate an approximate complementarity principle:
		Given an approximate solution to the dual SDP,
		the primal SDP has an approximate solution whose range is
		contained in the eigenspace with small eigenvalues of the dual slack matrix.
		For weakly constrained SDPs, this eigenspace has very low dimension,
		so this observation significantly reduces the search space for the primal solution.
		This result suggests an algorithmic strategy that can be implemented with minimal storage:
		(1) Solve the dual SDP approximately;
		(2) compress the primal SDP to the eigenspace with small eigenvalues of the dual slack matrix;
		(3) solve the compressed primal SDP.
		The paper also provides numerical experiments showing that this approach is successful
		for a range of interesting large-scale SDPs.
%
%		MU abstract:
%
%		This paper develops the first storage-optimal method to provably solve
%		generic semidefinite programs (SDP).
%		Our method yields useful reductions in storage when
%		the solution to the SDP is low rank. % say something about uniqueness?
%		By complementarity of the primal and dual SDP solutions,
%		the range of the primal solution is in the null space of the dual.
%		Hence an exact dual solution can be used to reconstruct an exact primal
%		solution.
%		This paper shows how to reconstruct an approximate
%		primal solution derived from an approximate dual solution, and
%		guarantees the quality of this primal solution.
%		Given any $\epsilon$-suboptimal dual solution to a generic SDP,
%		our primal reconstruction method produces a $\bigO{\sqrt{\epsilon}}$-suboptimal primal solution
%		by solving a much smaller SDP.
%		This result leads to a natural algorithmic idea: solve the dual SDP,
%		and use the dual to compute an approximate primal.
%		We provide numerical experiments that demonstrate the resulting algorithmic
%		template works well for a range of interesting SDP.
	\end{abstract}

	% REQUIRED
	\begin{keywords}
		Semidefinite programs, Storage-optimality, Low rank, Complementary slackness,
		Primal recovery.
	\end{keywords}

	% REQUIRED
	\begin{AMS}
		90C06, 90C2, 49M05
	\end{AMS}
	\section{Introduction}\label{sec: introduction}

	Consider a semidefinite program (SDP) in the standard form
	\beq\label{p}\tag{P}
	\ba{ll}
	\mbox{minimize} & \tr(CX)\\
	\mbox{subject to} & \mathcal{A}X =  b
	\quad\text{and}\quad
	X \succeq 0. \\
	\ea
	\eeq
	The primal variable is the symmetric, positive-semidefinite matrix
	$X \in \sym^\dm_+$. The problem data comprises
	a symmetric (but possibly indefinite) objective matrix $C\in \sym^\dm$,
	a righthand side $b\in\reals^m$, and
	a linear map $\mathcal{A}: \reals^{\dm\times \dm} \rightarrow \reals^m$
	with rank $m$, which can be expressed explicitly as
	$[\mathcal{A}H]_i = \tr(A_iH),i=1,\dots,m$ for some symmetric $A_i\in \sym^\dm$ and any $H\in \reals^{\dm \times \dm}$.
	The notation $\tr(\cdot)$ stands for
	the trace operation: $\tr(A) = \sum_{i=1}^n a_{ii}$ for any $A\in \reals^{\dm \times \dm}$ with $(i,j)$-th entry
	$a_{ij}\in \reals$.

	SDPs form a class of convex optimization problems
	with remarkable modeling power.  But SDPs are challenging to solve because they involve
	a matrix variable $X \in \sym_+^\dm \subset \reals^{n \times n}$ whose dimension $n$
	can rise into the millions or billions.
	For example, when using a matrix completion SDP in a recommender system,
	$n$ is the number of users and products;
	when using a phase retrieval SDP to visualize a biological sample,
	$n$ is the number of pixels in the recovered image.
	In these applications, most algorithms are prohibitively expensive
	because their storage costs are quadratic in $n$.

	How much memory should be required to solve this problem?
	Any algorithm must be able to query the problem data and
	to report a representation of the solution.
	Informally, we say that an algorithm uses \emph{optimal storage} if the working
	storage is no more than a constant multiple of the storage required for these
	operations~\cite{yurtsever2017sketchy}.
	(See \Cref{sec:storage-opt} for a formal definition.)

	It is not obvious how to develop storage-optimal SDP algorithms.
	To see why, recall that all \emph{weakly-constrained} SDPs ($m = \bigO{\dm}$)
	admit low-rank solutions~\cite{barvinok1995problems,pataki1998rank},
	which can be expressed compactly in factored form.  For these problems,
	a storage-optimal algorithm cannot even instantiate the matrix variable!
	One natural idea is to introduce
	an explicit low rank factorization of the primal variable $X$
	and to minimize the problem over the factors \cite{burer2003nonlinear}.

	Methods built from this idea provably work
	when the size of the factors is sufficiently large \cite{boumal2016non}.
	However, recent work~\cite{waldspurger2018rank} shows that they cannot provably solve
	all SDPs with optimal storage; see \Cref{sec:related-work}.

	In contrast, this paper develops a new algorithm that provably solves all
	\emph{regular} SDPs, \ie SDPs with
	strong duality, unique the primal and dual solutions,
	and strict complementarity.
	These standard conditions not only hold generically \cite[Definition 19]{alizadeh1997complementarity},
	but also in many applications \cite{LM2020regular}. We defer
	the detailed description of these conditions in \Cref{sec:assumptions}.

%\subsection{Main idea}
Our method begins with the Lagrange dual of the primal SDP \cref{p}, % is the semidefinite program
	\beq \label{d}\tag{D}
	\ba{ll}
	\mbox  {maximize} & b^\top y \\
	\mbox{subject to}  & C- \mathcal{A}^\top y\succeq 0 \\
	\ea
	\eeq
	 with dual variable $y \in \reals^m$.
	The vector $b^\top$ is the transpose of $b$,
	and the linear map $\mathcal{A}^\top: \reals^m \rightarrow \reals^{\dm\times \dm}$
	is the adjoint of the linear map $\mathcal{A}$. Note the range of $\mathcal{A}^\top$ is in $\sym^\dm$
	because $C$ and $A_i$s are symmetric.
	%We write $d_{\star}$ for the value of~\cref{d}.
%
%	In this paper, we develop a new approach to solving the primal SDP with optimal
%	storage.
%	Our method takes as input any putative solution of the dual SDP \cref{d},
%	and constructs from it an approximate primal solution.
%	This paper develops bounds on the suboptimality, infeasibility,
%	and distance from the solution of this approximate primal solution in terms of
%	the quality of the putative dual solution.
%	The resulting algorithm, considered as a primal solver, is the first
%	storage-optimal convex SDP solver.
%
It is straightforward to compute an approximate solution
to the dual SDP \cref{d} with optimal storage using methods described in \Cref{sec: sd}.
The challenge is to recover a primal solution
from the approximate dual solution.

To meet this challenge, we develop a new
\emph{approximate complementarity principle}
that holds for regular SDP:
Given an approximate dual solution $y$, we prove that
there is
a primal approximate solution $X$ whose range is contained in
the eigenspace with small eigenvalues of the dual slack matrix $C - \mathcal{A}^\top y$.
This principle suggests an algorithm:
we solve the primal SDP by searching over matrices with the
appropriate range.
This recovery problem is a (much smaller) SDP
that can be solved with optimal storage.

% \subsection{A Brief History of Storage}
%
% 	The first reliable SDP solvers were developed in the late 1980s,
% 	inspired by problems in control engineering~\cite{boyd1994linear}.
% 	The algorithms were based on interior-point methods~\cite{nesterov1994interior}.
% %	targeted smaller problems that arise in control engineering~\cite{boydLMIbook}.
% 	For these methods, arithmetic costs are often the limiting factor,
% 	so storage is not the primary concern.
% 	In the late 1990s, motivated by SDP relaxations
% 	of combinatorial problems~\cite{goemans1995improved},
% 	researchers began developing new algorithms that attempt to control storage costs.
% 	From this era, the heuristic factorization method of Burer \& Monteiro~\cite{burer2003nonlinear}
% 	remains popular, but recent work~\cite{waldspurger2018rank} highlights its limitations.
% %	even though it has not been fully justified.
% 	In the mid-2000s, applications of SDPs in machine learning led to a resurgence
% 	of interest in first-order algorithms, such as proximal gradient methods~\cite{rockafellar1970convex}
% 	and conditional gradient methods~\cite{hazan2008sparse, jaggi2013revisiting}.
% 	These methods have lower arithmetic costs than interior-point methods,
% 	but they typically store the full matrix variable.
% 	In summary, there is no provably correct algorithm for solving SDPs that
% 	has minimal storage costs.
% 	See \Cref{sec:related-work} for a more detailed discussion.

	\subsection{Regularity Assumptions}
	\label{sec:assumptions}

%	\jatnote{this is a more natural place to set out our main assumptions.}

	% We focus on a large class of SDPs that satisfy a few properties that
	% are standard in the optimization literature.
	% All of these conditions hold for \emph{generic} SDPs~\cite{alizadeh1997complementarity},
	% but we prefer to state the precise assumptions.

	First, assume that the primal~\cref{p} has a solution, say, $\xsol$
	and the dual~\cref{d} has a \emph{unique} solution $\ysol$.
	We require that strong duality holds:\vspace{-2pt}
	\begin{equation} \label{eqn:strong-duality}
	p_{\star} := \tr(C\xsol) = b^\top \ysol =: d_{\star}.
	\end{equation}
	The condition~\cref{eqn:strong-duality} follows, for example, from Slater's constraint qualification.

	Strong duality and feasibility imply that the solution $\xsol$
	and the dual slack matrix $C - \mathcal{A}^\top\ysol$
	satisfy the complementary slackness condition:
\vspace{-2pt}
	\begin{equation} \label{eqn:complementary-slackness}
	\xsol (C - \mathcal{A}^\top \ysol) = 0.
	\end{equation}
which implies that
\[
\rank(\xsol) + \rank(C - \mathcal{A}^\top\ysol) \leq  n.
\]
%	These conditions
	%--- primal attainability, dual uniqueness, and strong duality ---
%	suffice for the results in \Cref{sec: algorithmguarantee}
%	(and the associated lemmas \cref{lemma: minfeas0} and \cref{lemma: minfeas1}),
%	which bound the recovery error of our procedure
%	if the linear map $\mathcal A$ restricted to a certain subspace
	%restricted to the null space of the dual slack matrix
%	is nonsingular (see \eqref{eqn:reduced-constraint} and
%	\cref{primalrecovery} for the choice of the subspace).
%
	To ensure that we are not in a degenerate situation, we make the stronger assumption
	% we assume (in addition to primal attainability, dual uniqueness, and strong duality)
	that every solution pair $(\xsol, \ysol)$ satisfies
	the stronger \emph{strict complementarity} condition:\vspace{-2pt}
	\begin{equation} \label{eqn:strict-complement}
	\rank(\xsol) + \rank(C - \mathcal{A}^\top\ysol) = n.
	\end{equation}
	% As we will see, the strict complementarity assumption~\cref{eqn:strict-complement}
	% is a powerful tool for controlling storage costs.
	%
	Note that these assumptions ensure that \emph{all} solutions have the same rank,
	and therefore that the primal solution is actually \emph{unique} \cite[Corollary 2.5]{lemon2016low}.
	In particular, the rank $\rsol$ of the solution $\xsol$ satisfies
	the Barvinok--Pataki bound
	%\begin{equation} \label{eqn:rank-bound}
	$\binom{ \rsol + 1 }{ 2 } \leq m$.
%	\rsol \leq \frac{\sqrt{8m + 1} - 1}{2}.
	%\end{equation}

	To summarize, all results in this paper hold under the regularity assumptions:
	primal attainability, dual uniqueness, strong duality, and strict complementarity.
	These conditions hold generically conditioning on primal and dual attainability;
	\ie for every SDP satisfying primal and dual attainability
	outside of a set of measure $0$~\cite{alizadeh1997complementarity}. The conditions
	or a large fraction of them are also satisfied for non-generic SDP in applications \cite{LM2020regular}.

	\subsection{Optimal Storage}\label{sec:storage-opt}

	Following~\cite{yurtsever2017sketchy},
	let us quantify the storage necessary
	to solve every SDP~\cref{p}
	that satisfies our assumptions in \Cref{sec:assumptions}
	and that admits a solution with rank $\rsol$.
  %\jatnote{Need to be careful here!}
%
%	we observe that any algorithm must be able to query the problem data
%	and output a representation of the solution.

	First, it is easy to see that $\Theta(n\rsol)$ numbers are sufficient to represent
	the rank-$\rsol$ solution in factored form.  This cost is also necessary because
	\emph{every} rank-$\rsol$ matrix is the solution to some SDP from our
	problem class.

%	Second, an SDP from our problem class requires $\Theta(m)$ numbers
%	just to store the right-hand side $b$ of the constraint.  Therefore,
%	it is reasonable to allow additional working storage of $\Theta(m)$.

%	To achieve this goal and
	To hide the internal complexity of the optimization problem~\cref{p},
	we will interact with the problem data using \emph{data access oracles}.
	Suppose we can perform any of the following operations
	on arbitrary vectors $u,v\in \reals^n$ and $y \in \reals^m$:\vspace{-2pt}
	\begin{equation}
		\begin{aligned}\label{eq: efficientOperation}
			u \mapsto Cu
			\quad\text{and}\quad
			(u,v) \mapsto \Amap (uv^\top)
			\quad\text{and}\quad
			(u, y) \mapsto (\mathcal A^\top y)u.
		\end{aligned}
	\end{equation}
	These oracles enjoy simple implementations in many concrete applications.
	The input and output of these operations clearly involve storing $\Theta(m+n)$ numbers.
%	These are natural linear oracles that can discriminate the problem data,

%	We assert that no linear data access oracles can discriminate the problem data
%	at lower cost.

	In summary, any method that uses these data access oracles to solve
	every SDP from our class must store $\Omega(m + n\rsol)$ numbers.
	We say a method has \emph{optimal storage} if the working storage
	provably achieves this bound.

	For many interesting problems, the number $m$ of constraints is
	proportional to the dimension $n$.  Moreover, the rank $\rsol$ of
	the solution is constant or logarithmic in $n$.  In this case,
	a storage-optimal algorithm has working storage $\tilde{\mathcal{O}}(n)$,
	where the tilde suppresses log-like factors.

%	In many interesting examples,
%	we can consider $r$ to be constant or logarithmic in $n$, so $r=\tilde O(1)$,
%	and we have a constant number of observations per dimension, so $m=O(n)$.
%	In this setting we see a storage optimal algorithm uses storage $\tilde O(n)$.
%	(Here $\tilde O$ suppresses any logarithmic factors.)

\begin{remark}[Applications]
	The algorithmic framework we propose is most useful
	when the problem data has an efficient
	representation and the three operations in~\cref{eq: efficientOperation}
	can be implemented with low arithmetic cost.
	For example, it is often the case that the matrix $C$ and the linear map $\mathcal{A}$
	are sparse or structured. % (\eg a discrete Fourier transform).
	This situation occurs in the maxcut relaxation~\cite{goemans1995improved},
	matrix completion~\cite{srebro2005rank}, %,candes2009exact},
	phase retrieval~\cite{chai2010array,waldspurger2015phase}, %,candes2015phase},
	and community detection~\cite{mathieu2010correlation}. % \cite{javanmard2016phase}.
	See \cite{recht2010guaranteed} for some other examples. We  expect that the assumptions mentioned in this paper (or most of them)  to be satisfied for these problems. The cases of matrix completion, and community detection have been verified in \cite{LM2020regular}.
\end{remark}

	\subsection{From Strict Complementarity to Storage Optimality} \label{sec:Main Idea}

% 	Next, let us explain how strict complementarity leads to a
% 	conceptual framework for storage-optimal semidefinite programming.
% %	See Table~\ref{tb: conceptualAndNumericalApproach}[left] for a summary.

	Suppose that we have computed the \emph{exact} unique dual solution $\ysol$.
	Complementary slackness~\cref{eqn:complementary-slackness}
	and strict complementarity~\cref{eqn:strict-complement}
	ensure that
	$$
	\range(\xsol) \subset \nullspace(C - \mathcal{A}^\top \ysol)
	\quad\text{and}\quad
	\dim( \nullspace(C - \mathcal{A}^\top \ysol)) = \rank(\xsol).
	$$
	Therefore, the slack matrix identifies the range of the primal solution.
	% Moreover, when~\cref{p} has a low-rank solution, we have isolated the solution
	% in a low-dimensional space of matrices.

	Let $\rsol$ be the rank of the primal solution.
	Construct an orthonormal matrix $\vrepox \in \reals^{n \times \rsol}$
	whose columns span $\nullspace(C - \mathcal{A}^\top\ysol)$.
	The compression of the primal problem~\cref{p}
	to this subspace is
	\beq\label{pp}
	\ba{ll}
	\mbox{minimize} & \tr( C \vrepox S \vrepox^\top )\\
	\mbox{subject to} & \mathcal{A}(\vrepox S \vrepox^\top) =  b \quad\text{and}\quad
	S\succeq 0.
	\ea
	\eeq
	The variable $S\in \sym^{\rsol}_+$ is a low-dimensional matrix when $\rsol$ is small.
	If $S_{\star}$ is a solution to~\cref{pp},
	then $\xsol = \vrepox S_{\star} \vrepox^\top$ is a solution
	to the original SDP~\cref{p}.

	This strategy for solving the primal SDP can be implemented
	with a storage-optimal algorithm.  Indeed, the variable $y$ in the
	dual SDP~\cref{d} has length $m$, so there is no obstacle to
	solving the dual with storage $\Theta(m + n)$
	%with
	%the matrix-free method from~\cite{diamond2016matrix,ocpb:16}\lnote{This is
	%not correct I think. Matrix-Free method listed here does require to store
%some dual variable (which would a primal variable if we are solving the dual) so storage is
%not $\Theta(m+n)$. I checked other matrix free method and I don't think they can}(Change
%this to:
using the subgradient type method described in \Cref{sec: sd}.
	We can compute the subspace $\vrepox$ using the randomized range finder~\cite[Alg.~4.1]{HMT11:Finding-Structure}
	with storage cost $\Theta(n \rsol)$.
	Last, we can solve	the compressed primal SDP~\cref{pp} using working storage
	$\Theta( m+n + \rsol^2 )$ via the matrix-free method from~\cite{diamond2016matrix,ocpb:16}.
	The total storage is the optimal $\Theta( m + n \rsol )$.
	Furthermore, all of these algorithms can be implemented with the data
	access oracles~\cref{eq: efficientOperation}.

	Hence --- assuming exact solutions to the optimization problems ---
	we have developed a storage-optimal approach to the SDP~\cref{p},
	summarized in Table~\ref{tb: conceptualAndNumericalApproach}[left].

	\subsection{The Approximate Complementarity Principle}
	\label{sec:approx-comp-intro}

	A major challenge remains:
	one very rarely has access to an exact dual solution!  Rather, we
	usually have an approximate dual solution, obtained via some iterative dual
	solver.

	%	At first, we might try a direct generalization of the method from~\Cref{sec:Main Idea}.
%%
%	Suppose that we form the analog of the SDP \cref{pp}
%	by compressing to the subspace $V$.
%%
%	Sadly, the compressed SDP is so sensitive to $\range(V)$ that
%	this approach fails completely.  Even if $y$ is a very accurate estimate
%	of the dual solution (\eg one computed by an interior-point method),
%	the SDP compressed to $V$ is usually infeasible.

%	see Table~\ref{tb: conceptualAndNumericalApproach}[right] for a summary.

	This observation motivates us to formulate a new \emph{approximate complementarity principle}.
	For now, assume that $\rsol$ is known.  Given an approximate dual solution $y$, we can
	construct an orthonormal matrix $V \in \reals^{n \times \rsol}$
	whose columns are eigenvectors of $C-\mathcal{A}^\top y$
	with the $\rsol$ smallest eigenvalues.  % (or some user specified $r\geq \rsol$)
	Roughly speaking, the primal problem~\cref{p} admits an \emph{approximate} solution $X$
	whose range is contained in $\range(V)$.  We show the approximate solution is close to the
	true solution as measured in terms of suboptimality, infeasibility,
	and distance to the solution set.
%	See~\Cref{sec:basics} for a more discussion on the quality of approximation.
	%This fact points us toward another class of reduced SDPs.

%	\jatnote{There is an unfortunate aspect of my framing, which is that the
%	theory implements approximate complementarity by analyzing
%	the robust SDPs.  I think this is okay since I'm calling it a ``principle.''
%	One could re-imagine this project by formulating a series of inequalities
%	that capture the concept of approximate complementarity.  Then the analysis of
%	the robust SDP, maybe, becomes a trivial application of the inequalities.}

%	Let $r \in \posint$ be an estimate for the rank of the solutions of~\cref{p}.
%	For an orthonormal matrix $U \in \reals^{\dm \times r}$,
%	Introduce the compressed constraint map
%	\begin{equation} \label{eqn:reduced-constraint}
%	\mathcal{A}_{V}(S) := \mathcal{A}( VSV^\top )
%	\quad\text{for $S \in \sym^{\rsol}$.}
%	\end{equation}
	We propose to recover the approximate primal solution by solving
	the semidefinite least-squares problem
	\beq
	\ba{ll}\label{minfeas}\tag{\minfeas}
	\mbox{minimize} & \frac{1}{2}\norm{\mathcal{A}(VSV^\top) -  b}^2\\
	\mbox{subject to} &  S\succeq 0
	\ea
	\eeq
	with variable $S\in \sym^{\rsol}_+$.  Given a solution $\hat{S}$
	to~\cref{minfeas}, we obtain an (infeasible) approximate  solution
	$\infeasx = V \hat{S} V^\top$ to the primal problem.

	In fact, it is essential to relax our attention to infeasible solutions
	because the feasible set of~\cref{p} should almost never
	contains a matrix with range $V$!
	This observation was very surprising to us,
	but it seems evident in retrospect.
	(For example, using a dimension-counting argument
	together with \cref{thm: uniqueness}.)
\begin{table}{\footnotesize
		\caption{Exact and Practical Primal Recovery}\label{tb:
			conceptualAndNumericalApproach}}
	\centering
	\begin{tabular}{|c|l|l|}
		\hline
		Step &		\textbf{Exact Primal Recovery}	 &  \textbf{Practical Primal Recovery} \\
		\hline
		$1$		&	Compute dual solution $ \ysol$ &  Compute approximate dual solution
		$y$\\
		\hline
		$2$		&	Compute basis $\vrepox$  & Compute $\rsol$ eigenvectors \\
		& for $\nullspace(C - \mathcal A^\top \ysol)$ & of $C - \mathcal A^\top y$ with smallest  eigenvalues; \\
		& &	collect as columns of matrix $V$\\
		\hline
		$3$	&		 Solve the compressed SDP \cref{pp} &  Solve \cref{minfeas}\\
		\hline
	\end{tabular}
\end{table}
	The resulting framework appears in \cref{tb: conceptualAndNumericalApproach}[right].
	This approach for solving \cref{p} leads to storage-optimal algorithms for the
	same reasons described in~\Cref{sec:Main Idea}.  Our first
	main result ensures that this technique results in a provably
	good solution to the primal SDP~\cref{p}.

	\begin{theorem*}[Main theorem, informal]\label{thm:informal mainthm}
		Instate the regularity assumptions of~\Cref{sec:assumptions}.
		Suppose we have found a dual vector $y$ with suboptimality
		$\epsilon := \dval - \bvec ^\top y \leq \mathrm{const}$.
		Consider the primal reconstruction $\infeasx$ obtained
		by solving~\cref{minfeas}. % with $r = \rsol$.
		Then we may bound the distance between $\infeasx$ to the primal solution $\xsol$ by
		\[ \fronorm{\infeasx-\xsol} = \mathcal{O}(\sqrt{\epsilon}). \]
		The constant in the $\mathcal O$ depends on the problem data $\Amap$,
		$\bvec$, and $\objc$.
	\end{theorem*}

	We state and prove the formal result as \cref{thm: minfeas}.
	As stated, this guarantee requires knowledge of the rank $r^\star$
	of the solution;
	in \Cref{sec: algorithmguarantee}, we
	obtain a similar guarantee using an estimate for $r^{\star}$.

	\subsection{Paper Organization}
	We discuss related work in \Cref{sec:related-work} with a focus on storage.
	\Cref{sec:basics} contains an overview of our notation and more detailed
	problem assumptions.
%	As discussed in \Cref{sec: assandqualofapprox},
%	the quality of an approximate recovered primal solution can be measured
%	by (1) its suboptimality, (2) its infeasibility, and (3)
%	its distance from the solution set.
	\Cref{sec: oracleinequalities} uses the approximate complementarity principle to develop
	practical, robust, and theoretically justified compressed SDPs such
	as \cref{minfeas} for solving~\cref{p}.
	These compressed SDPs are accompanied by detailed bounds on the quality of the
	computed solutions as compared with the true solution.
%	We bound these three quantities for solutions of our proposed two optimization
%	problems in \Cref{sec: oracleinequalities}
	\Cref{sec: algorithmguarantee} contains practical suggestions in
	solving these compressed SDPs such as choosing parameters, and
	checking the solution quality numerically.
%	These checks are important for building a reliable solver.
	Next, we turn to algorithms for solving the dual SDP:
	we explain how to compute an approximate dual solution efficiently in \Cref{sec: sd},
	which provides the last ingredient for a complete method to solve~\cref{p}.
	\Cref{sec: numerics} shows numerically that the method
	is effective in practice. We conclude the paper with a discussion on contributions and future research 
	directions 
	in \Cref{sec:conclusions}.

		\section{Related Work} \label{sec:related-work}

	Semidefinite programming can be traced to a 1963 paper of Bellman \& Fan~\cite{bellman1963systems}.
	Related questions emerged earlier in control theory,
	starting from Lyapunov's 1890 work on stability of dynamical systems.
	There are many classic applications in matrix analysis, dating to the 1930s.
	Graph theory provides another rich source of examples, beginning from the 1970s.
	See~\cite{boyd1994linear,vandenberghe1996semidefinite, todd2001semidefinite, boyd2004convex} for more history and problem formulations.

	\subsection{Interior-Point Methods}

	The first reliable algorithms for semidefinite programming were interior-point methods (IPMs).
	These techniques were introduced independently by Nesterov \& Nemirovski~\cite{nesterov1989self,nesterov1994interior}
	and Alizadeh~\cite{alizadeh1991combinatorial,alizadeh1995interior}. %\cite{Ali91:Combinatorial-Optimization} around 1990.
	%Many variants~\cite{???} were developed over the next decade.

	The success of these SDP algorithms motivated new applications.
	In particular, Goemans \& Williamson~\cite{goemans1995improved} %\cite{GW95:Improved-Approximation}
	used semidefinite programming to design an approximation algorithm
	to compute the maximum-weight cut in a graph.
	Early SDP solvers could only handle graphs with a few hundred vertices~\cite[Sec.~5]{goemans1995improved}
	%{GW95:Improved-Approximation},
	although computational advances quickly led to IPMs that could solve
	problems with thousands of vertices~\cite{benson2000solving}.

	IPMs form a series of unconstrained problems whose solutions are feasible for
	the original SDP, and move towards the solutions
	of these unconstrained problems using Newton's method.
	As a result, IPMs converge to high accuracy in very few iterations,
	but require substantial work per iteration.
	To solve a standard-form SDP with an $n \times n$ matrix variable
	and with $m$ equality constraints,
	a typical IPM requires
	$\bigO{\sqrt{n}\log(\frac{1}{\epsilon})}$ iterations to reach a solution with accuracy $\epsilon$
	 (in terms of objective value) \cite{nesterov2013introductory},
  and $\bigO{mn^3 + m^2 n^2 + m^3}$ arithmetic operations
  per iteration (when no structure is concerned)\cite{alizadeh1998primal},
	so $\bigO{\sqrt{n}\log(\frac{1}{\epsilon})(mn^3 + m^2 n^2 + m^3)}$ arithmetic operations in total.
	Further, a typical IPM requires at least $\Theta(n^2 + m + m^2)$ memory
	 not including the storage of data representation (which
	takes $\Theta(n^2 m)$ memory if no structure is assumed)\cite{alizadeh1998primal}.

	As a consequence, these algorithms are not effective for solving large problem instances,
	unless they enjoy a lot of structure.
	Hence researchers began to search for methods that could scale to larger problems.

	\subsection{First-Order Methods}

	One counterreaction to the expense of IPMs was to develop first-order optimization algorithms for SDPs.
	This line of work began in the late 1990s,
	and it accelerated as SDPs emerged in the machine learning and signal processing literature in the 2000s.

	Early on, Helmberg \& Rendl~\cite{helmberg2000spectral} %\cite{HR97:Spectral-Bundle}
	proposed
	a spectral bundle method for solving an SDP in dual form,
	and they showed that it converges to a dual solution when the trace of $\xsol$ is constant.
	In contrast to IPMs, the spectral bundle method has low per iteration complexity.
  On the other hand, the convergence rate is not known, and there is no convergence guarantee
  on the primal side. so there is no explicit control on the storage and arithmetic costs.

	Popular first-order algorithms include
	the proximal gradient method~\cite{rockafellar1976monotone}, %\cite{Roc76:},
	accelerated variants~\cite{beck2009fast} %\cite{AT06:Interior-Gradient},
  and the alternating direction method of multipliers~\cite{gabay1975dual,glowinski1975approximation,boyd2011distributed,scs}.
	These methods provably solve the original convex formulation of~\cref{p}.
	But they %converge slowly in practice.
%	These algorithms
 all store the full primal matrix variable, so they are not storage-efficient.

	Recently, Friedlander \& Macedo~\cite{friedlander2016low} %\cite{FM15:Low-Rank-Spectral}
	have proposed
	a novel first-order method that is based on gauge duality, rather than Lagrangian duality.
	This approach converts an SDP into an eigenvalue optimization problem.
	The authors propose a mechanism for using a dual solution to construct a primal solution.
	This paper is similar in spirit to our approach,
	but it lacks an analysis of the accuracy of the primal solution.
	Moreover, it only applies to problems with a positive-definite objective, \ie$C\succ 0$.

	\subsection{Storage-Efficient First-Order Methods}

	Motivated by problems in signal processing and machine learning,
	a number of authors have revived the conditional gradient method (CGM)
	~\cite{frank1956algorithm,levitin1966constrained}.
	In particular, Hazan~\cite{hazan2008sparse} %\cite{Haz08:Sparse-Approximate}
	suggested using CGM for semidefinite programming. Clarkson~\cite{clarkson2010coresets} %\cite{Cla10:Coresets-Sparse}
	developed a new analysis,
	and Jaggi~\cite{jaggi2013revisiting}, showed how this algorithm applies to a wide range of interesting problems.

	The appeal of the CGM is that it computes an approximate solution to an SDP as a sum of rank-one updates;
	each rank-one update is obtained from an approximate eigenvector computation.
	In particular, after $t$ iterations, the iterate has rank at most $t$.
	This property has led to the exaggeration that CGM is a ``storage-efficient'' optimization method when
	terminated early enough.
	Unfortunately, CGM converges very slowly, so the iterates do not have controlled rank.
  The literature describes many heuristics for
  attempting to control the rank of the iterates~\cite{rao2013conditional,yurtsever2015scalable},
	but these methods all lack guarantees.

	Very recently, some of the authors of this paper~\cite{yurtsever2017sketchy} %\cite{YUTC17:Sketchy-Decisions}
	have shown how to use CGM to design a storage-optimal algorithm for a class of semidefinite programs
	by sketching the decision variable.
	This algorithm does not apply to standard-form SDPs, and it inherits the slow convergence of CGM.
	Nevertheless, the sketching methodology holds promise as a way to design storage optimal solvers,
	particularly together with algorithms that generalize CGM and that do apply to standard-form SDPs
	\cite{yurtsever2018conditional, yurtsever2019conditional}.

	We also mention a subgradient method developed by Renegar~\cite{renegar2014efficient}
	that can be used to solve either the primal or dual SDP.
	Renegar's method has a computational profile similar to CGM,
	and it does not have controlled storage costs.

	\subsection{Factorization Methods}\label{sec: factorizationMethods}

	There is also a large class of heuristic SDP algorithms based on matrix factorization.
	The key idea is to factorize the matrix variable $X = FF^\top, F\in \reals^{n\times r}$
	and to reformulate the SDP~\cref{p} as
	\begin{equation} \label{eqn:burer-monteiro}
	\ba{ll}
	\mbox{minimize} & \tr( CFF^\top ) \\
	\mbox{subject to} & \mathcal{A}(FF^\top) = b.
	\ea
	\end{equation}
	We can apply a wide range of nonlinear programming methods to optimize~\cref{eqn:burer-monteiro}
	with respect to the variable $F$.
%	This approach can be effective for certain choices of the constraint map $\mathcal{A}$,
%	But it is hard to extend to problems with general $\mathcal{A}$ \mnote{not sure what you mean}
%	or with inequality constraints.
	In contrast to the convex methods described above,
	these techniques only offer incomplete guarantees on storage, arithmetic, and convergence.

	The factorization idea originates in the paper~\cite{homer1997design} %\cite{HP97:Design-Performance}
	of Homer \& Peinado.
	They focused on the Max-Cut SDP, and the factor $F$ was a \emph{square} matrix, \ie $r=n$.
	These choices result in an unconstrained nonconvex optimization problem
	that can be tackled with a first-order optimization algorithm.
%	Burer et al.~\cite{BMZ02:Projected-Gradient} refined this idea.
	%, and they reported solutions for graphs with up to 7000 vertices.

	Theoretical work of Barvinok~\cite{barvinok1995problems} and Pataki~\cite{pataki1998rank}
	demonstrates that the primal SDP~\cref{p}
	always admits a solution with rank $r$, with $\binom{r + 1}{2} \leq m$.
	(Note, however, that the SDP can have solutions with much lower or higher rank.)
%	The existence of a low rank approximate solution is also assured by
%	\cite[Proposition 6.1]{barvinok2002course} and \cite{so2008unified}.

	Inspired by the existence of low rank solutions to SDP,
	Burer \& Monteiro~\cite{burer2003nonlinear}
	proposed to solve the optimization problem~\cref{eqn:burer-monteiro}
	where the variable $F \in \reals^{n \times p}$ is constrained to be a \emph{tall} matrix ($p \ll n$).
	The number $p$ is called the factorization rank.  It is clear that
	every rank-$r$ solution to the SDP~\cref{p} induces a solution
	to the factorized problem~\cref{eqn:burer-monteiro} when $p \geq r$.
	Burer \& Monteiro applied a limited-memory BFGS algorithm to solve~\cref{eqn:burer-monteiro}
	%, and they reported solutions for graphs with up to 20,000 vertices.
	in an explicit effort to reduce storage costs.

	In subsequent work, Burer \& Monteiro~\cite{burer2005local} %\cite{BM05:Local-Minima}
	proved that,
	under technical conditions, the local minima of
	the nonconvex formulation~\cref{eqn:burer-monteiro}
	are global minima of the SDP~\cref{p}, provided that the factorization rank
	$p$ satisfies $\binom{p+1}{2} \geq m + 1$.  As a consequence,
	algorithms based on~\cref{eqn:burer-monteiro} often set the factorization rank $p \approx \sqrt{2m}$,
	so the storage costs are $\Omega( n \sqrt{m} )$.

	Unfortunately, a recent result of Waldspurger \& Walters~\cite[Theorem~2 \& Remark~2]{waldspurger2018rank}
	demonstrates that the formulation~\cref{eqn:burer-monteiro} cannot
	lead to storage-optimal algorithms for interesting SDPs which are verified to be
	regular in \cite{LM2020regular}. In particular,
	suppose that the feasible set of~\cref{p} satisfies a mild technical
	condition and contains a matrix with rank \emph{one}.  Whenever the factorization
	rank satisfies $\binom{p+1}{2} + p \leq m$, there is a set of cost matrices
	$C$ with positive Lebesgue measure for which the factorized problem~\cref{eqn:burer-monteiro}
	has
	(1) a unique global optimizer with rank one and
	(2) at least one suboptimal local minimizer,
	while the original SDP has a unique primal and dual solution
	that satisfy strict complementarity.
	% proof by checking primal nondegeneracy for a MaxCut problem
	% and using the fact that primal nondegeneracy => dual uniqueness, from \cite{alizadeh1997complementarity}
	In this situation, the variable in the factorized SDP actually
	requires $\Omega(n \sqrt{m})$ storage, which is not optimal if $m = \omega(1)$.
	In view of this negative result, we omit a detailed review of the literature
	on the analysis of factorization methods.  See~\cite{waldspurger2018rank}
	for a full discussion.

	% \subsection{Summary and Contribution}
	%
	% In short, all extant algorithms for solving the SDP~\cref{p}
	% either lack the optimal storage guarantee, or they are heuristic, or both.
	% This paper presents a new algorithm
	% that provably solves the SDP~\cref{p} with optimal storage
	% under assumptions that are standard in the optimization literature.

	\section{Basics and Notation} \label{sec:basics}

	Here we introduce some additional notation,
	and metrics for evaluating the quality of a solution and
	the conditioning of an SDP.

	\subsection{Notation}

	We will work with the Frobenius norm $\fronorm{\cdot}$,
	the $\ell_2$ operator norm $\opnorm{\cdot}$,
	and its dual, the $\ell_2$ nuclear norm $\nucnorm{\cdot}$.
	We reserve the symbols $\|\cdot\|$ and $\twonorm{\cdot}$ for the norm
	induced by the canonical inner product of the
	underlying real vector space \footnote{For symmetric matrices, we regard the trace
	inner product as the canonical one. For the Cartesian product $\sym^\dm \times \reals^\cons$,
we regard the sum of trace inner product on $\sym^\dm$ and the dot product on $\reals^\cons$ as the canonical one.}.

	For a matrix $B\in\reals^{d_1\times d_2}$,
	we arrange its singular values in decreasing order:
	$$
	\sigma_{1}(B)\geq\dots \geq \sigma_{\min(d_1,d_2)}(B).
	$$
	Define $\sigma_{\min} (B)=\sigma_{\min(d_1,d_2)}(B)$ and $\sigma_{\max}(B) = \sigma_1(B)$.
	We also write $\sigma_{\min>0}(B)$ for the smallest \emph{nonzero} singular value of $B$.
	For a linear operator $\mathcal{B} : \sym^{d_1} \to \reals^{d_2}$, we define
	$$
	\sigma_{\min}(\mathcal{B}) = \min_{\|A\|=1}\|\mathcal{B}(A)\| \quad
	\text{and}\quad  \opnorm{\mathcal{B}} = \max_{\|A\|=1}\|\mathcal{B}(A)\|.
	$$

	We use analogous notation for the eigenvalues of a symmetric matrix.
	In particular, the map $\lambda_i(\cdot):\sym^{\dm}\rightarrow \reals$
	reports the $i$th largest eigenvalue of its argument.

	\subsection{Optimal Solutions}
	\label{sec:opt-solns}

	%\jatnote{I have reduced this section to a compact restatement of notation that is consistent with our assumptions.
	%Please bring the rest of the paper into alignment.  See the note at \Cref{sec:assumptions}.}

	Instate the notation and regularity assumptions from \Cref{sec:assumptions}.
	%Introduce the set $\mathcal{X}_{\star}$ of all solutions to the primal SDP~\cref{p},
	%and write $\xsol$ for a fixed, but arbitrary, solution.
	Define the slack operator $Z: \reals^\dm \to \sym^\dm$
	that maps a putative dual solution $y \in \reals^m$
	to its associated slack matrix $Z(y) := C - \mathcal{A}^\top y$. We omit the dependence on
	$y$ if it is clear from the context.
	% The solution $\ysol$ to the dual SDP~\cref{d} is assumed to be unique,
	% and let $\zsol := C - \mathcal{A}^\top\ysol$ be the optimal dual slack matrix.

	%Assuming uniqueness of the dual solution
	%and strict complementarity (\cref{eqn:complementary-slackness,eqn:strict-complement}),
	Let the rank of primal solution being $\rsol$ and denote its range as $\vspacex$. We also fix an orthonormal matrix
	$\vrepox \in \reals^{n \times \rsol}$ whose columns span $\vspacex$.
	Introduce the subspace $\uspacex = \range(\zsol)$,
	and let $\urepox \in \reals^{n \times (n-\rsol)}$ be a fixed orthonormal basis
	for $\uspacex$.  We have the decomposition $\vspacex + \uspacex = \reals^n$.

	For a matrix $V \in \reals^{n \times r}$, define the compressed
	cost matrix and constraint map
	\begin{equation} \label{eqn:reduced-constraint}
	C_V := V^\top C V
	\quad\text{and}\quad
	\mathcal{A}_V(S) := \mathcal{A}(VSV^\top)
	\quad\text{for $S \in \sym^r$.}
	\end{equation}
	In particular, $\mathcal{A}_{\vrepox}$ is the compression of the constraint map
	onto the range of $\xsol$.

%	We define $\xsol$ and $\ysol$ to be
%	the optimal solutions to the primal and dual SDP, Problems \cref{p} and
%	\cref{d}, respectively, and define $\zsol = C-\mathcal{A}^\top\ysol$.
%	We let $\mathcal{X}^\star$ and
%	$\mathcal{Y}^\star$ be the set of solutions of the primal SDP and dual SDP,
%	respectively.
%	We let $r^\star (\xsol)= \rank(\xsol)$ and
%	define $\vrepx\in\reals^{n	\times r^\star(\xsol)}$ to be
%	the matrix with orthonormal columns that span $\vspacex= \range(\xsol)$.
%	Similarly, we define $\urepx$ as the space $\uspacex
%	= \range(\zsol)$. We suppress the dependence on $\xsol,\zsol$ of
%	these quantities if $\xsol,\zsol$ are unique or
%	the dependence is clear from context.

	\subsection{Conditioning of the SDP}

	Our analysis depends on conditioning properties of the pair
	of primal~\cref{p} and dual~\cref{d} SDPs.

	First, we measure the strength of the complementarity
	condition~\cref{eqn:complementary-slackness} using the
	spectral gaps of the primal solution $\xsol$ and dual slack matrix $\zsol$:
	$$
	\lambda_{\min >0}(\xsol)
	\quad\text{and}\quad
	\lambda_{\min>0}(\zsol)
	$$
	These two numbers capture how far we can perturb the solutions
	before the complementarity condition fails.

	Second, we measure the robustness of the primal solution
	to perturbations of the problem data $b$ using the quantity
	\begin{equation} \label{eqn:kappa}
	\kappa := \frac{\sigma_{\max}(\mathcal{A})}{\sigma_{\min}(\mathcal{A}_{V^\star})}.
	\end{equation}
	This term arises because we have to understand the conditioning of the system
	$\mathcal{A}_{\vrepox}(S) = b$ of linear equations in the variable $S \in \sym^{\rsol}$.

	\begin{table}[t]{\footnotesize
			\caption{Quality of a primal matrix  $X\in \sym_+^n$ and a dual vector
				$y\in\reals^m$}\label{tb:
				primaldualapproxquality}}
		\begin{center}
			\begin{tabular}{|c| c| c|}
				\hline
				&		primal matrix $X$	 &  dual vector $y$\\
				\hline
				suboptimality $(\epsilon)$ &	$\tr(CX)-\pval$ &$ \dval
				- \bvec^\top y$\\
				\hline
				infeasibility $(\delta)$	&	$\max\{\norm{\Amap X-b},(-\lambda_{\min}(X))_+\}$  & $
				(-\lambda_{\min}(Z(y)))_+$\\
				\hline
				distance to solution $(d)$	&%$\min_{X^\star \in \mathcal{X}^\star
				%}
				$\fronorm{X-X^\star }$ &$
				\twonorm{y-\ysol}$\\
				\hline
			\end{tabular}
		\end{center}
	\end{table}

	\subsection{Quality of Solutions}\label{sec: qualityOfsolutions}

	We measure the quality of a primal matrix variable $X \in \sym_+^n$
	and a dual vector $y \in \reals_m$ in terms of their suboptimality,
	their infeasibility, and their distance to the true solutions.
	\Cref{tb: primaldualapproxquality} gives formulas for
	these quantities.

	We say that a matrix $X$ is an $(\epsilon,\delta)$-solution of~\cref{p} if its
	suboptimality $\epsilon_p(X)$ is at most $\epsilon$ and its infeasibility $\delta_p(X)$
	is at most $\delta$.

	The primal suboptimality $\epsilon_p(X)$ and infeasibility $\delta_p(X)$ are both
	controlled by the distance $d_p(X)$ to the primal solution:
	\begin{equation} \label{eqn:subopt-infeas-dist}
	\epsilon_p(X) \leq \fronorm{C}d_p(X) \quad \text{and} \quad
	\delta_p(X) \leq \max\{1, \opnorm{\Amap}\} d_p(X).
	\end{equation}
	We can also control the distance of a dual vector $y$ and its
	slack matrix $Z(y)$ from their optima using the following quadratic growth lemma.

		\begin{lemma}[Quadratic Growth]\label{lemma: qg}
		Instate the regularity assumptions from \Cref{sec:assumptions}.
%		Suppose the solution $\ysol$ of the dual SDP~\cref{d} is unique,
%		and assume that the solution $\xsol$ of the primal SDP~\cref{p} and the dual slack matrix $\zsol$
%		satisfy the strict complementarity conditions~\cref{eqn:complementary-slackness,eqn:strict-complement}.
		For any dual feasible $y$ with dual slack matrix
		$Z(y) := C-\mathcal{A}^\top y$ and dual suboptimality $\epsilon =\epsilon_d(y) = \dval-b^\top y$,
		we have
		\begin{equation}
			\begin{aligned} \label{eq: dqg}
				\|(Z(y),y)-(\zsol,\ysol)\| \leq\frac{1}{ \sigma_{\min}(\mathcal{D})}
				\biggr[\frac{\epsilon}{\lambda_{\min>0}(\xsol)}+
				\sqrt{\frac{2\epsilon}{\lambda_{\min>0}(\xsol)}\opnorm{Z(y)}}\biggr],
			\end{aligned}
		\end{equation}
		where the linear operator  $\mathcal{D}: \sym^n \times \reals^{m}
		\rightarrow \sym^{n} \times \sym^n$ is defined by
		$$
		\mathcal{D} (Z,y) := (Z- (\urepox \urepox^\top) Z (\urepox \urepox^\top), Z+\mathcal{A}^\top y).
		$$
		The orthonormal matrix $\urepox$ is defined in~\Cref{sec:opt-solns}. The quantity $\sigma_{\min}(\mathcal{D})$ is defined as
		$
		\sigma_{\min} (\mathcal{D})\defn  \min_{ \norm{(Z,y)}=1}
		\norm{(Z- (\urepox \urepox^\top) Z (\urepox \urepox^\top),Z+\mathcal{A}^\top y)}.
		$
	\end{lemma}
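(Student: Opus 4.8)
The plan is to derive the bound in two stages: first control the "complementarity defect" $\|Z(y)\,\xsol\|$ (or an equivalent trace quantity) by the dual suboptimality $\epsilon$, then convert that defect into a bound on the distance $\|(Z(y),y)-(\zsol,\ysol)\|$ using the linear operator $\mathcal{D}$ and its smallest singular value $\sigma_{\min}(\mathcal{D})$. For the first stage, I would start from the identity that for any dual feasible $y$,
\[
\tr(Z(y)\xsol) = \tr(C\xsol) - \tr((\mathcal{A}^\top y)\xsol) = \pval - b^\top y = \epsilon,
\]
where I used $\mathcal{A}\xsol = b$ and strong duality $\pval = \dval$. Since $Z(y)\succeq 0$ and $\xsol\succeq 0$, writing $\xsol = \sum_i \lambda_i(\xsol)\, v_i v_i^\top$ with the $v_i$ spanning $\vspacex$, we get $\epsilon = \sum_i \lambda_i(\xsol)\, v_i^\top Z(y) v_i \geq \lambda_{\min>0}(\xsol)\sum_i v_i^\top Z(y) v_i = \lambda_{\min>0}(\xsol)\,\tr(\vrepox^\top Z(y)\vrepox)$. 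Hence $\tr(\vrepox^\top Z(y)\vrepox) \le \epsilon/\lambda_{\min>0}(\xsol)$, which says the restriction of $Z(y)$ to the primal range is small in trace. From here, using $0\preceq \vrepox^\top Z(y)\vrepox$ one bounds $\fronorm{\vrepox^\top Z(y)\vrepox}^2 \le \opnorm{Z(y)}\,\tr(\vrepox^\top Z(y)\vrepox)$, and then the off-diagonal block $\fronorm{\vrepox^\top Z(y)\urepox}$ is controlled by a Schur-complement / positive-semidefiniteness argument (for a PSD block matrix, the off-diagonal Frobenius norm is bounded by the geometric mean of the diagonal blocks' relevant norms), giving a term of order $\sqrt{(\epsilon/\lambda_{\min>0}(\xsol))\,\opnorm{Z(y)}}$. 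Combining, the part of $Z(y)$ that "sees" $\vspacex$ — precisely $Z(y) - (\urepox\urepox^\top)Z(y)(\urepox\urepox^\top)$, since $\urepox\urepox^\top$ is the projector onto $\uspacex = \vspacex^\perp$ — has norm at most $\epsilon/\lambda_{\min>0}(\xsol) + \sqrt{2\epsilon\,\opnorm{Z(y)}/\lambda_{\min>0}(\xsol)}$ (the factor $2$ and the exact split absorb the diagonal and off-diagonal contributions).

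For the second stage, observe that $\mathcal{D}(\zsol,\ysol) = (\zsol - (\urepox\urepox^\top)\zsol(\urepox\urepox^\top),\ \zsol + \mathcal{A}^\top\ysol) = (0, C)$, because $\range(\zsol) = \uspacex$ forces $\zsol = (\urepox\urepox^\top)\zsol(\urepox\urepox^\top)$, and by definition $\zsol + \mathcal{A}^\top\ysol = C$. Likewise $\mathcal{D}(Z(y),y) = (Z(y) - (\urepox\urepox^\top)Z(y)(\urepox\urepox^\top),\ C)$ since $Z(y) + \mathcal{A}^\top y = C$ for any $y$. Therefore
\[
\mathcal{D}\big((Z(y),y) - (\zsol,\ysol)\big) = \big(Z(y) - (\urepox\urepox^\top)Z(y)(\urepox\urepox^\top),\ 0\big),
\]
whose norm is exactly the quantity bounded in the first stage. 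Now I need to know that $\mathcal{D}$ is injective on the relevant space so that $\sigma_{\min}(\mathcal{D}) > 0$ and
\[
\|(Z(y),y) - (\zsol,\ysol)\| \le \frac{1}{\sigma_{\min}(\mathcal{D})}\,\big\|\mathcal{D}\big((Z(y),y) - (\zsol,\ysol)\big)\big\|;
\]
injectivity follows because $\mathcal{D}(Z,y) = 0$ forces $Z = (\urepox\urepox^\top)Z(\urepox\urepox^\top)$ (so $\range(Z)\subseteq\uspacex$) and $Z = -\mathcal{A}^\top y$, and then using $\mathcal{A}$ having rank $m$ together with strict complementarity — which guarantees $\vspacex\oplus\uspacex = \reals^n$ and that $\mathcal{A}^\top$ restricted appropriately is injective — one concludes $y = 0$, $Z = 0$. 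Chaining the two stages gives exactly \cref{eq: dqg}.

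The main obstacle I anticipate is the careful bookkeeping in stage one: getting the precise constants (the factor $2$ under the square root, and the clean separation into a linear-in-$\epsilon$ term plus a square-root term) requires handling the $2\times 2$ block decomposition of $Z(y)$ with respect to $\vspacex\oplus\uspacex$ and bounding the off-diagonal block via positive semidefiniteness rather than a crude triangle inequality — a naive bound would lose a constant or produce the wrong exponent structure. A secondary subtlety is that the lemma's statement uses $\opnorm{Z(y)}$, the operator norm of the \emph{perturbed} slack matrix rather than of $\zsol$; this is actually convenient (no need to relate $\opnorm{Z(y)}$ back to $\opnorm{\zsol}$) but one must make sure every appearance of the operator norm in the PSD estimates is indeed $\opnorm{Z(y)}$ and applied to the correct (sub)block. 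Everything else — the trace identity, the computation of $\mathcal{D}$ on the optimal pair, and the injectivity argument — is routine given strong duality, complementary slackness, strict complementarity, and $\rank(\mathcal{A}) = m$.
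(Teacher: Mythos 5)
Your overall route coincides with the paper's own proof: the trace identity $\tr(Z(y)\xsol)=\epsilon$ from strong duality and primal feasibility, the block decomposition of $Z(y)$ with respect to $\vspacex\oplus\uspacex$ (trace bound on the part touching $\vspacex$, a PSD Schur-type bound on the off-diagonal blocks), and the observation that $\mathcal{D}\bigl((Z(y),y)-(\zsol,\ysol)\bigr)=\bigl(Z(y)-\urepox\urepox^\top Z(y)\urepox\urepox^\top,\,0\bigr)$, after which the definition of $\sigma_{\min}(\mathcal{D})$ yields the claim. Your first stage is exactly \cref{lem: MainProjectionDistanceInnerProduct} applied with $X=Z(y)$ and $Z=\xsol$ (so the threshold is $\lambda_{\min>0}(\xsol)$ and the projector is $\urepox\urepox^\top$). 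One cosmetic correction: to land the stated constants, bound the diagonal block by its trace, $\fronorm{\vrepox^\top Z(y)\vrepox}\le\tr(\vrepox^\top Z(y)\vrepox)\le \epsilon/\lambda_{\min>0}(\xsol)$, rather than by $\sqrt{\opnorm{Z(y)}\tr(\cdot)}$; the factor $2$ under the square root then comes solely from the two off-diagonal blocks.

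The genuine gap is your justification of $\ker(\mathcal{D})=\{0\}$ (equivalently $\sigma_{\min}(\mathcal{D})>0$, without which the bound is vacuous). You attribute it to $\rank(\mathcal{A})=m$ plus strict complementarity, but these do not suffice: injectivity of $\mathcal{A}^\top$ says nothing about whether some nonzero $y$ could have $\mathcal{A}^\top y$ entirely supported on $\uspacex$ (all blocks involving $\vspacex$ zero), and any such $y$ would give a nonzero kernel element $(-\mathcal{A}^\top y,\,y)$. The missing ingredient is dual uniqueness together with optimality of $\ysol$, as in \cref{lem: dualuniquenessmaplemma}: if $(\hat Z,\hat y)\in\ker(\mathcal{D})$ with $\hat y\neq 0$, then $Z(\ysol+\gamma\hat y)=\zsol+\gamma\hat Z$ with $\hat Z$ supported on $\uspacex$, and since strict complementarity gives $\urepox^\top\zsol\urepox\succ 0$, this matrix remains PSD for all small $|\gamma|$; hence $\ysol+\gamma\hat y$ is dual feasible, and choosing the sign of $\gamma$ according to $b^\top\hat y$ either strictly improves on $\ysol$ (contradicting optimality) or produces a second optimal dual solution (contradicting uniqueness). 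As written, your kernel argument would fail, while everything else in the proposal is sound and mirrors the paper.
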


	The proof of~\cref{lemma: qg} can be found in~\Cref{sec: lemmaOfIntroduction}.
	The name \emph{quadratic growth} arises from a limit of inequality \cref{eq: dqg}:
	when $\epsilon$ is small,
	the second term in the bracket dominates the first term,
	so $\twonorm{y-\ysol}^2 = \bigO{\epsilon}$~\cite{drusvyatskiy2018error}.

	\section{Reduced SDPs and Approximate Complementarity}\label{sec: oracleinequalities}

%	From the introduction, we know the nullspace of $\zsol$ identifies the range of
%	$\xsol$.
%	Using a basis $\vrepox$ for this nullspace,
%	we have $\xsol = \vrepox S^{\star}(\vrepox)^\top$,
%	where $S^\star$ is a solution of the reduced SDP \cref{pp}.
%	However, using almost any perturbation $V$ of $\vrepox$ in the reduced SDP
%	\cref{pp}
%	will make the problem infeasible.
%	Recall $\Av{V}(S)= \Amap(VSV^\top)$ for all $S\in \sym^r$.

	In this section, we describe two reduced SDP formulations,
	and we explain when their solutions are nearly optimal
	for the original SDP~\cref{p}.  We can interpret these results as
	constructive proofs of the approximate complementarity principle.

	\subsection{Reduced SDPs}
	\label{sec:reduced-sdps}

	Suppose that we have obtained a dual approximate solution $y$
	and its associated dual slack matrix $Z(y) := C - \mathcal{A}^\top y$.
	Let $r$ be a rank parameter, which we will discuss later.
	Construct an orthonormal matrix $V \in \reals^{n \times r}$
	whose range is an $r$-dimensional invariant subspace associated with the
	$r$ smallest eigenvalues of the dual slack matrix $Z(y)$.
	Our goal is to compute a matrix $X$ with range $V$
	that approximately solves the primal SDP~\cref{p}.

	Our first approach minimizes infeasibility over all psd matrices with range $V$:
	\beq\tag{\minfeas}
	\ba{ll}
	\mbox{minimize} &\frac{1}{2}\norm{\Av{V} (S ) -  b}^2\\
	\mbox{subject to} & S\succeq 0,
	\ea
	\eeq
	with variable $S \in \sym^r$.  Given a solution $\hat{S}$, % \in \sym^r$,
	we can form an approximate solution $\infeasx=V\hat{S}V^\top$ for the primal SDP~\cref{p}.
	This is the same method from~\Cref{sec:approx-comp-intro}.

	Our second approach minimizes the objective value over all psd matrices with range $V$,
	subject to a specified limit $\delta$ on infeasibility:
	\beq \label{minobj} \tag{\minobj}
	\ba{ll}
	\mbox{minimize} & \tr( C_V S)\\
	\mbox{subject to} & \|\Av{V}(S) -  b\|\leq \delta
	\quad\text{and}\quad S\succeq 0, \\
	\ea
	\eeq
	with variable $S \in \sym^r$.
	Given a solution $\tilde{S}$, we can form an approximate solution
	$\objx = V\tilde{S}V^\top$ for the primal SDP~\cref{p}.

	As we will see, both approaches lead to satisfactory solutions to the original SDP~\cref{p}
	under appropriate assumptions.
	\Cref{thm: minfeas} addresses the performance of~\cref{minfeas},
	while \cref{thm: minobj} addresses the performance of~\cref{minobj}.
	\Cref{tb: comparison} summarizes the hypotheses
	we impose to study each of the two problems, as well as the outcomes of the analysis.

	The bounds in this section depend on the problem data
	and rely on assumptions that are not easy to check.
    We discuss how to check the quality of $\infeasx$ and $\objx$.
    in \Cref{sec: algorithmguarantee}.

	\jatnote{Warning: I did not check *any* of the math this time.  The proofs could
	probably be smoothed down, and the typesetting needs more work.  I would also
	add a bit more discussion about what the theorems say, in the locations that I indicated.}

	\subsection{Analysis of \cref{minfeas}}\label{sec: minfeas}

	First, we establish a result that connects the solution
	of \cref{minfeas} with the solution of the original problem~\cref{p}.

%	\subsection{\minfeas and Main Theorem I}\label{sec: minfeas}
%	We prove the following theorem which bounds the distance to the
%	solution set of $\infeasx$ in terms of the suboptimality of an approximate dual
%	vector $y$.

% Define $\kappa	=\frac{\sigma_{\max}(\mathcal{A})}{\sigma_{\min}(\mathcal{A}_{V^\star})}$
%	which measures the conditioning of \cref{minfeas}.

	\begin{theorem}[Analysis of \cref{minfeas}]\label{thm: minfeas}
		Instate the regularity assumptions in \Cref{sec:assumptions}.
		%		Suppose the primal and dual solution pair $(\xsol, \ysol)$
		%		to problems \cref{p}	and \cref{d} is unique
		%		and strict complementarity holds.
		Moreover, assume
		the solution rank $\rsol$ is known. Set $r = \rsol$.
		Let $y \in \reals^m$ be feasible for the dual SDP~\cref{d}
		with suboptimality $\epsilon = \epsilon_d(y) = \dval- b^\top y <  c_1$,
		where the constant $c_1 > 0$ depends only on $\Amap,\bvec$ and $\objc$.
		Then the threshold $T := \lambda_{n-r}(Z(y))$ obeys
		$$
		T := \lambda_{n-r}(Z(y))\geq \frac{1}{2}\lambda_{n-r}(\zsol)>0,
		$$
		and we have the bound
		\begin{align}\label{distanceboundint0}
			\fronorm{\infeasx  - \xsol} \leq (1+ 2\kappa)\bigg( \frac{\epsilon}{T} +
			\sqrt{2\frac{\epsilon}{T} \opnorm{\xsol}}\bigg).
		\end{align}
	\end{theorem}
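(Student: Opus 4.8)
The plan is to control $\fronorm{\infeasx - \xsol}$ by combining three ingredients: (1) the fact that the true solution $\xsol$ has range inside $\nullspace(\zsol)$, which is $\epsilon$-close (via \cref{lemma: qg}) to $\range(V)$, the span of the $r$ smallest eigenvectors of $Z(y)$; (2) a bound on how much infeasibility the projection $\Pi_V \xsol \Pi_V$ of $\xsol$ onto $\range(V)$ incurs; and (3) the optimality of $\hat S$ for \cref{minfeas} together with a restricted-injectivity (quadratic-growth-type) bound for the map $\Av{V}$ near $\xsol$. The threshold claim $T = \lambda_{n-r}(Z(y)) \geq \tfrac12\lambda_{n-r}(\zsol)$ is the first thing to establish: by Weyl's inequality $|\lambda_{n-r}(Z(y)) - \lambda_{n-r}(\zsol)| \leq \opnorm{Z(y) - \zsol}$, and the right-hand side is $\bigO{\sqrt\epsilon}$ by \cref{lemma: qg}; choosing $c_1$ small enough (depending on $\lambda_{\min>0}(\zsol)$, hence on $\Amap, \bvec, \objc$) forces the perturbation below $\tfrac12\lambda_{n-r}(\zsol)$. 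Note also $\lambda_{n-r}(\zsol) = \lambda_{\min>0}(\zsol) > 0$ since $\rank(\zsol) = n - \rsol$ under strict complementarity.

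Next I would quantify the distance between $\range(V)$ and $\vspacex = \range(\xsol)$. Because $\vspacex \subset \nullspace(\zsol)$ and $\dim\vspacex = \rsol = r$, a Davis–Kahan / $\sin\Theta$ argument applied to the spectral gap of $Z(y)$ at level $T$ gives $\fronorm{(I - VV^\top)\Pi_{\vspacex}} \lesssim \opnorm{Z(y) - \zsol}/T$. Using this, $\fronorm{\xsol - VV^\top \xsol VV^\top} \lesssim (\opnorm{Z(y)-\zsol}/T)\opnorm{\xsol}$; actually the cleaner route, and I suspect the one the paper takes, is to bound $\tr(\xsol) - \tr(VV^\top\xsol)$ directly: since $\xsol \succeq 0$, $\tr((I-VV^\top)\xsol) = \sum_i \lambda_i \langle v_i', \xsol v_i'\rangle$ over eigenvectors $v_i'$ of $Z(y)$ outside $\range(V)$, each of which has $Z(y)$-eigenvalue $\geq T$, and $\langle \xsol, Z(y)\rangle = \langle \xsol, Z(y) - \zsol\rangle \leq \fronorm{\xsol}\opnorm{Z(y)-\zsol}$ because $\langle \xsol,\zsol\rangle = 0$ by complementary slackness. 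This yields $\tr((I-VV^\top)\xsol) \leq \fronorm{\xsol}\opnorm{Z(y)-\zsol}/T$, and combined with \cref{lemma: qg} one gets $\tr((I - VV^\top)\xsol) \leq \tfrac{\epsilon}{T} + \sqrt{2\tfrac{\epsilon}{T}\opnorm{\xsol}}$ or similar, after absorbing constants. Since $\xsol \succeq 0$, control of $\tr((I-VV^\top)\xsol)$ controls $\fronorm{\xsol - VV^\top\xsol VV^\top}$ (up to a factor bounded by $\opnorm{\xsol}^{1/2}$-type quantities, or directly because $\fronorm{(I-P)X(I-P)} + \fronorm{(I-P)XP}$-type terms are dominated by $\tr((I-P)X)$ for psd $X$).

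Finally I would invoke optimality of $\hat S$: writing $S_0 := V^\top \xsol V \succeq 0$ (feasible for \cref{minfeas}), we have $\tfrac12\norm{\Av{V}\hat S - b}^2 \leq \tfrac12\norm{\Av{V} S_0 - b}^2 = \tfrac12\norm{\Amap(VV^\top\xsol VV^\top) - \Amap\xsol}^2 \leq \tfrac12\opnorm{\Amap}^2\fronorm{\xsol - VV^\top\xsol VV^\top}^2$, which is already $\bigO{\epsilon}$. Then $\norm{\Av{V}(\hat S - S_0)} \leq \norm{\Av{V}\hat S - b} + \norm{\Av{V}S_0 - b}$ is $\bigO{\sqrt\epsilon}$, and a lower bound $\sigma_{\min}(\Av{V})\fronorm{\hat S - S_0} \leq \norm{\Av{V}(\hat S - S_0)}$ — valid once we know $\Av{V}$ is injective, which follows for $\epsilon < c_1$ because $\sigma_{\min}(\Av{\vspacex}) > 0$ by the Barvinok–Pataki/regularity setup and $\Av{V}$ is a perturbation of $\Av{\vspacex}$ — converts this into $\fronorm{\hat S - S_0} = \bigO{\sqrt\epsilon}$, with the constant producing the $\kappa = \sigma_{\max}(\Amap)/\sigma_{\min}(\Av{\vspacex})$ dependence. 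Lifting back, $\fronorm{\infeasx - \xsol} \leq \fronorm{V(\hat S - S_0)V^\top} + \fronorm{VV^\top\xsol VV^\top - \xsol}$, and assembling the two $\bigO{\sqrt\epsilon}$ pieces with careful bookkeeping of constants should give exactly $(1 + 2\kappa)\big(\tfrac{\epsilon}{T} + \sqrt{2\tfrac{\epsilon}{T}\opnorm{\xsol}}\big)$.

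The main obstacle is the bookkeeping needed to get the \emph{clean} constant $(1+2\kappa)$ and the exact form $\tfrac{\epsilon}{T} + \sqrt{2\tfrac{\epsilon}{T}\opnorm{\xsol}}$ rather than a loose $\bigO{\cdot}$: this requires tracking \cref{lemma: qg} with its precise $\opnorm{Z(y)}$ versus $\opnorm{\xsol}$ factors, making sure the Davis–Kahan step and the projection-trace step do not each spawn their own stray constants, and arranging that the perturbation of $\sigma_{\min}(\Av{V})$ away from $\sigma_{\min}(\Av{\vspacex})$ is exactly what is hidden in passing from $\kappa$ to $2\kappa$ (presumably a factor-of-two slack in $\sigma_{\min}(\Av V) \geq \tfrac12\sigma_{\min}(\Av{\vspacex})$ for $\epsilon$ small). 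A secondary subtlety is that $\opnorm{Z(y)}$ appearing in \cref{lemma: qg} must itself be bounded in terms of problem data (it is $\leq \opnorm{\zsol} + \bigO{\sqrt\epsilon} \leq 2\opnorm{\zsol}$ for $\epsilon<c_1$), so that $c_1$ can legitimately be said to depend only on $\Amap, \bvec, \objc$.
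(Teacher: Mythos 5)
Your overall architecture matches the paper's (project $\xsol$ onto $\range(V)$, compare $\hat S$ with the feasible point $V^\top\xsol V$ using optimality in \cref{minfeas}, and certify $T>0$, $\sigma_{\min}(\Av{V})>0$ via \cref{lemma: qg}, Weyl, and Davis--Kahan), but there is a genuine gap in the central projection step. You bound $\langle \xsol, Z(y)\rangle = \langle\xsol, Z(y)-\zsol\rangle \leq \fronorm{\xsol}\,\opnorm{Z(y)-\zsol}$ and then invoke \cref{lemma: qg}, which only gives $\opnorm{Z(y)-\zsol} = \mathcal{O}(\sqrt{\epsilon})$; so your control of $\tr\bigl((I-VV^\top)\xsol\bigr)$ is $\mathcal{O}(\sqrt{\epsilon})$, not $\mathcal{O}(\epsilon)$. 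The paper instead uses the exact identity $\tr(\xsol Z(y)) = \tr(C\xsol) - y^\top \Amap\xsol = \dval - b^\top y = \epsilon$, which follows from strong duality and primal feasibility alone (no complementary slackness, no Davis--Kahan needed at this stage); this is \cref{lemma: minfeas0} via \cref{lem: MainProjectionDistanceInnerProduct}. The loss matters because the off-diagonal block is \emph{not} "dominated by $\tr((I-P)X)$ for psd $X$," as your parenthetical asserts: the correct bound is $\fronorm{(I-P)XP} \lesssim \sqrt{\opnorm{X}\,\tr((I-P)X)}$ (a rank-one example with a slightly tilted range shows the linear-in-trace claim fails). Feeding your $\mathcal{O}(\sqrt{\epsilon})$ trace bound through that square root yields only $\fronorm{\xsol - VV^\top\xsol VV^\top} = \mathcal{O}(\epsilon^{1/4})$, so the theorem's rate $\epsilon/T + \sqrt{2(\epsilon/T)\opnorm{\xsol}}$ does not follow from your route; the jump to that exact form "after absorbing constants" is not justified.

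A secondary, constant-level issue: your comparison of $\hat S$ with $S_0 = V^\top\xsol V$ via the triangle inequality $\norm{\Av{V}(\hat S - S_0)} \leq \norm{\Av{V}\hat S - b} + \norm{\Av{V}S_0 - b} \leq 2\norm{\Av{V}S_0 - b}$ produces $\fronorm{\hat S - S_0} \leq 2\,\frac{\sigma_{\max}(\Amap)}{\sigma_{\min}(\Av{V})}\fronorm{\xsol - P_V(\xsol)}$, which after $\sigma_{\min}(\Av{V}) \geq \tfrac12\sigma_{\min}(\Amap_{V_\star})$ gives a prefactor $1+4\kappa$, not the stated $1+2\kappa$. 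The paper avoids the extra factor of two by using the $\sigma_{\min}^2(\Av{V})$-strong convexity of $S \mapsto \tfrac12\norm{\Av{V}(S)-b}^2$ together with first-order optimality of $\hat S$ and $f(\hat S)\geq 0$ (\cref{lemma: minfeas1}). Your treatment of the threshold and of $\sigma_{\min}(\Av{V})$ (Weyl plus quadratic growth, Procrustes/Davis--Kahan perturbation of $\Amap_{V_\star}$, injectivity from uniqueness of $\xsol$) is essentially the paper's \cref{lemma: minfeas2} and is fine.
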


	\jatnote{*This* is where we discuss the result in as much detail as you want.}

	This bound shows that
	$\fronorm{\infeasx  - \xsol }^2 = \bigO{\epsilon}$
	when the dual vector $y$ is $\epsilon$ suboptimal.
	Notice this result requires knowledge of the solution rank $\rsol$.
	The proof of~\cref{thm: minfeas} occupies the rest of this section.

	% This should be expected from the quadratic growth lemma,~\cref{lemma: qg}

%	It is also possible to bound the primal distance to the solution
%	in terms of the dual distance to solution,
%	which yields $\fronorm{\infeasx - \xsol } =\bigO{\twonorm{y-\ysol}}$.
%	This form of the bound is interesting but less useful since
%	it is easier to estimate the suboptimality of $y$ than
%	the distance of $y$ to the dual solution.
	% We don't do this

	\subsubsection{Primal Optimizers and the Reduced Search Space}

	The first step in the argument is to prove that $\xsol$ is near the search space
	$\{VSV^\top :  S\in \sym^{r}_+\}$ of the reduced problems. %\cref{minfeas}.
	\begin{lemma}\label{lemma: minfeas0}
	Instate the regularity assumptions in \Cref{sec:assumptions}.
    Further suppose $y \in \reals^m$ is feasible and $\epsilon$-suboptimal
		for the dual SDP \cref{d}, and construct the orthonormal matrix $V$
		as in \Cref{sec:reduced-sdps}.
		Assume that the threshold $T := \lambda_{n-r}(C-\mathcal{A}^\top y) > 0$. Define 
		$P_{V}(X)=VV^\top XVV^\top$, and $P_{V^\perp}(X)=X-P_V(X)$ for any $X\in \sym^\dm$.
		Then for	any solution $\xsol$ of the primal SDP \cref{p},
		$$
		\fronorm{P_{V^\perp}(\xsol)} \leq  \frac{\epsilon}{T} +
		\sqrt{2\frac{\epsilon}{T} \opnorm{\xsol}},\;\text{and}\;\nucnorm{P_{V^\perp}(\xsol)
	} \leq  \frac{\epsilon}{T} + 2\sqrt{r\frac{\epsilon}{T} \opnorm{\xsol}}.
		$$
	\end{lemma}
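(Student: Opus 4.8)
The plan is to exploit the fact that $\xsol$ is feasible for the primal and complementary with the \emph{true} dual slack $\zsol$, and that the approximate slack $Z(y)$ is close to $\zsol$ by the quadratic growth lemma. Write $Z := Z(y) = C - \Amap^\top y \succeq 0$ (it is psd since $y$ is dual feasible). Since $V$ collects eigenvectors of $Z$ for its $r$ smallest eigenvalues, the complementary subspace $\range(V)^\perp$ is spanned by eigenvectors of $Z$ whose eigenvalues are all at least $T = \lambda_{n-r}(Z)$. The key chain of inequalities I would run is: complementary slackness $\tr(\xsol \zsol) = 0$ gives $\tr(\xsol Z) = \tr(\xsol(Z - \zsol)) \le \fronorm{\xsol}_* \cdot \opnorm{Z - \zsol}$ — actually more carefully $\tr(\xsol Z) = \langle \xsol, Z - \zsol\rangle \le \nucnorm{\xsol}\opnorm{Z-\zsol}$, but I would rather bound it directly through the dual suboptimality. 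Indeed, by weak duality $\tr(CX_\star) - b^\top y = \tr(\xsol Z) + (\tr(CX_\star) - p_\star) + (p_\star - d_\star) + (d_\star - b^\top y)$; using strong duality and primal feasibility this collapses to $\tr(\xsol Z) = d_\star - b^\top y = \epsilon$ (this is the standard identity $\tr(\xsol(C - \Amap^\top y)) = \tr(C\xsol) - y^\top \Amap \xsol = p_\star - b^\top y = \epsilon$). So $\tr(\xsol Z) = \epsilon$ exactly.

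Next I would diagonalize: write $Z = \sum_i \lambda_i u_i u_i^\top$ with $\lambda_{n-r+1}\le\cdots\le\lambda_n$ the $r$ smallest and the rest $\ge T$. Then $\epsilon = \tr(\xsol Z) = \sum_i \lambda_i\, u_i^\top \xsol u_i \ge T \sum_{i : \lambda_i \ge T} u_i^\top \xsol u_i = T \cdot \tr(P_{V^\perp}^{(1)} \xsol)$ where $P_{V^\perp}^{(1)}$ is the projector onto the ``large eigenvalue'' part, i.e.\ the orthogonal complement of $\range(V)$. Hence $\tr((I - VV^\top)\xsol(I - VV^\top)) = \tr((I-VV^\top)\xsol) \le \epsilon/T$. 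Call this quantity $\eta := \fronorm{(I-VV^\top)\xsol^{1/2}}^2 \le \epsilon/T$. Now the task is to convert this ``one-sided'' smallness of $\xsol$ on $\range(V)^\perp$ into a bound on the two-sided residual $P_{V^\perp}(\xsol) = \xsol - VV^\top \xsol VV^\top$. Writing $Q = VV^\top$ and $Q^\perp = I - Q$, one has $\xsol - Q\xsol Q = Q^\perp \xsol Q^\perp + Q^\perp \xsol Q + Q\xsol Q^\perp$, so by the triangle inequality $\fronorm{P_{V^\perp}(\xsol)} \le \fronorm{Q^\perp \xsol Q^\perp} + 2\fronorm{Q^\perp \xsol Q}$. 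For the cross term, factor $\xsol = \xsol^{1/2}\xsol^{1/2}$ and use $\fronorm{Q^\perp \xsol Q} = \fronorm{Q^\perp \xsol^{1/2} \cdot \xsol^{1/2} Q} \le \fronorm{Q^\perp\xsol^{1/2}}\,\opnorm{\xsol^{1/2}Q} \le \sqrt{\eta}\cdot\sqrt{\opnorm{\xsol}}$; and $\fronorm{Q^\perp\xsol Q^\perp} \le \opnorm{Q^\perp\xsol^{1/2}}\fronorm{\xsol^{1/2}Q^\perp} \le \sqrt{\opnorm{\xsol}}\sqrt{\eta} $, or even just $\le \eta$ via $\fronorm{Q^\perp\xsol Q^\perp}\le\tr(Q^\perp\xsol Q^\perp)=\eta$. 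Combining, $\fronorm{P_{V^\perp}(\xsol)} \le \eta + 2\sqrt{\eta\opnorm{\xsol}} \le \epsilon/T + 2\sqrt{(\epsilon/T)\opnorm{\xsol}}$. Hmm — the stated bound has a single $\sqrt{2\,(\epsilon/T)\opnorm{\xsol}}$, so I would tighten: use $\fronorm{Q^\perp \xsol Q^\perp}\le \eta$ and keep only one cross term's worth by a smarter grouping, or bound $\fronorm{Q^\perp\xsol Q^\perp} + 2\fronorm{Q^\perp\xsol Q}$ against $\eta + 2\sqrt{\eta\opnorm{\xsol}}$ and then note $2\sqrt{\eta\opnorm{\xsol}} = \sqrt{4\eta\opnorm{\xsol}}$ — achieving exactly the $\sqrt{2\cdot}$ constant will require using $\eta\le\epsilon/T$ together with $\eta\le\opnorm{\xsol}r$ or a rank-aware split; this constant-chasing is where I'd be careful. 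For the nuclear-norm bound, the same decomposition gives $\nucnorm{P_{V^\perp}(\xsol)} \le \nucnorm{Q^\perp\xsol Q^\perp} + 2\nucnorm{Q^\perp\xsol Q} \le \eta + 2\sqrt{r}\,\fronorm{Q^\perp\xsol Q}$ (since $Q^\perp\xsol Q$ has rank $\le r$, as $\xsol$ has rank $\rsol = r$), and then $\fronorm{Q^\perp\xsol Q}\le\sqrt{\eta\opnorm{\xsol}}$ yields $\eta + 2\sqrt{r\eta\opnorm{\xsol}} \le \epsilon/T + 2\sqrt{r(\epsilon/T)\opnorm{\xsol}}$, matching the claim.

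The main obstacle is not conceptual but bookkeeping: getting the identity $\tr(\xsol Z) = \epsilon$ cleanly from strong duality and feasibility, and then matching the exact constants in the displayed bounds (the $\sqrt{2\cdot}$ versus a bare $\sqrt{\cdot}$, and the appearance of $\sqrt{r}$ only in the nuclear-norm term). I expect the constant $\sqrt 2$ comes from an AM–GM step bounding $\eta + 2\sqrt{\eta\opnorm{\xsol}}$ — e.g. writing the residual as a sum of a rank-$r$ piece and using $a+b \le \sqrt{2(a^2+b^2)}$ on an appropriately split pair — rather than from the crude triangle inequality above, so I would revisit that split once the structure is pinned down. Everything else (psd-ness of $Z$, the eigenvalue threshold $T>0$ which is assumed, operator/Frobenius/nuclear norm interplay, $\rank(Q^\perp\xsol Q)\le r$) is routine.
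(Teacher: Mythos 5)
Your route is essentially the paper's: the identity $\tr(\xsol Z(y))=\epsilon$ from strong duality and primal feasibility, the threshold argument giving $\tr\bigl((I-VV^\top)\xsol\bigr)\le \epsilon/T$, and a block decomposition of $\xsol-VV^\top\xsol VV^\top$ with a Cauchy--Schwarz-type bound on the cross block (the paper packages this second half as the standalone \cref{lem: MainProjectionDistanceInnerProduct}, proved in the eigenbasis $W=[U\,V]$ of $Z(y)$). Your nuclear-norm bound is complete and matches the statement. The gap is in the Frobenius bound: as written you only obtain $\epsilon/T+2\sqrt{(\epsilon/T)\opnorm{\xsol}}$, which is weaker than the claimed $\epsilon/T+\sqrt{2(\epsilon/T)\opnorm{\xsol}}$, and your guess that the missing $\sqrt{2}$ comes from an AM--GM step is not right. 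It comes from orthogonality rather than the triangle inequality: with $Q=VV^\top$ and $Q^\perp=I-Q$, the two cross blocks $Q^\perp\xsol Q$ and $Q\xsol Q^\perp$ are orthogonal in the trace inner product (since $QQ^\perp=0$), so $\fronorm{Q^\perp\xsol Q+Q\xsol Q^\perp}=\sqrt{2}\,\fronorm{Q^\perp\xsol Q}\le\sqrt{2(\epsilon/T)\opnorm{\xsol}}$; combining with $\fronorm{Q^\perp\xsol Q^\perp}\le\tr(Q^\perp\xsol Q^\perp)\le\epsilon/T$ gives exactly the stated bound. This is precisely what the paper does in coordinates: the residual in the $W$-basis has diagonal block $X_1=U^\top\xsol U$ and off-diagonal blocks $B=U^\top\xsol V$ and $B^\top$, the off-diagonal part has Frobenius norm $\sqrt{2}\,\fronorm{B}$, and $\fronorm{B}^2\le\tr(X_1)\,\opnorm{V^\top\xsol V}\le(\epsilon/T)\opnorm{\xsol}$, which the paper establishes via a Schur-complement/von Neumann argument (\cref{lemma: auxSecOracleinequalities}) and which your $\xsol^{1/2}$ factorization yields just as well and more elementarily.

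One small correction in the nuclear-norm half: $\rank(Q^\perp\xsol Q)\le r$ holds because $\rank(Q)=r$, not because $\rank(\xsol)=r$; the lemma does not assume $r=\rsol$ (that assumption enters only in \cref{thm: minfeas}), so you should not invoke it here. With that fix, and the orthogonality observation above in place of the crude triangle inequality on the two cross blocks, your argument reproduces the paper's proof, differing only in that you bound the cross block by factoring $\xsol=\xsol^{1/2}\xsol^{1/2}$ instead of invoking the auxiliary trace-inequality lemma.
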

To prove the lemma, we will utilize the following result (proved in \Cref{sec: lemmasForSectionOfOracleinequalities}) which bounds the distance
to subspaces via the inner product. This result might be of independent interest.
\begin{lemma}\label{lem: MainProjectionDistanceInnerProduct}
	Suppose $X,Z\in \sym^{\dm}$ are both positive semidefinite.
	Let $V\in \reals^{\dm\times r}$ be the matrices formed by the eigenvectors
	with the smallest $r$ eigenvalues of $Z$ . Let $\epsilon =\tr(XZ)$ and
	$P_V(X)=VV^\top XVV^\top$.
	If $T = \lambda_{n-r}(Z)>0$, then
	\[
	\fronorm{X-P_V(X) } \leq  \frac{\epsilon}{T} +
	\sqrt{2\frac{\epsilon}{T} \opnorm{X}},\;\text{and}\; \nucnorm{X-P_V(X)
	} \leq  \frac{\epsilon}{T} + 2\sqrt{r\frac{\epsilon}{T} \opnorm{X}}.
	\]
\end{lemma}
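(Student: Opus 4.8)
The plan is to pass to a basis adapted to the eigenspaces of $Z$ and then control two blocks of $X$. Let $U\in\reals^{\dm\times(\dm-r)}$ be an orthonormal basis for the span of the eigenvectors of $Z$ with its $\dm-r$ \emph{largest} eigenvalues, so that $[V\ U]$ is orthogonal. In this basis
\[
[V\ U]^\top X[V\ U]=\begin{bmatrix}X_{VV}&X_{VU}\\X_{VU}^\top&X_{UU}\end{bmatrix},
\qquad
[V\ U]^\top\bigl(X-P_V(X)\bigr)[V\ U]=\begin{bmatrix}0&X_{VU}\\X_{VU}^\top&X_{UU}\end{bmatrix},
\]
where $X_{VV}=V^\top XV$, $X_{UU}=U^\top XU$, $X_{VU}=V^\top XU$. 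Since $[V\ U]$ is orthogonal, both the Frobenius and the nuclear norm of $X-P_V(X)$ equal those of the right-hand block matrix, so it suffices to bound $X_{UU}$ and $X_{VU}$.

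First I would bound $X_{UU}$ using the inner product. Write $Z=U\Lambda_U U^\top+V\Lambda_V V^\top$ with $\Lambda_U\succeq T\cdot I$ and $\Lambda_V\succeq 0$; since $X\succeq0$ we have $X_{UU},X_{VV}\succeq0$, and
\[
\epsilon=\tr(XZ)=\tr(X_{UU}\Lambda_U)+\tr(X_{VV}\Lambda_V)\ \geq\ \tr(X_{UU}\Lambda_U)\ \geq\ T\cdot\tr(X_{UU}).
\]
Hence $\tr(X_{UU})\leq\epsilon/T$, and because $X_{UU}\succeq0$ this gives $\nucnorm{X_{UU}}=\tr(X_{UU})\leq\epsilon/T$ and $\fronorm{X_{UU}}\leq\tr(X_{UU})\leq\epsilon/T$.

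Next I would bound the off-diagonal block by exploiting $X\succeq0$ more carefully. For unit vectors $u,w$ the $2\times2$ principal submatrix of $[V\ U]^\top X[V\ U]$ on the corresponding coordinates is positive semidefinite, which yields the pointwise inequality $|u^\top X_{VU}w|^2\leq(u^\top X_{VV}u)(w^\top X_{UU}w)$. Diagonalize $X_{UU}=\sum_j\mu_j w_jw_j^\top$ with orthonormal $w_j$; for each $j$, applying the pointwise inequality with $w=w_j$ and $u$ the unit vector along $X_{VU}w_j$ gives $\|X_{VU}w_j\|^2\leq\opnorm{X_{VV}}\,\mu_j\leq\opnorm{X}\,\mu_j$. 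Summing over $j$,
\[
\fronorm{X_{VU}}^2=\sum_j\|X_{VU}w_j\|^2\ \leq\ \opnorm{X}\,\tr(X_{UU})\ \leq\ \opnorm{X}\,\frac{\epsilon}{T}.
\]
Finally I assemble the bounds. For Frobenius, $\fronorm{X-P_V(X)}^2=2\fronorm{X_{VU}}^2+\fronorm{X_{UU}}^2\leq 2\opnorm{X}\tfrac{\epsilon}{T}+(\tfrac{\epsilon}{T})^2$, and $\sqrt{a+b}\leq\sqrt a+\sqrt b$ gives the first claim. For the nuclear norm, split the diagonal block off: $\nucnorm{X-P_V(X)}\leq 2\nucnorm{X_{VU}}+\tr(X_{UU})$, using that the symmetric matrix with zero diagonal blocks and off-diagonal block $X_{VU}$ has eigenvalues $\pm\sigma_i(X_{VU})$; since $X_{VU}$ has rank at most $r$, $\nucnorm{X_{VU}}\leq\sqrt r\,\fronorm{X_{VU}}\leq\sqrt{r\opnorm{X}\epsilon/T}$, which yields the second claim.

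The one genuinely non-routine step is the off-diagonal bound $\fronorm{X_{VU}}^2\leq\opnorm{X}\tr(X_{UU})$: a naive block Cauchy–Schwarz produces $\tr(X_{VV})$ in place of $\opnorm{X}$ and is too weak, so the argument must use the pointwise Schur-complement inequality together with the per-eigenvector choice of test vector $u$. Everything else is bookkeeping with an orthogonal change of basis and elementary norm inequalities.
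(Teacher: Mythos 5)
Your proof is correct and follows essentially the same route as the paper: the same change of basis $[V\ U]$, the same trace bound $\tr(X_{UU})\leq \epsilon/T$ from $\tr(XZ)$, the same key off-diagonal estimate $\fronorm{X_{VU}}^2\leq \opnorm{X}\,\tr(X_{UU})$, and the same assembly for the Frobenius and nuclear norms. The only difference is that you derive the off-diagonal block bound by an elementary per-eigenvector Cauchy--Schwarz argument on $2\times 2$ compressions, whereas the paper proves the equivalent inequality $\opnorm{A}\tr(D)\geq \nucnorm{BB^\top}$ via a Schur complement and Von Neumann's trace inequality; both yield identical constants.
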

Now we are ready to prove \Cref{lemma: minfeas0}.
\begin{proof}[Proof of \Cref{lemma: minfeas0}]
	We shall utilize \Cref{lem: MainProjectionDistanceInnerProduct}. Simply set $Z$ in \Cref{lem: MainProjectionDistanceInnerProduct}
	to be $C-\Amap^\top y$ from the approximate dual solution $y$, and $X$ to be the primal solution $\xsol$. Using
	strong duality in the following step $(a)$ and feasibility of $\xsol$ in the following step $(b)$, we have
	\[
	\epsilon = b^\top \ysol -b^\top y \overset{(a)}{=} \tr(C\xsol)-b^\top y \overset{(b)}{=} \tr(C\xsol) - (\Amap \xsol)^\top y=
	\tr(\xsol Z).
	\]
	Hence, we can apply Lemma \ref{lem: MainProjectionDistanceInnerProduct} to obtain the bounds in \Cref{lemma: minfeas0}.
\end{proof}
	\subsubsection{Relationship between the Solutions of \cref{minfeas} and \cref{p}}

	\Cref{lemma: minfeas0} shows that any solution $\xsol$ of \cref{p}
	is close to its compression $VV^\top \xsol VV^\top$ onto the range of $V$.
	Next, we show that $\infeasx$ is also close to $VV^\top \xsol VV^\top$.
	We can invoke strong convexity of the objective of~\cref{minfeas}
	to achieve this goal.

%	Observe the objective of \cref{minfeas} is quadratic
%	with  strong convexity parameter $\sigma_{\min}(\mathcal{A}_V)$.
%	Hence, we can invoke strong convexity to achieve the goal.

%	Combining this observation with the \cref{lemma: minfeas0},
%	we can establish the next lemma.
	 %bound the distance between
%	$\infeasx$  and $VV^\top \xsol VV^\top$ using strong convexity.

	\begin{lemma}\label{lemma: minfeas1}
	Instate the assumptions and notation from \cref{lemma:
		minfeas0}.
	Assume $\sigma_{\min} (\mathcal{A}_V)>0$. and
		 that the threshold $T = \lambda_{n-r}(Z(y)) > 0$.
		Then
		\begin{align}
			\fronorm{\infeasx - \xsol } \leq \left(1+
			\frac{\sigma_{\max}(\mathcal{A})}{\sigma_{\min}(\mathcal{A}_V)}\right)
			\left(
			\frac{\epsilon}{T} + \sqrt{2\frac{\epsilon}{T}
				\opnorm{\xsol}}\right),\label{fineq}
		\end{align}
		where $\xsol$ is any solution of the primal SDP \cref{p}.
	\end{lemma}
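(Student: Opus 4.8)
The plan is to compare the reconstruction $\infeasx = V\hat S V^\top$ against the compression $P_V(\xsol) = V(V^\top\xsol V)V^\top$ of the primal optimizer, handling the leftover piece $P_{V^\perp}(\xsol)$ with \Cref{lemma: minfeas0}. Set $S_0 := V^\top\xsol V$; this matrix is positive semidefinite since $\xsol \succeq 0$, so $S_0$ is feasible for \cref{minfeas}, and $P_V(\xsol) = VS_0V^\top$. Because $V$ has orthonormal columns, $\fronorm{VMV^\top} = \fronorm{M}$ for every $M\in\sym^r$ (expand the trace and use $V^\top V = I_r$). Combining this with $P_{V^\perp} = \mathrm{Id} - P_V$ from the definition in \Cref{lemma: minfeas0}, the triangle inequality gives
\[
\fronorm{\infeasx - \xsol} \le \fronorm{V(\hat S - S_0)V^\top} + \fronorm{P_V(\xsol) - \xsol} = \fronorm{\hat S - S_0} + \fronorm{P_{V^\perp}(\xsol)},
\]
so it suffices to show $\fronorm{\hat S - S_0} \le \frac{\sigma_{\max}(\Amap)}{\sigma_{\min}(\Av{V})}\,\fronorm{P_{V^\perp}(\xsol)}$.

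For that I would use strong convexity of the objective $f(S) := \frac12\norm{\Av{V}(S) - b}^2$ of \cref{minfeas}. Its Hessian is the operator $\Av{V}^{*}\Av{V}$ on $\sym^r$, whose smallest eigenvalue equals $\min_{\norm{M}=1}\norm{\Av{V}(M)}^2 = \sigma_{\min}(\Av{V})^2 > 0$ under the hypothesis, so $f$ is $\sigma_{\min}(\Av{V})^2$-strongly convex. Since $\hat S$ minimizes $f$ over the convex cone $\{S\succeq 0\}$ and $S_0$ lies in that cone, the first-order optimality inequality $\langle \nabla f(\hat S), S_0 - \hat S\rangle \ge 0$ together with strong convexity yields $f(S_0) \ge f(\hat S) + \frac{\sigma_{\min}(\Av{V})^2}{2}\fronorm{S_0 - \hat S}^2$, hence
\[
\fronorm{\hat S - S_0}^2 \le \frac{2}{\sigma_{\min}(\Av{V})^2}\bigl(f(S_0) - f(\hat S)\bigr) \le \frac{2}{\sigma_{\min}(\Av{V})^2}\,f(S_0).
\]

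It remains to control $f(S_0)$. Because $\Av{V}(S_0) = \Amap(VS_0V^\top) = \Amap(P_V(\xsol))$ and $\Amap\xsol = b$ by primal feasibility, $f(S_0) = \frac12\norm{\Amap(P_V(\xsol) - \xsol)}^2 = \frac12\norm{\Amap(P_{V^\perp}(\xsol))}^2 \le \frac12\,\sigma_{\max}(\Amap)^2\,\fronorm{P_{V^\perp}(\xsol)}^2$. Substituting this into the previous display gives $\fronorm{\hat S - S_0} \le \frac{\sigma_{\max}(\Amap)}{\sigma_{\min}(\Av{V})}\fronorm{P_{V^\perp}(\xsol)}$, so with the triangle-inequality display,
\[
\fronorm{\infeasx - \xsol} \le \Bigl(1 + \frac{\sigma_{\max}(\Amap)}{\sigma_{\min}(\Av{V})}\Bigr)\fronorm{P_{V^\perp}(\xsol)}.
\]
Finally, \Cref{lemma: minfeas0} bounds $\fronorm{P_{V^\perp}(\xsol)} \le \frac{\epsilon}{T} + \sqrt{2\frac{\epsilon}{T}\opnorm{\xsol}}$, and plugging this in produces exactly \cref{fineq}. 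The one genuinely delicate step is the constrained strong-convexity estimate — confirming that the projected optimality condition over the psd cone can be invoked and that the modulus of strong convexity is precisely $\sigma_{\min}(\Av{V})^2$; the rest is bookkeeping with the orthonormal $V$, feasibility of $\xsol$, and the triangle inequality.
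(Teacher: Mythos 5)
Your proposal is correct and follows essentially the same route as the paper's proof: strong convexity of $f(S)=\frac12\norm{\Av{V}(S)-b}^2$ with modulus $\sigma_{\min}^2(\Av{V})$, the variational optimality condition at $\hat S$ against the feasible point $S_0=V^\top\xsol V$, the bound $f(S_0)\le \frac12\sigma_{\max}^2(\Amap)\fronorm{P_{V^\perp}(\xsol)}^2$ via $\Amap\xsol=b$, and then the triangle inequality plus \cref{lemma: minfeas0} and unitary invariance of the Frobenius norm. The only (minor) difference is that you state the projected optimality condition over the psd cone explicitly, which is slightly more careful than the paper's phrasing but mathematically identical.
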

	%\end{restatable}
	\begin{proof}
		Since we assume that $\sigma_{\min} (\Av{V})>0$,
		we know the objective of \cref{minfeas},
		$f(S)= \frac{1}{2}\twonorm{\Av{V}(S)-b}^2$, is
		$\sigma_{\min}^2(\mathcal{A}_V)$-strongly convex,
		and so the solution $\ssol$ is unique.
		We then have for any $S\in \sym^r$
		\begin{equation}
			\begin{aligned}
				f(S) - f(S^\star) & \overset{(a)}{\geq} \tr(\nabla f (S^\star)^\top ( S -
				S^\star)) + \frac{\sigma^2_{\min}(\Av{V})}{2}\fronorm{S-S^\star}^2\\
				&\overset{(b)}{\geq}
				\frac{\sigma_{\min}^2(\mathcal{A}_V)}{2}\fronorm{S-S^\star}^2,\label{eq:
					lemmaminfeas1e1}
			\end{aligned}
		\end{equation}
		where step $(a)$ uses strong convexity
		and step $(b)$ is due to the optimality of $\ssol$.

		Since $\mathcal{A}\xsol = b $, we can bound the objective of \cref{minfeas}
		by $\fronorm{ P_V(X) - \xsol}$:
		\begin{equation}
			\begin{aligned}
				\twonorm{ \Av{V}(V^\top XV) - b} &=  \twonorm{\Amap(P_V(X) - \xsol)}  
				&\leq
				\sigma_{\max}(\mathcal{A}) \fronorm{P_V(X) - \xsol}.\label{eq:
					lemmaminfeas1e2}
			\end{aligned}
		\end{equation}
		Combining pieces, we know that $\ssol$ satisfies
		\begin{align*}
			\fronorm{S^\star - V^\top \xsol V}^2
			&\ineqov{(a)}\frac{2}{\sigma_{\min}^2(\mathcal{A}_V)}( f(V^\top \xsol V)  -
			f(S^\star))
			\ineqov{(b)}\frac{\sigma^2_{\max}(\mathcal{A})}{\sigma_{\min}^2(\mathcal{A}_V)}
			\fronorm{\xsol - P_V(\xsol)}^2 \\
			&\ineqov{(c)}\frac{\sigma^2_{\max}(\mathcal{A})}{\sigma_{\min}^2(\mathcal{A}_V)}
			\bigg( \frac{\epsilon}{T} + \sqrt{2\frac{\epsilon}{T} \opnorm{\xsol}}\bigg)
			^2,
		\end{align*}
		where step $(a)$ uses \cref{eq: lemmaminfeas1e1},
		step $(b)$ uses \cref{eq: lemmaminfeas1e2} for $X=\xsol$ and $f(S^\star)\geq 0$,
		and step $(c)$ uses \cref{lemma: minfeas0}.
		Lifting to the larger space $\reals^{n \times n}$, we see
		\begin{align*}
			\fronorm{V S^\star V^\top - \xsol} & \leq \fronorm{VS^\star V^\top -P_V(\xsol)}+
			\fronorm{\xsol -P_V(X)} \\
			& \overset{(a)}{=} \fronorm{S^\star -V^\top \xsol V} + \fronorm{\xsol -P_V(\xsol)
			}\\
			& \ineqov{(b)} \left(1 +  \frac{
				\sigma_{\max}(\mathcal{A})}{\sigma_{\min}(\mathcal{A}_V)}\right) \bigg(
			\frac{\epsilon}{T} + \sqrt{2\frac{\epsilon}{T} \opnorm{\xsol}}\bigg) .
		\end{align*}
		Here we use the unitary invariance of $\fronorm{\cdot}$ in $(a)$.
		The inequality $(b)$ is due to our bound above for $\ssol$ and
		\cref{lemma: minfeas0}.
	\end{proof}

	\subsubsection{Lower Bounds for the Threshold and Minimum Singular Value}

	Finally, we must confirm that the extra hypotheses of \cref{lemma: minfeas1} hold,
	\ie $T > 0$ and $\sigma_{\min} (\mathcal{A}_V)>0$.

	We explain the intuition here.
	Strict complementarity forces $\lambda_{n-r}(\zsol) > 0$.
	If $Z$ is close to $\zsol$, then we expect that $T > 0$ by continuity.
	When $\xsol$ is unique, \Cref{thm: uniqueness} implies that $\nullspace(\mathcal{A}_\vrepox)=\{0\}$.
	As a consequence, $\sigma_{\min}(\mathcal{A}_{\vrepox})>0$.
	If $V$ is close to $\vrepox$, then we expect that $\sigma_{\min}(\mathcal{A}_V) > 0$
	as well.
	We have the following rigorous statement.

%	We find that
%	$\nullspace(\mathcal{A}_\vrepox)=\{0\}$ by \cref{thm: uniqueness} . This fact implies that $\sigma_{\min}(\mathcal{A}_{\vrepox})>0$.
%	Thus
%	if $V$ is indeed close to $\vrepox$, then we should also expect  $\sigma_{\min}
%	(\mathcal{A}_V)>0$.
%	We should expect to find $T>0$ when $Z$ is close to	$Z^\star$,
%	as strict complementarity implies $\lambda_{n-r}(Z^\star)>0$.
%	To control the distance of $V$ to $\vrepox$ and $Z$ to $Z^\star$
%	by the suboptimality of $y$, we use \cref{lemma: qg}.
%  This argument results in the following lemma.

	%\begin{restatable}{lemma}{boundsonsingularvalues}  \label{lemma: minfeas2}
	\begin{lemma}\label{lemma: minfeas2}
		Instate the hypotheses of~\cref{thm: minfeas}.  Then
%		Suppose the primal and dual solution pair $(\xsol, \ysol)$ to problems
%		\cref{p} and \cref{d} is unique and strict complementarity holds.
%		If $r= r^\star$,
%		$\epsilon =\dval - b^\top y <  c_1(\mathcal{A},b,C)$
%		(the same constant as \cref{thm: minfeas}),
%		and $y$ is feasible for the dual SDP \cref{d}, then
%		$$
		$$
		\begin{aligned}
		T =\lambda_{n-r}(Z(y)) &\geq \frac{1}{2}\lambda_{n-r}(\zsol); \\
%		\qquad \text{ and} \qquad
		\sigma_{\min} (\mathcal{A}_V) &\geq \frac{1}{2}
		\sigma_{\min}(\mathcal{A}_{V^\star})>0.
		\end{aligned}
		$$
	\end{lemma}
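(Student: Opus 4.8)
The plan is to prove the two displayed inequalities separately, in each case by comparing the approximate object ($Z(y)$, respectively $V$) with its exact counterpart ($\zsol$, respectively $\vrepox$), and then to take $c_1$ to be the minimum of the finitely many data-dependent thresholds that the two arguments demand.

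For the threshold bound, Weyl's inequality gives $\lambda_{n-r}(Z(y)) \geq \lambda_{n-r}(\zsol) - \opnorm{Z(y)-\zsol}$, and strict complementarity together with $r=\rsol$ forces $\rank(\zsol)=n-r$, hence $\lambda_{n-r}(\zsol)=\lambda_{\min>0}(\zsol)>0$. So it suffices to show $\opnorm{Z(y)-\zsol}\leq\frac12\lambda_{\min>0}(\zsol)$ once $\epsilon$ is small. I would obtain this from \cref{lemma: qg}, which bounds $\norm{(Z(y),y)-(\zsol,\ysol)}$ — and hence $\fronorm{Z(y)-\zsol}\geq\opnorm{Z(y)-\zsol}$ — by $\frac{1}{\sigma_{\min}(\mathcal D)}\bigl[\frac{\epsilon}{\lambda_{\min>0}(\xsol)}+\sqrt{\frac{2\epsilon}{\lambda_{\min>0}(\xsol)}\opnorm{Z(y)}}\bigr]$. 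The only wrinkle is that $\opnorm{Z(y)}$ appears on the right; I would remove it with a one-line bootstrap: provisionally assume $\fronorm{Z(y)-\zsol}\leq1$, so $\opnorm{Z(y)}\leq\opnorm{\zsol}+1$, substitute, observe the resulting bound is $O(\sqrt\epsilon)$, and note it drops below $\min\{1,\frac12\lambda_{\min>0}(\zsol)\}$ once $\epsilon$ is under a data-dependent constant, which closes the bootstrap and, with Weyl, finishes this half.

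For the singular-value bound, uniqueness of $\xsol$ and \cref{thm: uniqueness} give $\nullspace(\Av{\vrepox})=\{0\}$, hence $\sigma_{\min}(\Av{\vrepox})>0$, which is the ``$>0$'' claim. For the factor $\frac12$, the idea is that $\range(V)$ — the invariant subspace of $Z(y)$ for its $r$ smallest eigenvalues — is close to $\range(\vrepox)=\nullspace(\zsol)$ — the invariant subspace of $\zsol$ for its $r$ (zero) smallest eigenvalues. Since $y$ is dual feasible, $Z(y)\succeq0$; combining this with Weyl and $\opnorm{Z(y)-\zsol}=O(\sqrt\epsilon)$ from the first part, once $\epsilon$ is small the $r$ smallest eigenvalues of $Z(y)$ lie in $[0,\frac14\lambda_{\min>0}(\zsol)]$ while the rest are at least $\frac34\lambda_{\min>0}(\zsol)$, so there is a genuine gap and a Davis--Kahan $\sin\Theta$ estimate yields $\fronorm{VV^\top-\vrepox\vrepox^\top}=O\bigl(\fronorm{Z(y)-\zsol}/\lambda_{\min>0}(\zsol)\bigr)=O(\sqrt\epsilon)$. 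The standard Procrustes inequality then produces an orthogonal $Q\in\reals^{r\times r}$ with $\fronorm{V-\vrepox Q}$ bounded by a constant multiple of $\fronorm{VV^\top-\vrepox\vrepox^\top}$, hence $O(\sqrt\epsilon)$. Since $S\mapsto QSQ^\top$ is an isometry of $\sym^r$ we have $\sigma_{\min}(\Av{\vrepox Q})=\sigma_{\min}(\Av{\vrepox})$, so for any $S$ with $\norm{S}=1$, $\norm{\Av{V}(S)}\geq\norm{\Av{\vrepox Q}(S)}-\opnorm{\Amap}\fronorm{VSV^\top-\vrepox Q S Q^\top\vrepox^\top}\geq\sigma_{\min}(\Av{\vrepox})-2\opnorm{\Amap}\fronorm{V-\vrepox Q}$; minimizing over such $S$ and using $\fronorm{V-\vrepox Q}=O(\sqrt\epsilon)$ gives $\sigma_{\min}(\Av{V})\geq\frac12\sigma_{\min}(\Av{\vrepox})$ once $\epsilon$ is below a further data-dependent constant.

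All the thresholds above depend only on $\Amap$, $\bvec$, $\objc$ (through $\ysol,\zsol,\xsol,\vrepox$, which the data determines), so setting $c_1$ to their minimum completes the proof. I expect the main obstacle to be the subspace-perturbation step: one must verify that the spectral-gap hypothesis of Davis--Kahan genuinely survives the perturbation (this is where feasibility of $y$ is used, to pin the small eigenvalues of $Z(y)$ to a one-sided interval $[0,\cdot]$), track the constant through the Procrustes passage, and be careful that $\Av{\vrepox Q}$ really shares the least singular value of $\Av{\vrepox}$; by contrast the threshold inequality is essentially just Weyl plus \cref{lemma: qg}.
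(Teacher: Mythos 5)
Your proposal is correct and follows essentially the same route as the paper: Weyl's inequality combined with the quadratic growth bound of \cref{lemma: qg} (plus a self-bounding/bootstrap step to eliminate $\opnorm{Z(y)}$ from the right-hand side) for the threshold, and \cref{thm: uniqueness} together with a Davis--Kahan eigenspace perturbation, Procrustes alignment, and an $\opnorm{\Amap}$-perturbation of the compressed map for the singular-value bound. The only cosmetic difference is that the paper invokes the Yu--Wang--Samworth variant of Davis--Kahan, which packages the $\sin\Theta$ bound and the Procrustes alignment into a single estimate $\fronorm{E}\leq 4\fronorm{Z-\zsol}/\lambda_{\min>0}(\zsol)$, whereas you carry out the two steps separately.
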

	%\end{restatable}
	\begin{proof} We first prove the lower bound on the threshold $T$.
		Using
		$\|(Z,y)-(\zsol,\ysol)\|\geq \opnorm{Z-\zsol}\geq \opnorm{Z}-\opnorm{\zsol}$
		and quadratic growth (\cref{lemma: qg}), we have
		$$\opnorm{Z}-\opnorm{\zsol} \leq\frac{1}{ \sigma_{\min}(\mathcal{D})}
		\biggr(\frac{\epsilon}{\lambda_{\min>0}(\xsol)}+
		\sqrt{\frac{2\epsilon}{\lambda_{\min>0}(\xsol)}\opnorm{Z}}\biggr).   $$
		Thus for sufficiently small $\epsilon$,
		we have $\opnorm{Z} \leq 2\opnorm{\zsol}$.
		Substituting this bound into previous inequality gives,
		\begin{equation}\label{eq: lemma: minfeas2eq1}
			\begin{aligned}
				\|(Z,y)-(\zsol,\ysol)\|\leq\frac{1}{ \sigma_{\min}(\mathcal{D})}
				\biggr(\frac{\epsilon}{\lambda_{\min>0}(\xsol)}+
				\sqrt{\frac{4\epsilon}{\lambda_{\min>0}(\xsol)}\opnorm{\zsol}}\biggr).
			\end{aligned}
		\end{equation}
		Weyl's inequality tells us that
		$\lambda_{n-r} (\zsol)-T \leq \opnorm{Z-\zsol}$.
		Using \cref{eq: lemma: minfeas2eq1},
		we see that for all sufficiently small $\epsilon$,
		$T:=\lambda_{n-r}(C-\mathcal{A}^\top y)\geq \frac{1}{2}\lambda_{n-r}(\zsol).$

		Next we prove the lower bound on $\Av{V}$.  We have
		$\sigma_{\min}(\mathcal{A}_{V^\star})>0$  by \cref{thm: uniqueness}.
		It will be convenient to align the columns of $V$ with those of $V^\star$
		for our analysis.
		Consider the solution $O_\star$ to the orthogonal Procrustes problem
		$O_\star =	\argmin_{OO^\top =I, O\in \reals^{r\times r}}\fronorm{VO-V^\star}$.
		Since $\sigma_{\min}(\mathcal{A}_V) = \sigma_{\min}(\mathcal{A}_{VO_\star})$ for
		orthonormal
		$O_\star$, without loss of generality,
		we suppose we have already performed the alginment and $V$ is $VO_\star$ in the following.

		Let $S_1= \argmin_{\fronorm{S}=1}\twonorm{\Av{V}(S)}$. Then we have
		\beq
		\begin{aligned}\label{l2e1}
			\sigma_{\min}(\Av{\vrepox}) - \sigma_{\min} (\Av{V} )  &\leq
			\twonorm{\Av{\vrepox}(S_1)}-\twonorm{\Av{V}(S_1)} \\
			&\leq  \twonorm{\mathcal{A}(V^\star S_1 (V^\star)^\top )-\mathcal{A}(VS_1V^\top )
			}\\
			&\leq \opnorm{\Amap}\fronorm{ V^\star S_1 (V^\star)^\top -(VS_1V^\top )}.
		\end{aligned}
		\eeq
 	  Defining $E =V-  V^\star $, we bound the term $ \fronorm{V^\star S_1
			(V^\star)^\top -(VS_1V^\top )}$ as
		\beq
		\ba{ll}\label{l2e2}
		\fronorm{V^\star S_1 (V^\star)^\top -(VS_1V^\top )} & =
		\fronorm{ES_1(\vrepox)^\top+\vrepox S_1E^\top + ES_1E^\top}\\
		& \ineqov{(a)} 2\fronorm{E}\fronorm{\vrepox S_1} + \fronorm{E}^2
		\fronorm{S_1}\\
		& \overset{(b)}{=} 2\fronorm{E} + \fronorm{E}^2,
		\ea
		\eeq
		where $(a)$ uses the triangle inequality and the submultiplicativity of
		the Frobenius norm.
		We use the orthogonality of the columns of $V$ and of $V^\star$
		and the fact that $\fronorm{S_1}=1$ in step $(b)$.

		A variant of the Davis--Kahan inequality~\cite[Theorem 2]{yu2014useful} asserts that
		$ \fronorm{ E} \leq 4\fronorm{Z-\zsol}/\lambda_{\min>0}(\zsol)$.
		Combining this fact with inequality \cref{eq: lemma: minfeas2eq1}, we see
		$\opnorm{E}\rightarrow 0$ as $\epsilon \rightarrow 0$.
		Now using \cref{l2e2} and \cref{l2e1}, we see that
		for all sufficiently small $\epsilon$,
		$\sigma_{\min} (\mathcal{A}_V) \geq \frac{1}{2}\sigma_{\min}(\mathcal{A}_{V^\star})>0.$
			\end{proof}

% Previous proof
	% \subsection{\minobj and Main Theorem II}\label{sec: minobj}
	% The results in \cref{thm: minfeas} for \cref{minfeas} depend on the conditioning of
	% the problem, which is described by the quantity
	% $\cond=\frac{\sigma_{\max}(\mathcal{A})}{\sigma_{\min}(\mathcal{A}_\vrepox)}$.
	% They also	assume the primal solution is unique,
	% and require the dimension of $V$, $r$,
	% to match the true rank of the solution $\xsol$, $\rsol$.
	% The following theorem shows that \cref{minobj},
	% \begin{equation*}
	% 	\ba{ll}
	% 	\mbox{minimize} & \tr( CV SV^\top)\\
	% 	\mbox{subject to} & \|\mathcal{A}(V SV^\top) -  b\|\leq \delta,\\
	% 	& S\in \sym^r_+,\\
	% 	\ea
	% \end{equation*}
	% approximately recovers
	% a primal solution
	% without primal uniqueness,
	% without assuming that $\rsol$ is known,
	% and without any dependence on $\kappa$.
	% \begin{theorem}(Main Theorem II)\label{thm: minobj}
	% 	Let $y$ be an $\epsilon$-suboptimal dual feasible vector.
	% 	Suppose the dual solution $\ysol$ is unique and that strict complementarity holds.
	% 	There is a constant $c_2$ depending only on $\Amap,\bvec$ and $\objc$
	% 	such that
	% 	if $\epsilon < c_2$ and $r\geq \dm -\rank(\zsol)$,
	% 	then $T:=	\lambda_{n-r}(C-\mathcal{A}^\top y)\geq \frac{1}{2}\lambda_{n-r}(\zsol)>0$,
	% 	and any solution $\tilde{S}$ to \cref{minobj} with
	% 	$$\delta =\delta_0 :=

	\subsubsection{Proof of \cref{thm: minfeas}}

%	\jatnote{We don't actually have a proof of the theorem.  Just connect up the lemmas, as required.}

	Instate the hypotheses of~\cref{thm: minfeas}.  Now, \cref{lemma: minfeas2} implies
	that $\sigma_{\min}(\mathcal{A}_V) > 0$ and that $T > 0$.  Therefore,
	we can invoke~\cref{lemma: minfeas1} to obtain the stated bound on
	$\fronorm{\infeasx - \xsol }$.

	\subsection{Analysis of \cref{minobj}}\label{sec: minobj}

	Next, we establish a result that connects the solution to~\cref{minobj}
	with the solution to the original problem~\cref{p}.

	\begin{theorem}[Analysis of \cref{minobj}] \label{thm: minobj}
		Instate the regularity assumptions in \Cref{sec:assumptions}.
		Moreover, assume $r \geq \rsol$.
		Let $y \in \reals^m$ be feasible for the dual SDP~\cref{d}
		with suboptimality $\epsilon = \epsilon_d(y) = \dval- b^\top y <  c_2$,
		where the constant $c_2 > 0$ depends only on $\Amap,\bvec$ and $\objc$.
		Then the threshold $T := \lambda_{n-r}(Z(y))$ obeys
		$$
		T := \lambda_{n-r}(Z(y)) \geq \frac{1}{2}\lambda_{n-r}(\zsol) > 0.
		$$
		Introduce the quantities
		\[
		\begin{aligned}
		\delta_0 &:=
		\sigma_{\max}(\mathcal{A})\left(\frac{\epsilon}{T} +
		\sqrt{2\frac{\epsilon}{T}
		\opnorm{\xsol}}\right); \\
		\epsilon_0 &:=\min\left\{
		\fronorm{C}
		\left(\frac{\epsilon}{T} + \sqrt{2\frac{\epsilon}{T} \opnorm{\xsol}}\right),
		\opnorm{C} \left(\frac{\epsilon}{T} + \sqrt{2\frac{r \epsilon}{T}
			\opnorm{\xsol}}\right)
			\right\}.
		\end{aligned}
		\]
		If we solve~\cref{minobj} with the infeasibility parameter $\delta = \delta_0$,
		then the resulting matrix $\objx$ is an $(\epsilon_0, \delta_0)$
		solution to~\cref{p}.

		If in addition $C = I$, then $\objx$ is
		superoptimal with $0\geq \epsilon_0 \geq -\frac{\epsilon}{T}.$
	\end{theorem}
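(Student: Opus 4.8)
My plan is to derive all four conclusions from the projection bound \cref{lem: MainProjectionDistanceInnerProduct} together with the fact that the solution of \cref{minobj} minimizes the compressed objective over a feasible set that contains the compression of $\xsol$. First I would establish the threshold estimate $T\geq\frac12\lambda_{n-r}(\zsol)>0$, which is obtained exactly as in the first half of \cref{lemma: minfeas2} and never uses $r=\rsol$: since $r\geq\rsol$ and $\rank(\zsol)=n-\rsol$ we have $\lambda_{n-r}(\zsol)>0$; quadratic growth (\cref{lemma: qg}), after using the a priori bound $\opnorm{Z(y)}\leq 2\opnorm{\zsol}$ valid for small $\epsilon$, gives $\opnorm{Z(y)-\zsol}=\mathcal{O}(\sqrt{\epsilon})$, and Weyl's inequality $|\lambda_{n-r}(Z(y))-\lambda_{n-r}(\zsol)|\leq\opnorm{Z(y)-\zsol}$ then yields $T\geq\frac12\lambda_{n-r}(\zsol)$ once $\epsilon<c_2$ for a constant $c_2$ depending only on $\Amap,\bvec,\objc$.

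Next I would show that $S_0:=V^\top\xsol V\in\sym^r_+$ is feasible for \cref{minobj} with $\delta=\delta_0$. Since $\Av{V}(S_0)=\Amap(P_V(\xsol))$ and $\Amap\xsol=\bvec$, we have $\norm{\Av{V}(S_0)-\bvec}=\norm{\Amap(P_V(\xsol)-\xsol)}\leq\sigma_{\max}(\Amap)\fronorm{\xsol-P_V(\xsol)}$. As in the proof of \cref{lemma: minfeas0}, strong duality and primal feasibility give $\tr(\xsol Z(y))=\epsilon$, so \cref{lem: MainProjectionDistanceInnerProduct} applied with $X=\xsol$, $Z=Z(y)$ (both psd, $T>0$ by the previous paragraph) bounds $\fronorm{\xsol-P_V(\xsol)}\leq\frac{\epsilon}{T}+\sqrt{2\frac{\epsilon}{T}\opnorm{\xsol}}$ and $\nucnorm{\xsol-P_V(\xsol)}\leq\frac{\epsilon}{T}+2\sqrt{r\frac{\epsilon}{T}\opnorm{\xsol}}$; the first of these gives $\norm{\Av{V}(S_0)-\bvec}\leq\delta_0$. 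Now let $\tilde S$ solve \cref{minobj} and set $\objx=V\tilde SV^\top$. Optimality of $\tilde S$ and feasibility of $S_0$ yield $\tr(\objc\objx)=\tr(C_V\tilde S)\leq\tr(C_VS_0)=\tr(\objc\,P_V(\xsol))$, hence
\[
\tr(\objc\objx)-\pval\leq\tr(\objc(P_V(\xsol)-\xsol))\leq|\tr(\objc(\xsol-P_V(\xsol)))|\leq\min\{\,\fronorm{\objc}\fronorm{\xsol-P_V(\xsol)},\ \opnorm{\objc}\nucnorm{\xsol-P_V(\xsol)}\,\},
\]
which is precisely $\epsilon_p(\objx)\leq\epsilon_0$. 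For infeasibility, $\objx\succeq 0$ because $\tilde S\succeq 0$, so $(-\lambda_{\min}(\objx))_+=0$, while $\norm{\Amap\objx-\bvec}=\norm{\Av{V}(\tilde S)-\bvec}\leq\delta_0$ by feasibility of $\tilde S$ for \cref{minobj}; thus $\delta_p(\objx)\leq\delta_0$ and $\objx$ is an $(\epsilon_0,\delta_0)$-solution of \cref{p}.

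When $\objc=I$ we have $C_V=I_r$, so $\tr(\objx)=\tr(\tilde S)\leq\tr(S_0)=\tr(\xsol VV^\top)\leq\tr(\xsol)=\pval$, the last inequality because $0\preceq VV^\top\preceq I$ and $\xsol\succeq 0$; this is the superoptimality $\epsilon_0\leq 0$. For the matching lower bound, set $Q=I-VV^\top$. Because $\range(Q)$ is a $Z(y)$-invariant subspace on which $Z(y)$ acts with eigenvalues at least $\lambda_{n-r}(Z(y))=T$, we get $QZ(y)Q\succeq T\,Q$ and $QZ(y)Q\preceq Z(y)$; together with $\xsol\succeq 0$ these give $T\tr(\xsol Q)\leq\tr(\xsol\,QZ(y)Q)\leq\tr(\xsol Z(y))=\epsilon$, i.e.\ $\tr(\xsol Q)\leq\epsilon/T$, so $\tr(S_0)=\pval-\tr(\xsol Q)\geq\pval-\epsilon/T$. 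It remains to rule out that the minimizer $\tilde S$ of \cref{minobj} dips below this value, and I expect this to be the main obstacle: weak duality for \cref{minobj} using the restricted complementary pair $(P_V(\xsol),\,V^\top\zsol V\succeq 0)$ only certifies $\tr(\objx)\geq\pval-\|\ysol\|_2\delta_0=\pval-\mathcal{O}(\sqrt{\epsilon})$, so pinning down the sharp $\mathcal{O}(\epsilon)$ floor $-\epsilon/T$ requires exploiting the precise geometry of the residual $\Amap\objx-\bvec$ at optimality (showing the objective gain afforded by the infeasibility budget is itself $\mathcal{O}(\epsilon)$) rather than estimating it crudely by Cauchy--Schwarz.
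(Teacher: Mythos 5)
Your argument is essentially the paper's proof: the threshold bound is obtained exactly as in the first half of \cref{lemma: minfeas2} (this is the paper's \cref{eigenvalueb}), feasibility of $P_V(\xsol)$ for \cref{minobj} with $\delta=\delta_0$ plus optimality of the compressed solution and H\"older's inequality give the $(\epsilon_0,\delta_0)$ guarantee, and for $C=I$ the paper proves precisely what you prove, namely $\pval \geq \tr\bigl(P_V(\xsol)\bigr) \geq \pval - \epsilon/T$, hence $\tr(\objx)\leq \tr\bigl(P_V(\xsol)\bigr)\leq \pval$. The final obstacle you flag is not part of the paper's argument: the clause ``$0\geq\epsilon_0\geq-\epsilon/T$'' is read as a statement about the improved suboptimality \emph{bound} $\epsilon_0=\tr\bigl(P_V(\xsol)\bigr)-\pval$ (so $\objx$ is superoptimal and its suboptimality is bounded by a quantity no smaller than $-\epsilon/T$), not as a two-sided bound on the actual value $\tr(\objx)-\pval$; the paper does not prove a lower bound on the minimizer's trace, and the only such bound available from the dual certificate is indeed the $\mathcal{O}(\sqrt{\epsilon})$ one you mention.
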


	\jatnote{*This* is where we give a detailed discussed of the result.}
		The analysis \cref{thm: minfeas} of \cref{minfeas}
		requires knowledge of the solution rank $\rsol$, and the bounds
		depend on the conditioning $\kappa$.
	In contrast, \cref{thm: minobj} does not require knowledge of $\rsol$, and the bounds do not depend on $\kappa$.
		\cref{tb: comparison} compares our findings for the two
		optimization problems \cref{thm: minfeas} and \cref{thm: minobj}.
%that the primal solution $\xsol$ to~\cref{p} is unique,
%as well as

	\begin{remark}\label{Remark2: mainthm2}
		% \cref{thm: minobj} shows that \cref{minobj} provides a good primal
		% reconstruction so long as $r$ is at least as large as $\rsol$.
		The quality of the primal reconstruction depends
		on the ratio between the threshold $T$ and the suboptimality $\epsilon$.
		The quality improves as the suboptimality $\epsilon$ decreases,
		so the primal reconstruction approaches optimality
		as the dual estimate $y$ approaches optimality.
		The threshold $T$ is increasing in the rank estimate $r$, and so
		the primal reconstruction improves as $r$	increases.
		Since $r$ controls the storage required for the primal reconstruction,
		we see that the quality of the primal reconstruction improves
		as our storage budget increases.
	\end{remark}
	\begin{remark}\label{Remark: mainthm2}
		Using the concluding remarks of \cite{sturm2000error},
		the above bound on suboptimality and infeasibility shows that
		the distance between $\objx$ and $\xsol$ is at most
		$\bigO{\epsilon^{1/4}}$. Here, the
		$\bigO\cdot$ notation omits constants depending on $\mathcal{A}$, $b$, and $C$.
	\end{remark}
	\begin{table}{\footnotesize
			\caption{Comparison of \cref{minfeas} and \cref{minobj} given a feasible
				$\epsilon$-suboptimal dual vector $y$.}\label{tb: comparison}}
		\begin{center}
			\begin{tabular}{|c| c|c|}
				\hline
				Assumption and Quality	&
				 \cref{minfeas} & \cref{minobj} \\
				 \hline
	%				Require assumptions in \Cref{sec:assumptions} ?& Yes &  Yes\\
			%	\hline
			%	Require strict complementarity ? & Yes &  Yes\\
			%	\hline
			%	Require uniqueness of primal solution ? &Yes & No \\
			%	\hline
			%	Require uniqueness of dual solution ?& Yes &Yes \\
				\hline
				Require $r=\rsol$ ? &Yes & No\\
				\hline
				Suboptimality &  $\mathcal{O}(\kappa\sqrt{\epsilon})$ &
				$\mathcal{O}(\sqrt{\epsilon})$ \\
				\hline
				Infeasibility & $\mathcal{O}(\kappa\sqrt{\epsilon})$ &
				$\mathcal{O}(\sqrt{\epsilon})$\\
				\hline
				Distance to the solution &  $\mathcal{O}(\kappa\sqrt{\epsilon})$ & \cref{Remark:
					mainthm2}\\
				\hline
			\end{tabular}
		\end{center}
	\end{table}

	The proof of \cref{thm: minobj} occupies the rest of this subsection.

\subsubsection{Bound on the Threshold via Quadratic Growth}

	We first bound $T$ when the suboptimality of $y$ is bounded.
	This bound is a simple consequence of quadratic growth (\cref{lemma: qg}).

	\begin{restatable}{lemma}{eigenvalue}\label{eigenvalueb}
	Instate the hypotheses of \cref{thm: minobj}.
%		Let  $\epsilon = b^\top \ysol - b^\top y $. Suppose the dual solution $\ysol$ is
%		unique and strict complementarity holds. If $\epsilon \leq
%		c_2(\Amap,\bvec,\objc)$,
		Then
		$$
		T := \lambda_{n-r}(Z(y)) \geq \frac{1}{2}\lambda_{n-r}(\zsol)>0.
		$$
%		(Here $c_2(\Amap,\bvec,\objc)$ is the same constant that appears in \cref{thm: minobj}.)
	\end{restatable}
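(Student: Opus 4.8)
The plan is to derive the bound from the quadratic growth estimate (\cref{lemma: qg}) together with Weyl's eigenvalue perturbation inequality. First, strict complementarity (\cref{eqn:strict-complement}) gives $\rank(\zsol) = n-\rsol$, so the nonzero eigenvalues of $\zsol$ are exactly $\lambda_1(\zsol) \ge \dots \ge \lambda_{n-\rsol}(\zsol) > 0$. Since the hypotheses of \cref{thm: minobj} include $r \ge \rsol$, we have $n-r \le n-\rsol$ and hence $\lambda_{n-r}(\zsol) \ge \lambda_{n-\rsol}(\zsol) = \lambda_{\min>0}(\zsol) > 0$; this both establishes the positivity asserted in the lemma and shows that the target quantity $\tfrac12 \lambda_{n-r}(\zsol)$ is a strictly positive constant depending only on the problem data. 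Weyl's inequality then gives $\lambda_{n-r}(\zsol) - T = \lambda_{n-r}(\zsol) - \lambda_{n-r}(Z(y)) \le \opnorm{Z(y)-\zsol} \le \|(Z(y),y)-(\zsol,\ysol)\|$, so it suffices to show that this last quantity is at most $\tfrac12\lambda_{n-r}(\zsol)$ once $\epsilon$ is below an appropriate data-dependent constant $c_2$.

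The quadratic growth lemma bounds $\|(Z(y),y)-(\zsol,\ysol)\|$ by $\frac{1}{\sigma_{\min}(\mathcal{D})}\big[\frac{\epsilon}{\lambda_{\min>0}(\xsol)} + \sqrt{\frac{2\epsilon}{\lambda_{\min>0}(\xsol)}\opnorm{Z(y)}}\big]$, but this is implicit because $\opnorm{Z(y)}$ appears on the right. I would remove that dependence by the same one-step bootstrap used in \cref{lemma: minfeas2}: the trivial estimate $\opnorm{Z(y)} \le \opnorm{Z(y)-\zsol} + \opnorm{\zsol} \le \|(Z(y),y)-(\zsol,\ysol)\| + \opnorm{\zsol}$, combined with the quadratic growth bound, forces $\|(Z(y),y)-(\zsol,\ysol)\| \to 0$ as $\epsilon \to 0$, and in particular $\opnorm{Z(y)} \le 2\opnorm{\zsol}$ once $\epsilon$ is below a data-dependent threshold. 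Feeding $\opnorm{Z(y)} \le 2\opnorm{\zsol}$ back into \cref{lemma: qg} yields the explicit estimate
\[
\|(Z(y),y)-(\zsol,\ysol)\| \le \frac{1}{\sigma_{\min}(\mathcal{D})}\left(\frac{\epsilon}{\lambda_{\min>0}(\xsol)} + \sqrt{\frac{4\epsilon}{\lambda_{\min>0}(\xsol)}\opnorm{\zsol}}\right),
\]
whose right-hand side is a continuous increasing function of $\epsilon$ that vanishes at $\epsilon = 0$.

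Finally, choose $c_2$ small enough that both the bootstrap threshold is met and the right-hand side above is at most $\tfrac12\lambda_{n-r}(\zsol)$ — possible since $\lambda_{n-r}(\zsol) \ge \lambda_{\min>0}(\zsol)$ is a fixed positive constant. Combining with the Weyl bound from the first paragraph gives $T \ge \lambda_{n-r}(\zsol) - \tfrac12\lambda_{n-r}(\zsol) = \tfrac12\lambda_{n-r}(\zsol) > 0$, as claimed. The only genuine subtlety is the implicit appearance of $\opnorm{Z(y)}$ in the quadratic growth bound, which the bootstrap dispatches; everything else is Weyl's inequality and bookkeeping of constants. Note this is precisely the threshold half of the proof of \cref{lemma: minfeas2}, now carried out under the weaker hypothesis $r \ge \rsol$ — nothing in the argument uses $r = \rsol$, only that $\lambda_{n-r}(\zsol) > 0$, which $r \ge \rsol$ guarantees.
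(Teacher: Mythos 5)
Your proposal is correct and follows essentially the same route as the paper: the paper's proof of \cref{eigenvalueb} simply invokes the threshold half of \cref{lemma: minfeas2} verbatim (quadratic growth from \cref{lemma: qg}, the bootstrap $\opnorm{Z(y)}\leq 2\opnorm{\zsol}$, and Weyl's inequality), noting exactly as you do that $r\geq \rsol$ together with strict complementarity guarantees $\lambda_{n-r}(\zsol)\geq\lambda_{\min>0}(\zsol)>0$. Your write-up just spells out the steps that the paper compresses into a one-line reference.
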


\begin{proof}
	The proof follows exactly the same line (without even changing of the notation)
	as the proof of \cref{lemma: minfeas2} in
	assuring $\lambda_{n-\rsol}(Z(y))>0$, by noting
	$\lambda_{n-r}(\zsol)>0$ whenever $r \geq \dm -\rank(\zsol)$.
	\end{proof}
	% To prove \cref{thm: minobj}, recall that \cref{lemma: minfeas0}
	% shows that for any $\xsol\in \mathcal{X}_\star$,
	% $\xsol$ is close to $VV^\top \xsol VV^\top$.
	% We would like $VV^\top \xsol VV^\top$ to be feasible for \cref{minobj}.
	% We can achieve this by choosing the radius
	% $$\delta = \sigma_{\max}(\mathcal{A})\biggr(\frac{\epsilon}{T} +
	% \sqrt{2\frac{\epsilon}{T}}\biggr)$$ in \cref{minobj} (as long as $T>0$).

\subsubsection{Proof of \cref{thm: minobj}}

	\Cref{lemma: minfeas0}
	shows that any primal solution $\xsol$,
	is close to $VV^\top \xsol VV^\top=:P_V(\xsol)$.
	We must ensure that $P_V(\xsol)$ is feasible for \cref{minobj}.
	This is achieved by setting the infeasibility parameter in~\cref{minobj}
	as
	$$
	\delta := \sigma_{\max}(\mathcal{A})\biggr(\frac{\epsilon}{T} +
	\sqrt{2\frac{\epsilon\opnorm{\xsol}}{T}}\biggr)
	$$
	This choice also guarantees all solutions to \cref{minobj}
	are $\delta$-feasible.

		The solution to \cref{minobj} is $\delta_0$-feasible by construction.
		It remains to show the solution is $\epsilon_0$-suboptimal.
		% The previous argument showed that $VV^\top \xsol VV^\top$ is feasible for \cref{minobj},
		% so the problem has a solution.
		We can bound the suboptimality of the feasible point $P_V(\xsol)$
		to produce a bound on the suboptimality of the solution to \cref{minobj}.
		We use H\"older's inequality to translate the bound on the
		distance between $P_V(\xsol)$ and $\xsol$, from \cref{lemma: minfeas0},
		into a bound on the suboptimality:
		\begin{multline*}
		\tr(C (P_V(\xsol)- \xsol )) \\
		\leq
		\epsilon_0 := \min\biggr\{
		\fronorm{C} \biggr(\frac{\epsilon}{T} + \sqrt{2\frac{\epsilon}{T}
			\opnorm{\xsol}}\biggr)
		, \opnorm{C} \biggr(\frac{\epsilon}{T} + \sqrt{2\frac{r \epsilon}{T}
			\opnorm{\xsol}}\biggr) \biggr\}.
		\end{multline*}
		This argument shows that
	$P_V(\xsol)$ is feasible, and hence the solution to \cref{minobj}, is at most $\epsilon_0$ suboptimal.

To prove the improvement for the case $C=I$, we first
complete $V$ to form a basis $W =[U\,V]$ for $\reals^n$,
where $U=[v_{r+1},\dots ,v_n]\in \reals^{n\times (n-r)}$ and where
$v_{i}$ is the eigenvector of $Z$ associated with the $i$-th
smallest eigenvalue. Define $X_1 = U^\top \xsol U$ and $X_2 = V^\top \xsol V$.
We first note that
\begin{equation*}
\begin{aligned}
\tr(\xsol)=\tr(W^\top \xsol W) = \tr(X_1) + \tr(X_2), \quad\text{and}\quad \tr(X_2) = \tr(VV^\top \xsol VV^\top).
\end{aligned}
\end{equation*}
We can bound $\tr(X_1)$ using the following inequality:
\begin{equation*}
\begin{aligned}
\epsilon \overset{(a)}{=}\tr(Z\xsol)
= \sum_{i=1}^n\lambda_{n-i+1}(Z) v_i^\top \xsol v_i\overset{(b)}{\geq} T \sum_{i=r+1}^n v_i^\top \xsol  v_i
= \tr(U\xsol U^\top)
=\tr(X_1).
\end{aligned}
\end{equation*}
Here step $(a)$ is due to strong duality and we uses $v_i^\top \xsol v_i\geq 0$ in step $(b)$ as $\xsol \succeq 0$.
Combing pieces and $\tr(X_1)\geq 0$ as $X_1\succeq 0$, we find that
$$\tr(\xsol)\geq \tr(VV^\top \xsol VV^\top) \geq \tr(\xsol)-\dfrac{\epsilon}{T}.$$
This completes the argument.

	\section{Computational Aspects of Primal Recovery} \label{sec: algorithmguarantee}

	The previous section introduced two methods,
	\cref{minfeas} and	\cref{minobj},
 	to recover an approximate primal
	from an approximate dual solution $y$.
	It contains theoretical bounds on
	suboptimality, infeasibility, and distance
	to the solution set of the primal SDP~\cref{p}.
	We summarize this approach as
	\Cref{primalrecovery}.

	\begin{algorithm}[t]
		\caption{\label{primalrecovery}Primal recovery via \cref{minfeas} or
			\cref{minobj}}
		\begin{algorithmic}[1]
			\REQUIRE{Problem data $\mathcal{A}$, $C$ and $b$; dual vector $y$
				and positive integer $r$}
			\STATE Compute an orthonormal matrix $V \in \reals^{n \times r}$
			whose range is an invariant subspace of $C - \mathcal{A}^\top y$
			associated with the $r$ smallest eigenvalues.
			\STATE Option 1: Solve \cref{minfeas} to obtain a matrix $\hat{S}_{1} \in \sym^r_+$.
			\STATE Option 2: Solve \cref{minobj} by seting  $\delta= \gamma \twonorm{\Av{V}(\hat{S}_1)-b}$ with some $\gamma\geq 1$,
			where $\hat{S}_1$ is obtained from solving \cref{minfeas}. Obtain $\hat{S}_2$.
			\RETURN $(V,S_1)$ for option 1, and $(V,S_2)$ for option 2.
		\end{algorithmic}
	\end{algorithm}

	In this section, we turn this approach into a
	practical optimal storage algorithm,
	by answering the following questions:
	\begin{enumerate}
		\item How should we solve \cref{minfeas} and \cref{minobj}?
		\item How should we choose $\delta$ in \cref{minobj}?
		% The theoretical choice of $\delta$ from the previous section
		% depends on an unknown quantity.
		\item How should we choose the rank parameter $r$? % when the true rank $\rsol$ is unknown?
		\item How can we estimate the suboptimality, infeasibility, and
		(possibly) the distance to the solution to use as stopping conditions?
		% The theoretical bounds depend on conditions which may be hard
		% to check and involve unknown quantities.
	\end{enumerate}
	In particular, our choices for algorithmic parameters
	should not depend on any quantities that are unknown or difficult to compute.
	We address each question in turn.

	For this discussion, let us quantify the cost of the three data
	access oracles~\cref{eq: efficientOperation}.  We use the mnemonic
	notation $L_C$, $L_{\mathcal{A}}$, and $L_{\mathcal{A}^\top}$ for
	the respective running time (denominated in flops)
	of the three operations.

%	Let us first recall the three data access oracles
%	(introduced in \Cref{sec:storage-opt}):
%	for arbitrary vectors $u$, $v\in \reals^n$ and $y \in \reals^m$,
%	\begin{equation*}
%		\begin{aligned}
%			Cv, \qquad (\mathcal A^\top y)v, \qquad \Amap (u v^\top).
%		\end{aligned}
%	\end{equation*}
%	Denote the requiring running time (denominated, perhaps, in flops) of
%	these three operations as
%	$L_C,L_{\mathcal{A}^\top}$ and $L_{\mathcal{A}}$ respectively.

	\subsection{Solving \minfeas and \minobj}
	Suppose that we have a dual estimate $y \in \reals^m$,
	and that we have chosen $r = \bigO{\rsol}$ and $\delta$.
	Each recovery problem, \cref{minfeas} and \cref{minobj},
	is an SDP with an $r \times r$
	decision variable and $m$ linear constraints.
	We now discuss how to solve them with optimal storage $\bigO{m + nr}$.
	First, we present four operators that form the computational core of all the storage optimal algorithms we consider here.
	We list their input and output dimension, storage requirement (sum of input output dimensions), time complexity in evaluating 
	these operators in \cref{table: requirement}.
	\begin{table}
		\centering
		 \caption{Required operators for solving \minfeas and \minobj. The operators 
			$P_{\tiny \mathbf{B}_\delta}$ and  $P_{\tiny \sym_+^r}$ are projections of the $\ell_2$ norm ball $\mathbf{B}_\delta \defn \{y\in \reals^m \mid \twonorm{y}\leq \delta \}$
			and of the PSD matrices of side dimension $r$, $\sym_+^r$, respectively.
			\label{table: requirement}}
	 \begin{tabular}{|c|c|c|c|c|}
	 	\hline 
	Operator & Input & Output & Storage req. & Time Compl. \\  
	\hline 
	 $\Av{V}$	& $S\in \sym^r$ & $\Av{V}(S)\in \reals^\cons$ & $r^2+m$ &  $\bigO{r^2 L_{\mathcal{A}}}$  \\
	  $\Av{V}^\top $	&   $y\in \reals^{\cons}$ & $ V^\top(\mathcal{A}^\top(y))V\in \sym^r$ & $m+r^2$ & $ \bigO{rL_{\mathcal{A}^\top} +nr^2}$ \\
	 	
	  $P_{\tiny \mathbf{B}_\delta}$ & $y\in \reals^{\cons}$ & $P_{\tiny \mathbf{B}_\delta}(y)\in \reals^{\cons}$ & $2m$ & $\bigO{m}$ \\ 
	  $	P_{\tiny \sym_+^r}$ & $S\in \sym^r$ & $P_{\tiny \sym_+^r}(S)\in \sym^r$ & $2r^2$ & $\bigO{r^3}$\\
	 	\hline 
	 \end{tabular}
 \end{table}   
	Any algorithm that uses a constant number of calls to these operators at each iteration
	(and at most $\bigO{m + nr}$ additional storage)
	achieves optimal storage $\bigO{m + nr}$.
	To be concrete, we describe algorithms to solve \cref{minfeas} and \cref{minobj} that
	achieve optimal storage. Many other algorithmic choices are possible.
	\begin{itemize}
		\item For \cref{minfeas}, we can use the accelerated projected gradient method \cite{nesterov2013introductory}.
		This method uses the operators $\Av{V},\Av{V}^\top$, and $P_{\tiny \sym_+^r}$.
		Each iteration requires one call each to $\Av{V}$, $\Av{V}^\top$, and $P_{\tiny \sym_+^r}$,
		and a constant number of additions in $\reals^m$ and $\sym_+^r$.
		Hence the per iteration flop count is $\bigO{r^2 L_{\mathcal{A}} + rL_{\mathcal{A}^\top}
			+ m + r^2n}$.
		As for storage,
		the accelerated projected gradient method requires $\mathcal O(m + r^2)$
		working memory to store the residual
		$\Av{V}(S)-b$,
		the computed gradient,
		and iterates of size $r^2$.
		Hence this method is storage optimal.

		\item For \cref{minobj}, we can use the Chambolle-Pock method \cite{chambolle2011first}.
		We present a detailed description in \Cref{sec: cpminobj}.
		This method requires access to the operators $\Av{V},\Av{V}^\top$, $P_{\tiny \mathbf{B}_\delta}$ and $P_{\tiny \sym_+^r}$.
		It also stores the matrix $C_V = V^T C V \in \reals^{r \times r}$ explicitly.
		We can compute $C_V$ in $r^2L_{C}$ time and store it using $r^2$ storage.
		Each iteration requires one call each to $\Av{V}$, $\Av{V}^\top$,  $P_{\tiny \mathbf{B}_\delta},$ and $ P_{\tiny \sym_+^r}$,
		and a constant number of additions in $\reals^m$ and $\sym_+^r$.
		Hence the per iteration flop count is $\bigO{r^2 L_{\mathcal{A}} + rL_{\mathcal{A}^\top} + m + r^2n}$.
		As for storage, the Chambolle-Pock method requires $\mathcal O(m + r^2)$ working memory
		to store the residual $\Av{V}(S)-b$, one dual iterate of size $m$,
		two primal iterates of size $r^2$,
		and a few intermediate quantities of size $r^2$ or $m$.
		Hence the method is again storage-optimal.
	\end{itemize}

\subsection{Choosing the Rank Parameter $r$}
\cref{thm: minfeas} shows that \cref{minfeas} recovers the solution
when the rank estimate $r$ is accurate. Alas, as $r$ increases,
\cref{minfeas} can have multiple solutions.
Hence it is important to use information about the objective function as well
(\eg using \cref{minobj}) to recover the solution to \cref{p} --- in theory.
In practice, we find that \cref{minfeas} recovers the primal solution well
so long as $r$ satisfies the Barvinok-Pataki bound $\frac{r(r+1)}{2}\leq \cons$.

\cref{thm: minobj} shows that \cref{minobj} is more robust, and provides useful results
so long as the rank estimate $r$ exceeds the true rank $\rsol$.
Indeed, the quality of the solution improves as $r$ increases.
A user seeking the best possible solution to \cref{minobj} should choose
the largest rank estimate $r$ for which the SDP \cref{minobj}
can still be solved, given computational and storage limitations.

It is tempting to consider the spectrum
of the dual slack matrix $C-\mathcal{A}^\top y$,
and in particular, its smallest eigenvalues, to guess the true rank of the solution.
We do not know of any reliable rules that use this idea.

	\subsection{Choosing the Infeasibility Parameter $\delta$}
	To solve \cref{minobj}, we must choose a bound $\delta$ on the acceptable infeasibility.
	(Recall that \cref{minobj} is generally not feasible when $\delta = 0$.)
	This bound can be chosen using the result of \cref{minfeas}. %by bisection or by using the result of \cref{minfeas} to bound the distance
	%between the feasible set $\{X: \mathcal A X = b\}$ and the search space $\{ VSV^T: S \succeq 0 \}$.
	Concretely, solve \cref{minfeas} to obtain a solution $\infeasx$.
	Then set $\delta = \gamma\twonorm{\Av{V}(\infeasx)-b}$ for some $\gamma \geq 1$.
	This choice guarantees that \cref{minobj} is feasible.
	In our numerics, we find $\gamma =1.1$ works well. \footnote{
	It is possible to directly set the value $\delta$ without solving \cref{minfeas} using
	the bounds from \Cref{thm: computablebound} in \Cref{sec: computablebound}
	when additional information or computation budget is available. However, evaluating the bounds (which can be potentially loose)
    might be as hard as solving \cref{minfeas}.}
%	These bounds provides theoretical guarantees
%	on the suboptimality and infeasibility of the solution $\objx$.}
%	However, evaluating the bounds (which can be potentially loose)
%	might be as hard as solving \cref{minfeas}. }% or even requiring solving \cref{minfeas} first.}

	\subsection{Bounds on Suboptimality, Infeasibility, and Distance to the Solution Set}

	Suppose we solve either \cref{minobj} or \cref{minfeas}
	to obtain a primal estimate $X=\objx$ or $X = \infeasx$.
	How can we estimate the suboptimality, infeasibility,
	and distance of $X$ to the solution set of~\cref{p}?

	The first two metrics are straightforward to compute.
	We can bound the suboptimality by $\epsilon_p (X)\leq \tr(CX)  - b^\top y$.
	% (Notice that $X$ need not be feasible, so this quantity is not a duality gap.)
	We can compute the infeasibility as $\delta_p(X) = \twonorm{\mathcal{A}X-b}$.
	In the optimization literature,
	scaled versions of the suboptimality and infeasibility called KKT residuals \cite{wen2010alternating, malick2009regularization, zhao2010newton} are generally used as stopping criteria.

	The distance to the solution requires additional assumptions,
	such as surjectivity of the restricted constraint map $\mathcal A_V$.
	With these assumptions, \cref{lemma: minfeas1} yields a computable (but possibly loose) bound.
  We refer the interested reader to \Cref{sec: computablebound}.

	\section{Computational Aspects of the Dual SDP \cref{d}}\label{sec: sd}

	The previous two subsections showed how to efficiently
	recover
	an approximate primal solution from an approximate dual solution.
	We now discuss how to (approximately) solve the dual SDP \cref{d}
	with optimal storage and with a low per-iteration computational cost.
	Together, the (storage-optimal) dual solver and
	(storage-optimal) primal recovery
	compose a new algorithm for solving regular SDPs with optimal storage.

	\subsection{Exact Penalty Formulation}\label{sec: equivalent formulation}
	It will be useful to introduce an unconstrained version	of the dual SDP
	\cref{d},
	parametrized by a real positive number $\alpha$,
	which we call the penalized dual SDP:
	\beq \label{problem: dp}
	\ba{ll}
	\mbox{maximize} & b^\top y+ \alpha \min\{ \lambda_{\min}(C-\mathcal{A}^\top y),0\}.
	\ea
	\eeq
	That is, we penalize vectors $y$ that violate
	the dual constraint $C-\mathcal{A}^\top y\succeq 0$.

	Problem \cref{problem: dp} is an exact penalty formulation
	for the dual SDP \cref{d}.
	Indeed, the following lemma shows that the solution of Problem \cref{problem:
		dp}
	and the solution set of the dual SDP \cref{d} are the same when
	$\alpha$ is large enough. The proof is based on \cite[Theorem
	7.21]{ruszczynski2006nonlinear}.

	\begin{lemma} \label{lemma: exact}
	%	Assume that \cref{p} and \cref{d} exhibit strong duality. % holds for the primal and dual SDP.
Instate the assumptions in \Cref{sec:assumptions}.	If $b \not = 0$ and $ \alpha > 		\nucnorm{\xsol}$,
		then the penalized dual SDP \cref{problem: dp}
		and the dual SDP \cref{d} have the same solution $\ysol$.
	\end{lemma}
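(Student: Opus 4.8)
The plan is to show the two claims that together establish the lemma: first, that any solution of the penalized problem~\cref{problem: dp} is dual feasible (hence automatically a solution of~\cref{d}), and second, that the dual solution $\ysol$ is a maximizer of the penalized objective. Throughout I would write $\phi_\alpha(y) := b^\top y + \alpha\min\{\lambda_{\min}(C-\mathcal{A}^\top y),0\}$ for the penalized objective, and note that $\phi_\alpha$ is concave (it is the sum of a linear function and $\alpha$ times the minimum of $0$ and the concave function $y\mapsto \lambda_{\min}(C-\mathcal{A}^\top y)$). Concavity means it suffices to check a first-order optimality/subgradient condition at $\ysol$, and to rule out feasible-set-exterior maximizers by a descent argument.

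The core computation is with the subdifferential of the penalty term. Recall that $\lambda_{\min}(M)=-\lambda_{\max}(-M)$, and the superdifferential of $y\mapsto\lambda_{\min}(C-\mathcal{A}^\top y)$ at a point where the slack $Z(y)=C-\mathcal{A}^\top y$ has smallest eigenvalue $0$ consists of the matrices $\mathcal{A}(W)$ (viewed through $\mathcal{A}$) for $W$ in the convex hull of rank-one projectors onto the $\lambda_{\min}$-eigenspace of $Z(y)$; more precisely the relevant chain-rule gives $\partial$ of the composed map in terms of such $W\succeq 0$ with $\tr(W)=1$, $\range(W)\subseteq\nullspace(Z(y))$. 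At $y=\ysol$, strict complementarity~\cref{eqn:strict-complement} tells us $\nullspace(\zsol)=\range(\xsol)$ has dimension exactly $\rsol\geq 1$, so $\lambda_{\min}(\zsol)=0$ and the penalty term is active. I would then exhibit a suitable $W$ — the natural choice is $W = \xsol/\tr(\xsol)$, which is psd, has trace one, and has range inside $\nullspace(\zsol)$ by complementary slackness~\cref{eqn:complementary-slackness} — and verify the stationarity condition $0 \in b - \alpha\,\mathcal{A}(\partial\text{-element})$ scaled appropriately: complementary slackness and primal feasibility give $b = \mathcal{A}\xsol = \tr(\xsol)\,\mathcal{A}(W)$, so choosing the coefficient $\tr(\xsol)\le\nucnorm{\xsol} < \alpha$ places $b$ inside $\alpha$ times the relevant subgradient set. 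Hence $0$ is a supergradient of $\phi_\alpha$ at $\ysol$, so $\ysol$ maximizes $\phi_\alpha$.

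For the converse direction — that every maximizer of $\phi_\alpha$ is dual feasible, hence solves~\cref{d} with the same optimal value (here the hypothesis $b\neq 0$ matters, to rule out the degenerate all-$y$-optimal situation) — I would argue by contradiction: if $\bar y$ is infeasible, i.e. $\lambda_{\min}(Z(\bar y))<0$, move from $\bar y$ toward feasibility. Concretely, since $\mathcal{A}$ has rank $m$ one can, using the known primal solution, find a direction $d$ with $\mathcal{A}^\top d \preceq 0$-ish behavior; cleaner is to compare directly with $\ysol$: along the segment $y_t = \ysol + t(\bar y-\ysol)$, convexity of $-\phi_\alpha$ plus the established optimality of $\ysol$ shows $\phi_\alpha(\bar y)\le\phi_\alpha(\ysol)=d_\star$, with equality forcing $\bar y$ into the solution set of~\cref{d}, which consists of feasible points — the key estimate being that the linear gain $b^\top(\bar y-\ysol)$ is dominated by $\alpha$ times the eigenvalue drop, exactly because $\alpha>\nucnorm{\xsol}\ge\tr(\xsol)$ and $b^\top(\bar y - \ysol) = \tr(\xsol\,\mathcal{A}^\top(\bar y-\ysol)) = \tr(\xsol(\zsol - Z(\bar y))) = -\tr(\xsol Z(\bar y)) \le -\lambda_{\min}(Z(\bar y))\tr(\xsol)$. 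Plugging in the penalty gives $\phi_\alpha(\bar y) - d_\star \le (\tr(\xsol)-\alpha)\,(-\lambda_{\min}(Z(\bar y)))_+ \le 0$, strictly negative unless $\bar y$ is feasible.

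The main obstacle I anticipate is handling the subdifferential/chain-rule bookkeeping for $\lambda_{\min}$ at a degenerate (multiple-eigenvalue) point cleanly; the cited Theorem~7.21 of~\cite{ruszczynski2006nonlinear} presumably packages this, so in practice I would invoke it and spend the effort instead on verifying its hypotheses — primal attainment and the strict-complementarity-driven identity $b=\tr(\xsol)\mathcal{A}(\xsol/\tr(\xsol))$ with the range condition — and on the explicit estimate in the last paragraph, which is elementary once the trace identity $b^\top(\bar y-\ysol)=-\tr(\xsol Z(\bar y))$ is in hand. The hypothesis $\alpha>\nucnorm{\xsol}$ enters only through $\nucnorm{\xsol}\ge\tr(\xsol)$ (with equality since $\xsol\succeq0$, so in fact $\alpha>\tr(\xsol)$ suffices), and $b\neq0$ guarantees $\xsol\neq0$ so that the normalization $W=\xsol/\tr(\xsol)$ is well defined.
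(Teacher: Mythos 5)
Your proposal is correct, and its second half is in fact a more self-contained route than the paper's. The paper's proof simply cites the exact-penalty theorem \cite[Theorem 7.21]{ruszczynski2006nonlinear} and verifies its KKT hypothesis with the multiplier data $\xsol \in \tr(\xsol)\,\partial(\lambda_{\max}(-Z(\ysol)))$, $\tr(\xsol)\lambda_{\max}(-Z(\ysol))=0$, $\Amap(\xsol)=b$, so that $\alpha_0=\tr(\xsol)=\nucnorm{\xsol}$ works; your first step (the supergradient certificate $W=\xsol/\tr(\xsol)$ with coefficient $\tr(\xsol)/\alpha\in(0,1)$) is exactly this verification in different clothing. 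What you add is the elementary estimate $\phi_\alpha(\bar y)-d_\star = -\tr(\xsol Z(\bar y)) - \alpha(-\lambda_{\min}(Z(\bar y)))_+ \leq (\tr(\xsol)-\alpha)\,(-\lambda_{\min}(Z(\bar y)))_+$, obtained from $b^\top(\bar y-\ysol)=-\tr(\xsol Z(\bar y))$ (primal feasibility plus complementary slackness) and $\tr(\xsol Z)\geq \lambda_{\min}(Z)\tr(\xsol)$; this shows directly that every infeasible $\bar y$ is strictly suboptimal for \cref{problem: dp}, while feasible $\bar y$ satisfy $\phi_\alpha(\bar y)=b^\top\bar y\leq d_\star$ with equality only at the unique dual solution $\ysol$. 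That argument makes the citation (and hence your first, subdifferential-based step) unnecessary, at the price of writing out the one-line trace inequality; the paper's route outsources that bookkeeping to the general nonlinear-programming theorem. Both correctly locate where the hypotheses enter: $\alpha>\nucnorm{\xsol}=\tr(\xsol)$ drives the strict gap, $b\neq 0$ rules out the degenerate case, and dual uniqueness pins the equality case to $\ysol$.
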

\begin{proof}[Proof of \cref{lemma: exact}]
	We first note that the dual solution $\ysol$ is the only 
	solution to 
$\min_{\lambda_{\max}(\Amap ^\top y-C)\leq 0} -b^\top y.$
	Using \cite[Theorem
	7.21]{ruszczynski2006nonlinear}, we know that the penalty form \cref{problem: dp} has
	$\ysol$ as its only solution as long as $\alpha>\alpha_0$ for any $\alpha_0\geq 0$
	satisfying the KKT condition:
	\[
	b \in\alpha_0 \Amap \left(\partial (\lambda_{\max}(-Z(\ysol)))\right),\quad \text{and}\quad \alpha_0 \lambda_{\max}(-Z(\ysol))=0.
	\]
	This is the case by noting
	$
	\xsol\in \tr(\xsol)\partial (\lambda_{\max}(-Z(\ysol))
	$,
	$\tr(\xsol) \lambda_{\max}(-Z(\ysol))=0$,
	and
	$
	\Amap(\xsol)=b
	$.
	Hence we can choose $\alpha_0 = \tr(\xsol)$.
\end{proof}
	Thus, as long as we know an upper bound on
	the nuclear norm of the primal solution,
	then we can solve Problem \cref{problem: dp} to find the dual
	optimal solution $\ysol$.
	%
	%It is easy to obtain a bound on
	%$\nucnorm{X^\star}$ in applications:
	%We can bound $\opnorm{\xsol}$ by $\nucnorm{\xsol}$ via
	%$	\opnorm{\xsol} \leq \nucnorm{\xsol}\leq \rsol \opnorm{\xsol}$.
	It is often easy to find a bound on $\nucnorm{\xsol}$ in the following two situations:
	\begin{enumerate}
		\item \emph{Nuclear norm objective.}
		Suppose the objective in \cref{p}  is $\nucnorm{X} = \tr(X)$.
		Problems using this objective include
		matrix completion \cite{candes2009exact},
		phase retrieval \cite{chai2010array},
		and covariance estimation \cite{chen2015exact}.
		In these settings,
		it is generally easy to find a feasible solution or
		to bound the objective via a spectral method.
		(See \cite{keshavan2010matrix} for matrix completion and
		\cite{candes2015phaserev} for phase retrieval.)
		\item \emph{Constant trace constraints.}
		Suppose the constraint $\mathcal{A}X =b$
		enforces $\tr(X) =\beta$ for some constant $\beta$.
		Problems with this constraint
		include Max-Cut \cite{goemans1995improved},
		Community Detection ~\cite{mathieu2010correlation},
		and
		PhaseCut in \cite{waldspurger2015phase}.
		%	In each of these, the constraint is simply that the diagonal of $X$ is
		%	constant: $X_{ii}=\beta_0$ for all $i$ and some constant $\beta_0>0$.
		Then any $\alpha >\beta$ serves as an upper bound.
		In the Powerflow \cite{bai2008semidefinite, madani2015convex} problems, we have
		constraints: $X_{ii}\leq \beta_i,\forall	i$.
		Then any $\alpha >\sum_{i=1}^n\beta_i$ serves as an upper bound.
		(The Powerflow problem does not directly fit into our standard form \cref{p},
		but a small modification of our framework can handle the problem.)
	\end{enumerate}

	When no such bound is available, we may search over $\alpha$ numerically.
	For example, solve Problem \cref{problem: dp} for $\alpha =2,4,8,\dots, 2^d$
	for some integer $d$ (perhaps, in parallel, simultaneously).
	Since any feasible $y$ for the dual SDP \cref{d}
	may be used to recover the primal, using \cref{minfeas} and \cref{minfeas},
	we can use any approximate solution of the penalized dual SDP,
	Problem \cref{problem: dp}, for any $\alpha$,
	as long as it is feasible for the dual SDP.

	Alternatively, the method in \cite{renegar2014efficient}
 which solves \cref{d} directly can also be utilized if a strictly dual feasible point is known.
	For example, $C\succ 0$ and $0\in \reals^\cons$ is a strictly dual feasible point.

	\begin{algorithm}[t]
		\caption{Dual Algorithm $+$ Primal Recovery} \label{alg: accelgrad+minfeas}
		\begin{algorithmic}[1]
			\REQUIRE{Problem data $\mathcal{A}$, $C$ and $b$}
			\REQUIRE{Positive integer $r$ and an iterative algorithm $\alg$ for solving
				the dual SDP }
			\FOR{$k=1,2,\dots$}
			\STATE Compute the $k$-th dual $y_k$ via $k$-th iteration of $\alg$
			\STATE Compute a recovered primal $\hat{X}_k =V\hat{S}V^\top$ using  Primal
			Recovery, \Cref{primalrecovery}.
			\ENDFOR
		\end{algorithmic}
	\end{algorithm}

	\subsection{Computational Cost and Convergence Rate for Primal Approximation}
		\label{sec: computationalcomplexityandconvergencerate}

	Suppose we have an iterative algorithm $\alg$ to solve the dual problem.
	Denote by $y_k$ the $k$th iterate of $\alg$.
	Each dual iterate $y_k$
	generates a corresponding primal iterate
	using either \cref{minfeas} or \cref{minfeas}.
	We summarize this approach to solving the primal SDP in
	\Cref{alg: accelgrad+minfeas}.

	The primal iterates $X_k$ generated by \cref{alg: accelgrad+minfeas}
	converge to a solution of the primal SDP~\cref{p} by our theory.%
	\footnote{Iterative algorithms for solving
		the dual SDP \cref{d} may not give a feasible point $y$.
		If a strictly feasible point is available,
		we can use the method of \cref{lm: boudningdualinfeasibility} or \cref{lm: averaging} in the appendix
		to obtain a sequence of feasible points
		from a sequence of (possibly infeasible) iterates
		without affecting the convergence rate.
		Alternatively, our theory can be extended to handle the infeasible case;
		we omit this analysis for simplicity.
		}
	However, it would be computational folly
	to recover the primal at every iteration:
	the primal recovery problem is much more computationally challenging
	than a single iteration of most methods for solving the dual.
	Hence, to determine when (or how often)
	to recover the primal iterate from the dual,
	we would like to understand
	how quickly the recovered primal iterates converge to the solution of
	the primal problem.

	To simplify exposition as we discuss algorithms for solving the dual,
	we reformulate the penalized dual SDP as a convex minimization problem,
	\beq \label{dpmin}
	\ba{ll}
	\mbox{minimize} & \objp(y):\;=-b^\top y+ \alpha \max\{
	\lambda_{\max}(-C+\mathcal{A}^\top y),0\},\\
	\ea
	\eeq
	which has the same solution set as the penalized dual SDP \cref{problem:
		dp}.

	We focus on the convergence of suboptimality and infeasibility,
	as these two quantities are easier to measure
	than distance to the solution set.
	Recall from Table~\ref{tb: comparison} that
	\begin{equation}\label{relation}
		\begin{aligned}
			\epsilon\text{-optimal dual feasible}\;y \xrightarrow[\text{or
				\minfeas}]{\text{\minobj}}
			(\mathcal{O}(\sqrt{\epsilon}),\mathcal{O}(\sqrt{\epsilon}))\text{-primal
				solution}\; X
		\end{aligned}
	\end{equation}
	if $\kappa = \bigO{1}$.
	Thus the convergence rate of the primal sequence
	depends strongly on the convergence rate of the algorithm
	we use to solve the penalized dual SDP.

	\subsubsection{Subgradient Methods, Storage Cost, and Per-Iteration Time Cost}

	We focus on subgradient-type methods for solving the penalized dual SDP
	\cref{problem: dp},
	because the objective $g_\alpha$ is nonsmooth but has an efficiently computable
	subgradient.
	Any subgradient method follows a recurrence of the form
	\beq
	\ba{ll}\label{e1}
	y_0 \in \reals^m \quad\text{and}\quad y_{k+1} = y_k - \eta_k g_k,
	\ea
	\eeq
	where $g_k$ is a subgradient of $g_\alpha$ at $y_k$ and
	$\eta_k\geq 0$ is the step size.
	Subgradient-type methods differ in
	the methods for choosing the step size $\eta_k$
	and in their use of parallelism.
	However, they are all easy to run for our problem
	because it is easy to compute a subgradient of the dual objective with penalty $g_\alpha$:
	\begin{lemma}
		Let $Z = C-\mathcal{A}^\top y$.
		The subdifferential of the function $g_\alpha$ is
		$$ \partial g_\alpha (y) = \begin{cases}-b+
		\conv \{\alpha\mathcal{A} (vv^\top)\mid  Zv =\lambda_{\min}(Z) v\}, &
		\lambda_{\min}(Z)<0\\
		-b, &\lambda_{\min}(Z)>0\\
		-b + \beta \conv \{ \alpha\mathcal{A} (vv^\top)\mid  Zv =\lambda_{\min}(Z) v,
		\beta\in [0,1]\}, &  \lambda_{\min}(Z)=0
		\end{cases}.$$
	\end{lemma}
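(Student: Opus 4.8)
The plan is to peel $g_\alpha$ apart one convex operation at a time. Write $g_\alpha(y) = -b^\top y + \alpha\,\psi(y)$, where $\psi(y) := \max\{\phi(y),0\}$ and $\phi(y) := \lambda_{\max}(\mathcal{A}^\top y - C)$. Each of $\phi$, $\psi$, and $g_\alpha$ is finite and convex on all of $\reals^m$ (the largest eigenvalue is a convex function, and precomposition with an affine map as well as pointwise maximum both preserve convexity), so the subdifferential sum rule applies with no constraint qualification; in fact it is immediate here since $-b^\top y$ is linear, giving $\partial g_\alpha(y) = -b + \alpha\,\partial\psi(y)$, where I also use $\alpha>0$ for $\partial(\alpha\psi)=\alpha\,\partial\psi$.

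Next I would handle the outer maximum. Since $\psi$ is the pointwise maximum of the two convex functions $\phi$ and the constant $0$, its subdifferential is the convex hull of the subdifferentials of the \emph{active} pieces: if $\phi(y)>0$ then $\partial\psi(y)=\partial\phi(y)$; if $\phi(y)<0$ then $\partial\psi(y)=\{0\}$; and if $\phi(y)=0$ then both are active, so $\partial\psi(y)=\conv(\partial\phi(y)\cup\{0\}) = \{\beta g : \beta\in[0,1],\,g\in\partial\phi(y)\}$, the last equality because $\partial\phi(y)$ is convex.

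It remains to compute $\partial\phi$. The map $L(y) = \mathcal{A}^\top y - C$ is affine with linear part $y\mapsto\mathcal{A}^\top y$, whose adjoint is $W\mapsto\mathcal{A}W$ (since $\langle\mathcal{A}^\top y,W\rangle=\langle y,\mathcal{A}W\rangle$). Because $\lambda_{\max}$ is finite everywhere, the affine chain rule gives $\partial\phi(y) = \{\mathcal{A}W : W\in\partial\lambda_{\max}(\mathcal{A}^\top y - C)\}$. Invoking the classical formula for the subdifferential of the largest-eigenvalue function, $\partial\lambda_{\max}(M) = \conv\{vv^\top : \twonorm{v}=1,\;Mv=\lambda_{\max}(M)v\}$, and specializing to $M = \mathcal{A}^\top y - C = -Z$ with $\lambda_{\max}(-Z) = -\lambda_{\min}(Z)$ and $(-Z)v=\lambda_{\max}(-Z)v \Leftrightarrow Zv=\lambda_{\min}(Z)v$, one obtains $\partial\phi(y)=\conv\{\mathcal{A}(vv^\top):\twonorm{v}=1,\;Zv=\lambda_{\min}(Z)v\}$. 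Substituting this into the three cases above — and noting that the sign of $\phi(y)$ is exactly opposite to the sign of $\lambda_{\min}(Z)$ — reproduces the stated formula. The one place that needs care is the boundary case $\lambda_{\min}(Z)=0$, where the constant piece $0$ must be kept active alongside $\phi$; this is precisely the source of the extra $[0,1]$ scaling factor. The affine chain-rule step is automatic because $\lambda_{\max}$ never takes the value $+\infty$, so no additional regularity hypothesis is required.
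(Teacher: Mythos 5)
Your argument is correct and is exactly the route the paper intends: it merely states that the lemma ``follows directly via standard subdifferential calculus from the subdifferential of $\lambda_{\max}(\cdot)$,'' and your write-up fills in those steps (sum rule, maximum of two convex functions with the active pieces, affine precomposition with adjoint $W\mapsto\mathcal{A}W$, and the classical eigenvalue subdifferential formula). Nothing further is needed; if anything, your version is more careful than the paper, e.g.\ in keeping the unit-norm normalization of the eigenvectors and in handling the boundary case $\lambda_{\min}(Z)=0$ explicitly.
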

	This result follows directly via standard subdifferential calculus
	from the subdifferential of the maximum eigenvalue $\lambda_{\max}(\cdot)$. Thus
	our storage cost is simply $\bigO{m+n}$ where $m$ is due to storing the
	decision variable $y$ and the gradient $g_k$, and $n$ is due to the intermediate
	eigenvector $v\in \reals^{n}$.
	The main computational cost in computing a subgradient
	%(an element of the subdifferential)
	of the objective in~\cref{dpmin}
	is computing the smallest eigenvalue $\lambda_{\min} (C-\mathcal{A}^\top y)$ and
	the corresponding eigenvector $v$ of the matrix $C-\mathcal{A}^\top y$.
	Since $C-\mathcal{A}^\top y$ can be efficiently applied to vectors
	(using the data access oracles~\cref{eq: efficientOperation}),
	we can compute this eigenpair efficiently using the randomized Lanczos method~\cite{kuczynski1992estimating}.

	\subsubsection{Convergence Rate of the Dual and Primal}

	The best available subgradient method \cite{johnstone2017faster}
	has convergence rate $\mathcal{O}(1/\epsilon)$ when
	the quadratic growth condition is satisfied.
	(This result does not seem to appear in the literature for SDP; % not to have been known before this paper;
	however, it is a simple consequence of
  \cite[Table 1]{johnstone2017faster}
  together with the quadratic growth condition proved in \cref{lemma: qg}.)
	Thus, our primal recovery algorithm has convergence rate
	$\mathcal{O}(1/\sqrt{\epsilon})$, using the relation between dual
	convergence and primal convergence in \cref{relation}.
	Unfortunately, the algorithm in \cite{johnstone2017faster}
	involves many unknown constants.
	In practice, we recommend using dual solvers that require
	less tuning such as  AcceleGrad \cite{levy2018online} which
	is the one we used in \Cref{sec: numerics}.
	%For example, in our numerical experiments,
	%we use AdaGrad \cite{duchi2011adaptive},
	%AdaNGD \cite{levy2017online},
	%and AcceleGrad \cite{levy2018online}

	\section{Numerical Experiments}\label{sec: numerics}

	In this section, we give a numerical demonstration of our approach
	to solving~\cref{p} via approximate complementarity.
	We first show that \Cref{primalrecovery} (Primal Recovery)
	recovers an approximate primal given an approximate dual solution.
	Next, we show that \Cref{alg: accelgrad+minfeas}
 	with primal recovery achieves reasonable accuracy $(10^{-1}\sim 10^{-2})$ for extremely large scale problems, \eg $10^5\times 10^5$,
	with substantially lower storage requirements compared to other SDP solvers.

	We test our methods on the Max-Cut and Matrix Completion SDPs,
	defined in \Cref{tb:numericproblem}.
	\begin{table}[tbhp]{
			\vspace{-10pt}
		\caption{Problems for numerics}\label{tb:numericproblem}
	\begin{center}
		\begin{tabular}{|c|c|}
			\hline
				\textbf{Max-Cut} 	 &  \textbf{Matrix Completion}\\
			\hline
			$
			\displaystyle
			\ba{ll}
			\mbox{minimize} & \tr( -LX) \\
			\mbox{subject to} & \diag(X) =  \ones \\
					              & X\succeq 0
			\ea
			$
			&
			$
			\displaystyle
			\ba{ll}
			\mbox{minimize} & \tr(W_1)+\tr(W_2) \\
			\mbox{subject to} & X_{ij} = \trux_{ij},\, (i,j)\in \Omega \\
					              & \begin{bmatrix}
						 						W_1 & X \\ X^\top & W_2
						 					\end{bmatrix}\succeq 0
			\ea
			$
			\\
			\hline
		\end{tabular}
	\end{center}}
\end{table}
For Max-Cut, $L$ is the Laplacian of a given graph. For Matrix Completion,
$\Omega$ is the set of indices of the observed entries of the underlying matrix
$\trux \in \reals^{\dm_1\times \dm_2}$. We use the dual penalty form 
\cref{dpmin} which defines defines $g_\alpha$ to measure both dual suboptimality and infeasibility.
We set $\alpha =1.1n$ for Max-Cut and $2.2\times\nucnorm{\bar{X}}$ for matrix completion throughout our experiments.

	\subsection{Primal Recovery}\label{sec: primalRecovery}
	Our first experiment confirms numerically that \Cref{primalrecovery} (Primal Recovery)
	recovers an approximate primal from an approximate dual solution,
	validating our theoretical results.
	As an example, we present results for the Max-Cut SDP using a Laplacian $L\in \reals^{800\times 800}$
	from the G1 dataset of \cite{Gset}.
	Results for matrix completion and for other Max-Cut problems are similar;
	we present the corresponding experiment for matrix completion in \Cref{sec: primalrecoveryMatrixCompletion}.
	To evaluate our method, we compare the recovered primal
	with the primal dual solution $\xsol$, $\ysol$
	obtained with Sedumi, an interior point solver \cite{sturm1999using}.
	Empirically, the rank of the primal solution $\rsol = 13$.

	% We test recovery using both options presented in \Cref{primalrecovery}:
	% \cref{minfeas} (option 1) and \cref{minobj} (option 2).
	% We also test two different ranks: $r=\rsol$ and $r=3\rsol$.
	To obtain approximate dual solutions $y$,
	we perturb the true dual solution $\ysol$ to generate
	\[
	y = \ysol + \varepsilon s \twonorm{\ysol},
	\]
	where $\varepsilon$ is the noise level, which we vary from $1$ to $10^{-5}$,
	and $s$ is a uniformly random vector on the unit sphere in $\reals^m$.
	For each perturbed dual $y$ and for each rank estimate $r \in \{\rsol, 3\rsol\}$,
	we first solve \cref{minfeas} to obtain a solution $\infeasx$,
	and then solve \cref{minobj} with $\delta =1.1\twonorm{\mathcal{A}\infeasx-b}$.
	We measure the suboptimality of the perturbed dual using
	the relative suboptimality $\frac{|p^\star+g_\alpha(y)|}{|p^\star|}$.
	We measure the distance of the recovered primal to the solution in three ways:
	relative suboptimality $| \tr(CX) - \pval | / \pval$,
	relative infeasibility $\|\mathcal A X - b\| / \|b\|$,
	and relative distance to the solution set $\fronorm{X - \xsol} / \fronorm{\xsol}$.

	\Cref{fig:proofofconcept} shows distance of the recovered primal
	to the solution.
	The blue dots show the primal recovered using $r=\rsol$, while
	the red dots show the primal recovered using $r=3\rsol$.
	The blue and red curves are fit to the dots of the same color to provide a visual guide.
	The red line ($r=3\rsol$) generally lies below the blue line ($r=\rsol$),
	which confirms that larger ranks produce more accurate primal reconstructions.

	These plots show that the recovered primal approaches the true primal solution
	as the dual suboptimality approaches zero,
	as expected from our theory.\footnote{To be precise,
	the theory we present above in \Cref{thm: minfeas} and \Cref{thm: minobj}
	requires the approximate dual solution to be feasible,
	while $y$ may be infeasible in our experiments.
	An extension of our results can show similar bounds when
	$y$ is infeasible but $g_\alpha(y)$ is close to $-\dval$.}
	From \cref{tb: comparison}, recall that we expect
	the primal solution recovered from an $\epsilon$ suboptimal dual solution
	to converge to the true primal solution as $\bigO{\sqrt{\epsilon}}$ with respect to all three measures.
	The plots confirm this scaling for distance to solution and infeasibility,
	while suboptimality decays even faster than predicted by our theory. % $\bigO{\epsilon}$,
	% \mnote{Really? To me they all look like they're going down with about the same power law.}
	% \lnote{I take some closer look and think suboptimality decreases faster...}
	By construction, the primal suboptimality of \cref{minobj} is smaller than
	that of \cref{minfeas};
	however, the plots measure primal suboptimality by its absolute value.
	The kink in the curves desribing primal suboptimality for \cref{minobj}
	separates suboptimal primal solutions (to the left) from superoptimal solutions (to the right).
	% Since \cref{minobj} actually recovers a \emph{superoptimal} $\objx$ ($\tr(C\objx)-\tr(C\xsol)<0$)
	% its ``suboptimality'' appears larger in the figure.
	Finally, notice that $3\rsol =39$ is close to the Barvinok--Pataki bound.
	Interestingly, \cref{minfeas} still performs better with this large feasible set ($r = 3\rsol$)
	than with a smaller one ($r = \rsol$),
	although our theory does not apply.
	% This phenomenon is an intriguing topic for future research.
	\begin{figure}[tbhp]
		\centering
		\begin{subfigure}[b]{1\textwidth}
			\includegraphics[width=1\linewidth]{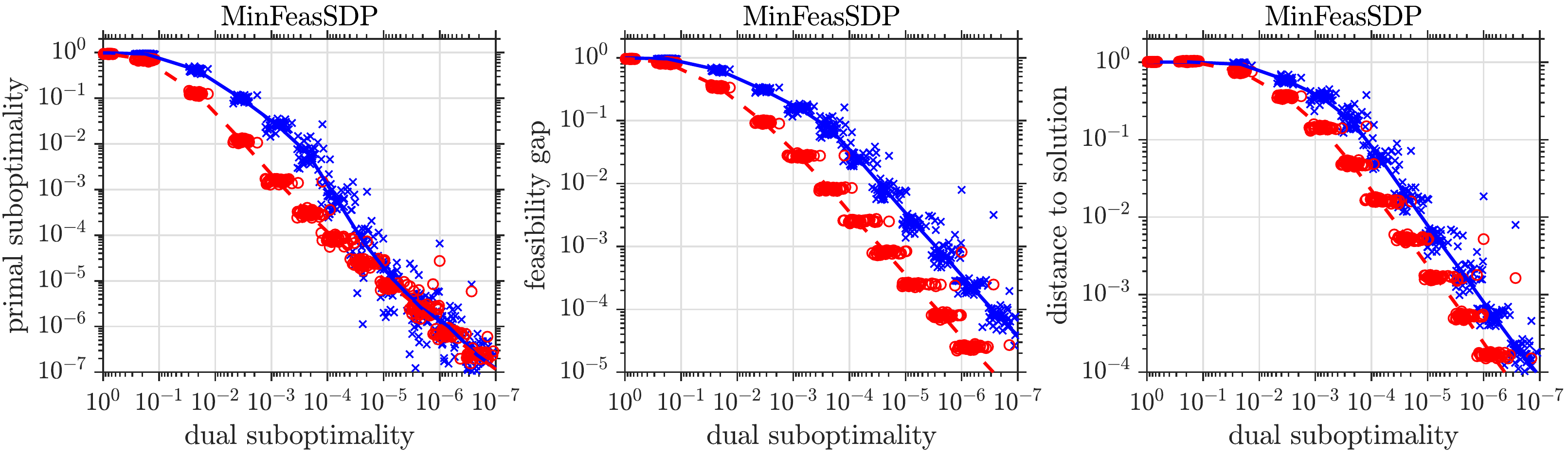}
			\caption{\cref{minfeas}}
			\label{Figure: minfeas}
		\end{subfigure}
		\begin{subfigure}[b]{1\textwidth}
			\includegraphics[width=1\linewidth]{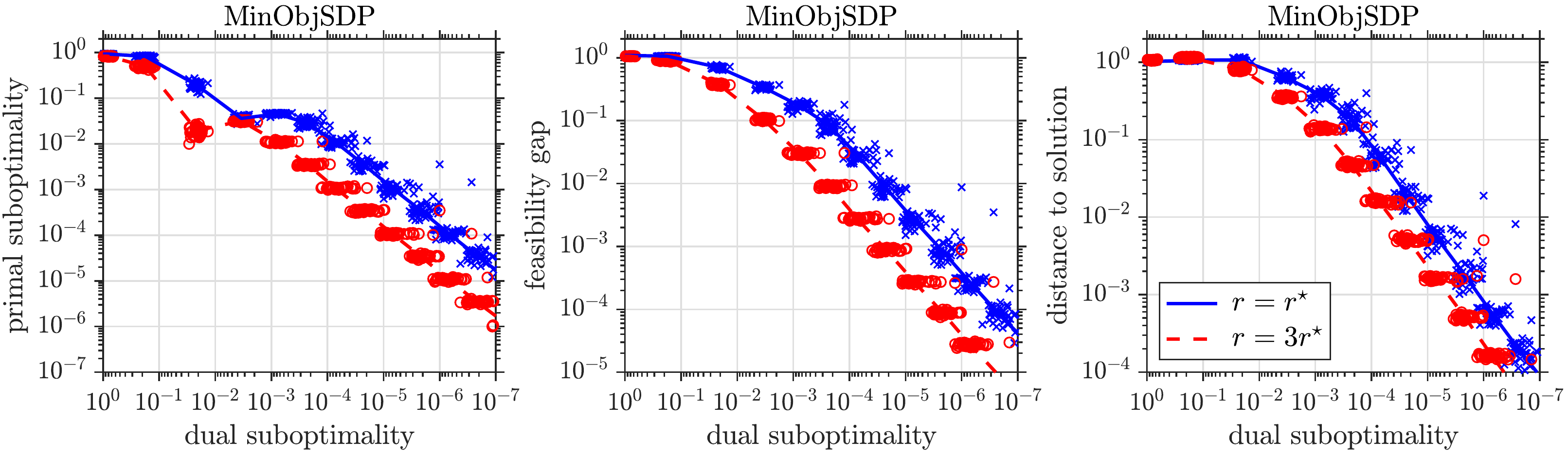}
			\caption{\cref{minobj}}
			\label{Figure: minobj}
		\end{subfigure}
		\caption[Max-Cut]{The plots shows the primal recovery performance of
			\cref{minfeas} (upper) and \cref{minobj} (lower) in terms of 
			(relative) primal suboptimality $| \tr(CX) - \pval | / \pval$,
			(relative) infeasibility gap $\|\mathcal A X - b\| / \|b\|$,
			and (relative) distance to solution $\fronorm{X - \xsol} / \fronorm{\xsol}$. The horizontal
			axis is (relative) dual  suboptimality $\frac{|p^\star+g_\alpha(y)|}{|p^\star|}$. The blue dots corresponds to the choice $r=\rsol$ and the red
			dots corresponds to the choice  $r=3\rsol$ in \cref{alg: accelgrad+minfeas}.}\label{fig:proofofconcept}
	\end{figure}

	\subsection{Storage efficiency comparing to existing solvers} \label{sec: storageVSdimension}
	Experiments in this section show that our method \cref{alg: accelgrad+minfeas} uses less storage
	(for high dimensional problems) than existing algorithms.
	We use AccelGrad \cite{levy2018online} as the dual solver.
	We solve \cref{minobj} to recover the primal, using $\gamma=1.1$ and several different rank estimates $r$.
	We compare \cref{alg: accelgrad+minfeas} against the mature SDP solvers
	Mosek \cite{mosek2010mosek}, SDPT3 \cite{toh1999sdpt3} and Sedumi \cite{sturm1999using},
	and the state-of-the-art SDP solver SDPNAL+ \cite{sun2020sdpnal+}.
	\cref{fig:storage versus dimension plot} (our method is labeled as CSSDP) shows how the storage required for these methods
	scales with the side length $\dm$ of the primal decision variable $X\in \sym^\dm$ for Max-Cut and matrix completion.
	Our Max-Cut problems are drawn from Gset and the DIMACS10 group \cite{DIMACS10}.
	Our matrix completion problems are simulated. %details appear \mnote{XXX}.
%	(Notice storage does not depend strongly on the particular data.)
	We generate rank 5 random matrices $\bar{X} = UV \in \reals^{n_1 \times n_2}$ where
	$U \in \{\pm 1\}^{n_1\times 5}$ and $V \in \{\pm 1\}^{5 \times n_2}$ are random sign matrices.
	We vary the dimensions by setting $n=75c$, $m=50c$ and varying $c= 1,2,4,8,\dots$
	The $25(n_1+n_2)\log(n_1+n_2)$ observations are chosen uniformly at random.

	As can be seen from the plots, the mature solvers Mosek, SDPT4, and Sedumi exceed the storage limit $16$GB
	for matrix completion when $n>10^3$ and for Max-Cut when $n>10^4$.
	SDPNAL+ uses less storage than the mature solvers.
	However, the storage still exceeds $16$GB when $n>10^4$ for both problems.
	In contrast, our method (labeled as CSSDP) scales linearly with the dimension (for any $r$),
	and can solve problems with $n=10^6$ on a 16GB laptop.

	\begin{figure}%[tbhp]
		\centering
				\begin{subfigure}[b]{0.48\textwidth}
		\includegraphics[width=1\linewidth]{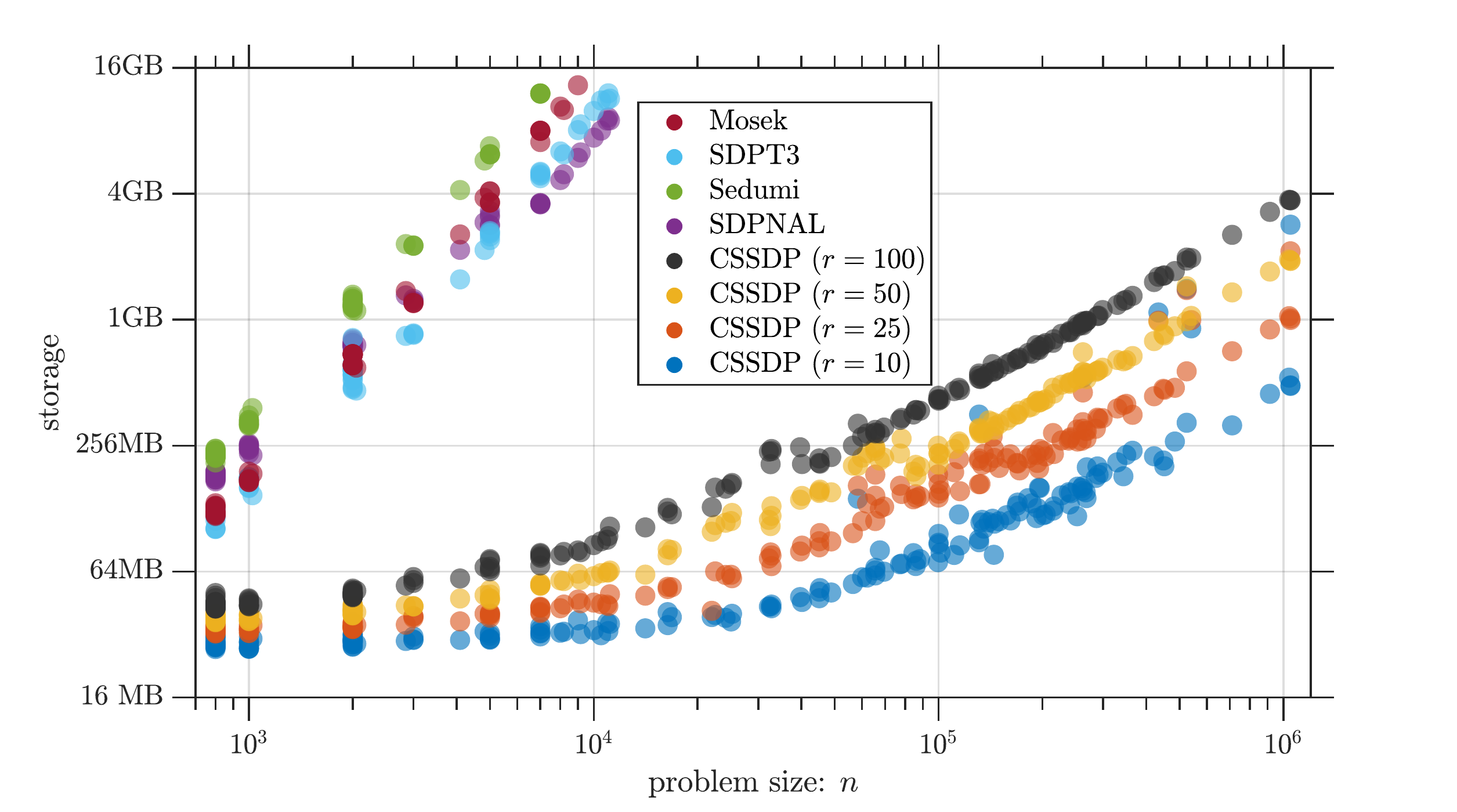}
		\caption{Max-Cut}
		\label{Figure:MaxCutStorageVesussize}
			\end{subfigure}
		\begin{subfigure}[b]{0.45\textwidth}
			\includegraphics[width=1\linewidth]{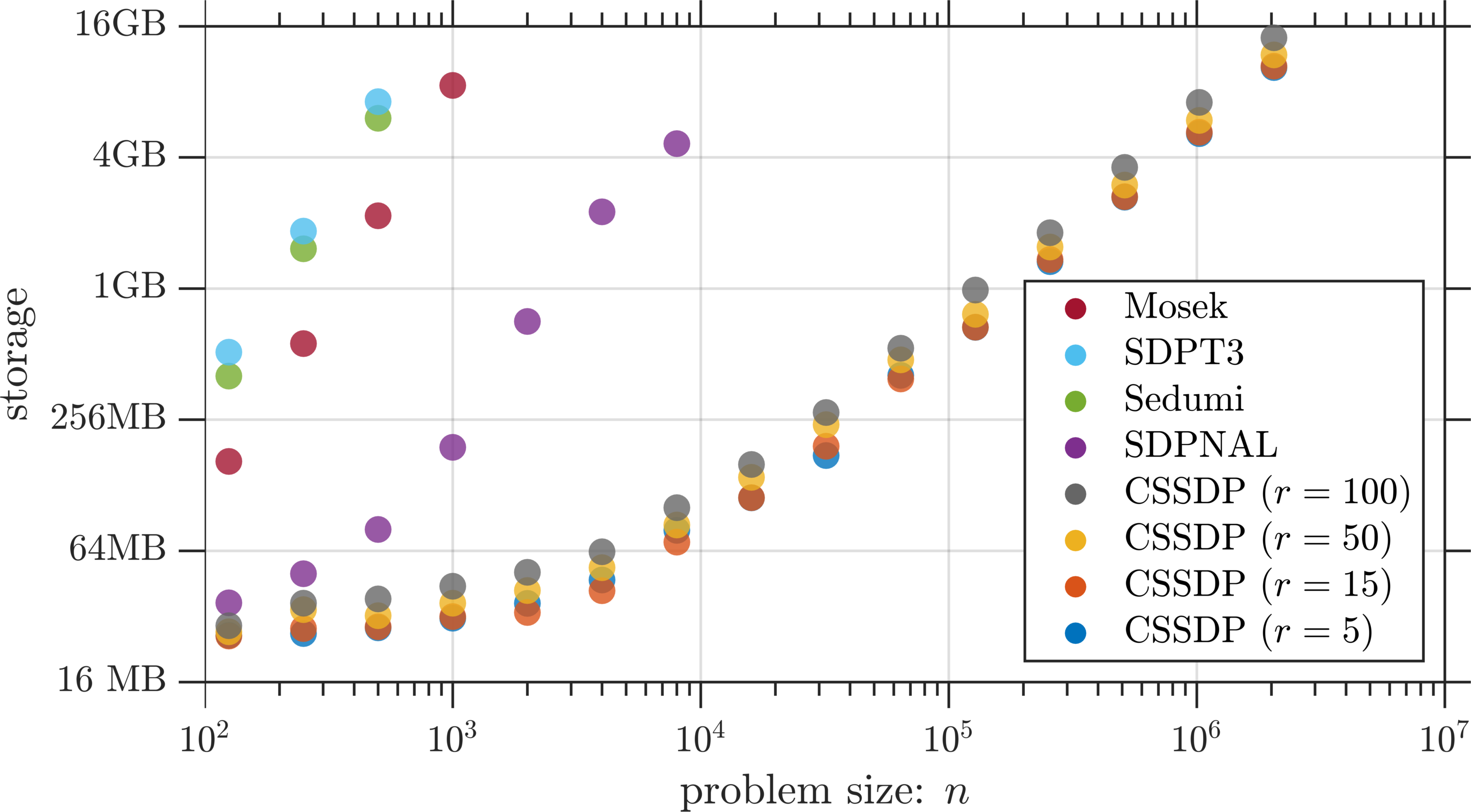}
			\caption{Matrix Completion}
			\label{Figure: matrixcompletionStorageVesussize}
		\end{subfigure}
		\caption[Storage versus dimension]{Here we compare our \cref{alg: accelgrad+minfeas} (shown as CSSDP with different choice
			of $r$), with Mosek, SDPT3, Sedumi, and SDPNAL+. Dots for Mosek, SDPT3, Sedumi, and SDPNAL+ are not shown for large $n$ because they exceed $16$GB.}\label{fig:storage versus dimension plot}
	\end{figure}
	\subsection{Accuracy versus time}\label{sec: accuracyversusTime}
	In this section, we demonstrate that \Cref{alg: accelgrad+minfeas} can solve large scale problems
	that are inaccesible to the SDP solvers	Mosek, SDPT3, Sedumi, and SDPNAL+ due to storage limit.
	Since primal recovery is substantially more expensive than a dual iteration (see \Cref{sec: computationalcomplexityandconvergencerate}),
	we recover the primal only at iterations $10,10^2,10^3,10^4,10^5,\dots$
	We solve both \cref{minfeas}) (option 1) and \cref{minobj}) with $\gamma =1.1$ (option 2) in \cref{primalrecovery}.
	These solutions are shown as the solid and dotted lines in \cref{fig:accuracy_versus_time_and_iteration_counter}, respectively.
	Since we do not know the optimal solution, we track performance using two DIMACS measures of
	(scaled) infeasibility and suboptimality, % for convergence for a reconstructed primal $X$:
\begin{align*}
\textrm{relative feasibility gap:}\qquad &\frac{\twonorm{\Amap(X)-b}}{\twonorm{b}+1} \\
\textrm{relative primal-dual gap:}\qquad &\frac{|\tr(CX)+g_\alpha(y)|}{|\tr(CX)|+|g_\alpha (y)|+1}.
\end{align*}
These measures are commonly used to benchmark SDP solvers \cite{wen2010alternating, malick2009regularization, zhao2010newton}.
Here $\tr(CX)+g_\alpha(y)$ bounds primal suboptimality.
It is traditional to use $\tr(CX) - b^\top y$;
however, here $y$ is not necessarily dual feasible and so
this simpler measure does not bound primal suboptimality.
Results for the Max-Cut SDP on the smallworld graph in the DIMACS10 group \cite{DIMACS10},
with a decision variable of size $10^5\times 10^5$, are shown in \Cref{Figure:MaxCutTimeAccuracy}.
Results for a matrix completion problem, simulated as described in \Cref{sec: storageVSdimension} with $c=1000$,
with decision variable size $(n_1+n_2)^2=(1.25\times 10^{5})^2$ with $n_1=75000$ and $n_2 = 50000$, and over $3.6\times 10^7$ many 
constraints are shown in \Cref{Figure: MatrixCompletionAccuracy}.

As can be seen, the proposed method reaches a solution with $10^{-1}$ infeasibility and $10^{-3}$ suboptimality
in $10^4$ seconds when the rank parameter is large ($r=100$ or $r=250$) for Max-Cut and  $10^{-1}$ infeasibility and $10^{-3}$ suboptimality
in $10^5$ seconds when $r=5$ or $15$ for matrix completion.
These ranks are far smaller than the Barvinok--Pataki bound.
Again, \cref{minfeas} outperforms \cref{minobj} and is faster and easier to compute.
% (option 2) due to the same reasons as explained in \Cref{sec: primalRecovery}.
We plot points according to their (dual) iteration counter;
the top point on each line corresponds to dual iteration 10.
Primal recovery from accurate dual iterates is both more accurate and faster,
so primal iterates recovered from early dual iterates can be dominated by those recovered from later iterates.
% So the first data points in time plots  (the right four plots in \cref{fig:accuracy_versus_time_and_iteration_counter}) corresponded to dual iteration $100$, and the second datapoint correspond to dual iteration $10$. The oscillation in the time plot is because inaccurate dual iterate results in spending more time in primal recovery.

Additional experiments can be found in \Cref{sec: additionalNumerics}.

	\begin{figure}%[tbhp]
	\centering
	\begin{subfigure}[b]{1\textwidth}
		\includegraphics[width=1\linewidth]{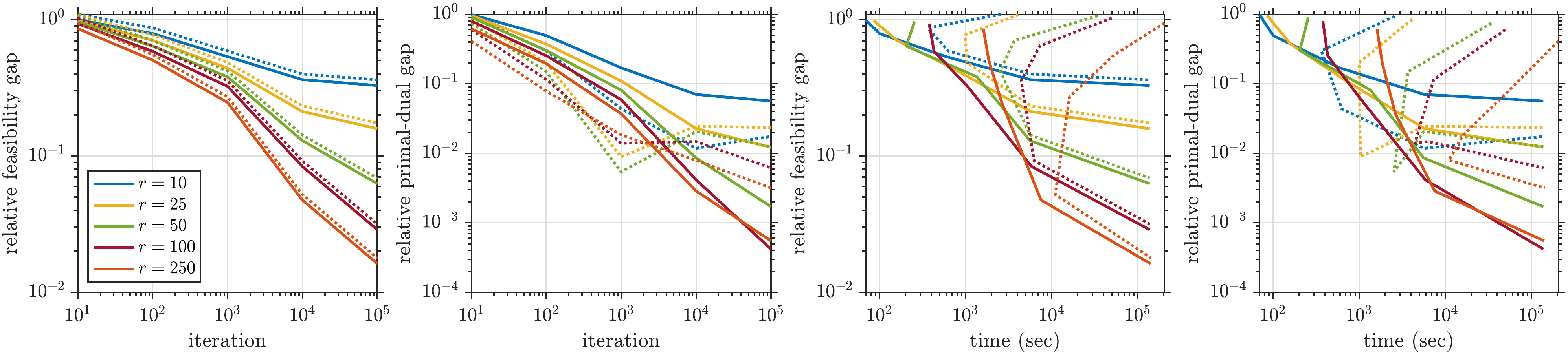}
		\caption{Max-Cut: smallworld}
		\label{Figure:MaxCutTimeAccuracy}
	\end{subfigure}
	\begin{subfigure}[b]{1\textwidth}
	\includegraphics[width=1\linewidth]{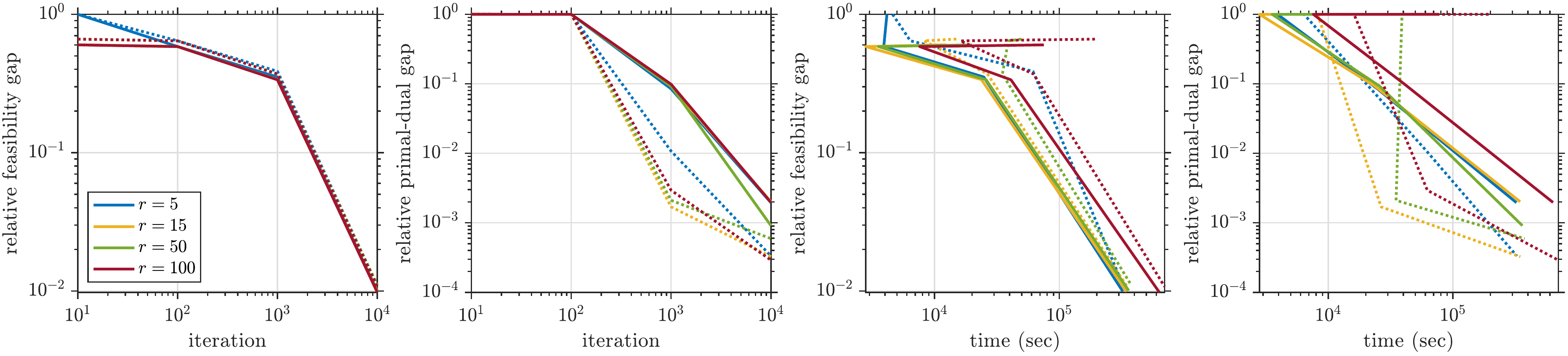}
	\caption{MatrixCompletion: $n_1 = 75000, \quad n_2 = 50000$}
	\label{Figure: MatrixCompletionAccuracy}
\end{subfigure}
	\caption[Accuracy versus actual time and iterations]{
	Convergence of \cref{alg: accelgrad+minfeas} with \cref{minfeas} as the solid line and \cref{minobj} as the dotted line.
	Primal recovery from accurate dual iterates is both more accurate and faster,
	so primal iterates recovered from early dual iterates can be dominated by those recovered from later iterates.}\label{fig:accuracy_versus_time_and_iteration_counter}
    % Here $\langle C,X\rangle =\tr(CX)$ and $\langle b,y\rangle =b^\top y$. }
\end{figure}

	\section{Conclusions}
	\label{sec:conclusions}
	This paper presents a new theoretically justified method to
	recover an approximate solution to a primal SDP from
	an approximate solution to a dual SDP,
	using complementarity between the primal and dual optimal solutions.
	We present two concrete algorithms for primal recovery,
	which offer guarantees on the suboptimality, infeasibility,
	and distance to the solution set of the recovered primal under
	the regular conditions on the SDP, and we demonstrate that
	this primal recovery method works well in practice.

	We use this primal recovery method to develop the
	first storage-optimal algorithm to solve regular SDP:
	use any first-order algorithm to solve a penalized version of the dual problem,
	and recover a primal solution from the dual.
	This method requires $O(m+nr)$ storage:
	the storage is linear in the number of constraints $m$
	and in the side length $n$ of the SDP variable,
	when the target rank $r$ of the solution is fixed.
	These storage requirements improve on the storage requirements
	that guarantee convergence for nonconvex factored
	(Burer-Monteiro) methods to solve the SDP, which scale as $O(\sqrt{m}n)$.
	Furthermore, we show that no method can use less storage without
	a more restrictive data access model or
	a more restrictive representation of the solution.
	We demonstrate numerically that our algorithm is able to solve
	SDP of practical interest including Max-Cut and Matrix Completion.

The ideas illustrated in this paper can be extended to solve problems
with inequality constraints. We leave this extension for future work.
%Further, some of the analytical assumptions can be weakened.
%We leave these extensions for future work.
%\begin{remark}\label{Remark: 2mainthm2}
%	The theorem does not require the uniqueness of the primal solution and the main
%	requirement is strict complementarity and uniqueness of the dual solution
%	(apart from strong duality). Both conditions can be removed with the consequences that
%	 \begin{enumerate}
%	 	\item The lower bound $\frac{1}{2}\lambda_{n-r}(\zsol)$ of $T$ is replaced by
%	 	$\frac{1}{2}\min_{\ysol\in \mathcal{Y}{_\star}}\lambda_{n-r}(\zsol)$ and a much lower
%	 	$c_2$ \footnote{The claim can be proved in the same way as \cref{lemma: minfeas2}
%	 		with the help of \cite[Sec.~4]{sturm2000error}},
%	 	\item For any primal dual pair $(\xsol,\ysol)\in \mathcal{X}_\star \times
%	 	\mathcal{Y}_\star$, there is no clear relation between the dimension of the null space
%	 	 of $\zsol = C-\mathcal{A}^\top\ysol$
%	 	and the rank of primal solution $\xsol$. Thus we can not control $d=\dim(\nullspace(\zsol))$
%	 	via the rank of primal
%	 	solutions even if any primal solution $\xsol\in \mathcal{X}_\star$ has very low rank,
%	 	\eg $\rsol(\xsol)$ is constant  or $\bigO{\log(n)}$ for all $\xsol\in \mathcal{X}_\star$.
%	 	However, by utilizing the argument in \cite{boumal2016non}, for almost all $C$,
%	 	the dimension $d$ for all
%	 	$y\in \mathcal{Y}{_\star}$ is bounded by
%	 	\[
%	 	{{d+1}\choose 2}\leq m+1 .
%	 	\]
%	 \end{enumerate}
%\end{remark}

%\clearpage
\appendix
\section{Lemmas for \Cref{sec: introduction}} \label{sec: lemmaOfIntroduction}
%The following theorem, whose proof is deferred to \Cref{sec:
%lemmaOfIntroduction},
%reveals the challenge:
To establish \cref{lemma: qg}, we prove a lemma concerning the
operator $\Amap_{\vrepox}$.

\begin{lemma}\label{thm: uniqueness}
	Instate the hypothesis of \Cref{sec:assumptions}. Then
	$\nullspace(\mathcal{A}_{V^\star}) = \{0\}$.
\end{lemma}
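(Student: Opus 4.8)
The plan is to show that every $S$ in the null space of $\Amap_{\vrepox}$ generates a one–parameter family of primal optima, which must be constant because the regularity assumptions force the primal solution $\xsol$ to be unique (as recorded in \Cref{sec:assumptions}). Concretely, I would fix $S \in \sym^{\rsol}$ with $\Amap_{\vrepox}(S) = \Amap(\vrepox S \vrepox^\top) = 0$ and aim to deduce $S = 0$; we may assume $S \neq 0$, as otherwise there is nothing to prove.

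First I would write $\xsol = \vrepox M \vrepox^\top$ for some $M \in \sym^{\rsol}$: this is possible since the columns of $\vrepox$ are an orthonormal basis of $\vspacex = \range(\xsol)$, and because $\xsol \succeq 0$ has rank exactly $\rsol$ and $\vrepox$ has full column rank, the matrix $M$ is positive definite. For $t \in \reals$, set $X_t := \xsol + t\,\vrepox S \vrepox^\top = \vrepox(M + tS)\vrepox^\top$. Then $\Amap(X_t) = \Amap(\xsol) + t\,\Amap(\vrepox S\vrepox^\top) = b$ for every $t$, and $M + tS \succ 0$ — hence $X_t \succeq 0$ — as soon as $|t| < \lambda_{\min}(M)/\opnorm{S}$. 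So $X_t$ is feasible for \cref{p} for all sufficiently small $|t|$.

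Next I would check that $X_t$ is in fact optimal. Complementary slackness $\xsol\zsol = 0$ gives $\range(\xsol) \subseteq \nullspace(\zsol)$, i.e. $\zsol\vrepox = 0$, equivalently $C\vrepox = (\Amap^\top\ysol)\vrepox$. Consequently $\tr(C\vrepox S\vrepox^\top) = \tr\!\big((\Amap^\top\ysol)\vrepox S\vrepox^\top\big) = \ysol^\top \Amap(\vrepox S\vrepox^\top) = 0$ by the adjoint identity and the assumption $\Amap(\vrepox S\vrepox^\top)=0$. Hence $\tr(CX_t) = \tr(C\xsol) = \pval$ for all $t$, so $X_t$ solves \cref{p} for every small $|t|$. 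Uniqueness of the primal solution then forces $X_t = \xsol$, so $t\,\vrepox S\vrepox^\top = 0$; choosing $t \neq 0$ and using that $\vrepox$ has orthonormal columns yields $S = 0$. This proves $\nullspace(\Amap_{\vrepox}) = \{0\}$.

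The main conceptual step is recognizing the right perturbation direction — namely matrices of the form $\vrepox S\vrepox^\top$, which keep us inside the cone near $\xsol$ — and observing that complementary slackness is exactly what makes the first-order change of the objective vanish; after that, the only mildly technical point is the positive definiteness of $M$ and the resulting interval of feasibility for $t$, which is routine.
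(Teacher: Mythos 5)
Your proof is correct and takes essentially the same route as the paper: perturb $\xsol = \vrepox \ssol \vrepox^\top$ along a kernel direction $\vrepox S \vrepox^\top$, note feasibility for small perturbations since $\ssol \succ 0$, and contradict uniqueness of the primal solution. The only (harmless) difference is the final step, where the paper simply chooses the sign of the perturbation so the objective does not increase, while you use complementary slackness ($\zsol\vrepox = 0$) to show the objective is exactly unchanged; both produce a second optimal point and the same contradiction.
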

\vspace{-8pt}
\begin{proof}[Proof of \cref{thm: uniqueness}]
	Suppose by way of contradiction that $\ker(\Av{\vrepox})\not= \{0\}$.
	Let $S \in \ker(\Av{\vrepox})$, so $\Amap_{\vrepox}(S) =0.$
	Recall $\xsol = \vrepox\ssol (\vrepox)^\top$ for some unique $\ssol \succ 0$.
	Hence for some $\alpha_0>0$, $\ssol +\alpha S\succeq 0$
	for all $|\alpha|\leq \alpha_0$.
	Now pick any $\alpha$ with $|\alpha|\leq \alpha_0$ to see
\vspace{-8pt}
\[
	\Amap
	(X_{\alpha}) = \Av{\vrepox}(\ssol +\alpha S) = \Av{\vrepox}(\ssol ) + 0 = b.
\vspace{-5pt}
\]
	This shows $X_{\alpha}$ is feasible for all $|\alpha|\leq \alpha_0$.
	But we can always find some $|\alpha|\leq \alpha_0$, $\alpha \ne 0$, so that
	$\tr(\objc X_\alpha) = \pval + \alpha\tr(\objc \vrepox (S\vrepox)^{\top})\leq \pval$.
	This contradicts the assumption that $\xsol$ is	unique.
	Hence we must have $\nullspace(\Av{\vrepox}) = \{0\}$.
\end{proof}
\begin{proof}[Proof of \cref{lemma: qg}]
 Consider the linear operator $\mathcal{D}$ defined in \cref{lemma: qg}.
	An argument similar	to the proof of \cref{thm: uniqueness} shows $\ker(\mathcal{D}) =\{0\}$ (see Lemma \ref{lem: dualuniquenessmaplemma} below
     for more details).
	Hence
\vspace{-10pt}
	\[
	\|(Z(y),y)-(\zsol,\ysol)\| \leq \frac{1}{\sigma_{\min}(\mathcal{D})}\|
	\mathcal{D}(Z(y)-\zsol,y-\ysol)\|.
	\vspace{-5pt}
	\]
By utilizing Lemma \ref{lem: MainProjectionDistanceInnerProduct} with $X=Z(y)$ and $Z=\xsol$ and
note $\epsilon=\tr(Z(y)\xsol)=\epsilon_d(y)=b^\top \ysol -b^\top y$ (from strong duality), we see that
\vspace{-10pt}
	\[
	\fronorm{Z(y)- (U^\star)(U^\star)^\top Z(y) (U^\star)(U^\star)^\top}\leq
\frac{\epsilon}{\lambda_{\min>0}(\xsol)}+
	\sqrt{\frac{2\epsilon}{\lambda_{\min>0}(\xsol)}\opnorm{Z(y)}}.
	\]
We also have
\vspace{-10pt}
\[
\mathcal{D}(Z(y)-\zsol,y-\ysol) = (\mathcal{D}Z(y), 0) = Z(y) -
	(U^\star)(U^\star)^\top Z(y) (U^\star)(U^\star)^\top.
	\vspace{-5pt}
\]
	Combining the above pieces, we get the results in \cref{lemma: qg}.
\end{proof}
%\subsection{Lemmas for \cref{sec: lemmaOfIntroduction}}\label{sec: lemmasForAppendixA}
%We present the lemma saying that the dual operator 
%$\mathcal{D} (Z,y) = (Z- (\urepox \urepox^\top) Z (\urepox \urepox^\top), Z+\mathcal{A}^\top y)$ 
%has a trivial nullspace and hence the smallest singular value $\sigma_{\min}(\mathcal{D})\not=0$. 
%This follows almost the same argument as \cref{thm: uniqueness}.
\begin{lemma}\label{lem: dualuniquenessmaplemma}
	Instate the hypothesis of \Cref{sec:assumptions}. Then
	$\nullspace(\mathcal{D}) = \{0\}$.
\end{lemma}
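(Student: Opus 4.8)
The plan is to mimic the proof of \cref{thm: uniqueness}: a nonzero element of $\nullspace(\mathcal{D})$ produces a line of dual feasible points through $\ysol$ along which the dual objective is constant, contradicting uniqueness of the dual solution. So suppose $(Z,y)\in\nullspace(\mathcal{D})$, i.e. $Z-(\urepox\urepox^\top)Z(\urepox\urepox^\top)=0$ and $Z+\mathcal{A}^\top y=0$. The first equation says $Z=\urepox N\urepox^\top$ with $N:=\urepox^\top Z\urepox\in\sym^{\dm-\rsol}$, so $\range(Z)\subseteq\uspaceox$; the second says $Z=-\mathcal{A}^\top y$. If $y=0$ then $Z=-\mathcal{A}^\top y=0$ and we are done, so assume $y\neq 0$ and aim for a contradiction.

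Next I would perturb the dual solution along $y$. For $t\in\reals$ set $y_t:=\ysol+ty$; then $Z(y_t)=C-\mathcal{A}^\top(\ysol+ty)=\zsol-t\mathcal{A}^\top y=\zsol+tZ$, using $-\mathcal{A}^\top y=Z$. Under the regularity assumptions, complementary slackness together with strict complementarity force $\rank(\zsol)=\dm-\rsol$, and since $\uspaceox=\range(\zsol)$ is spanned by the columns of $\urepox$, we may write $\zsol=\urepox M\urepox^\top$ with $M:=\urepox^\top\zsol\urepox\in\sym^{\dm-\rsol}$, where $M$ is PSD of full rank $\dm-\rsol$, hence $M\succ 0$ (indeed $\lambda_{\min}(M)=\lambda_{\min>0}(\zsol)>0$). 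Therefore $Z(y_t)=\urepox(M+tN)\urepox^\top\succeq 0$ for every $t$ with $M+tN\succeq 0$, which holds for all $|t|\le t_0$ where $t_0:=\lambda_{\min}(M)/(\opnorm{N}+1)>0$. Thus $y_t$ is feasible for the dual SDP \cref{d} for all $|t|\le t_0$.

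Finally I would invoke dual uniqueness. Since $\ysol$ is the (unique) maximizer of $b^\top y$ over dual feasible points and $b^\top y_t=b^\top\ysol+t\,b^\top y$, optimality of $\ysol$ over the feasible segment $\{y_t:|t|\le t_0\}$ forces $b^\top y=0$; but then $b^\top y_t=b^\top\ysol$ for all $|t|\le t_0$, so $y_{t_0}$ is also a dual optimum with $y_{t_0}\neq\ysol$ (as $t_0>0$ and $y\neq 0$), contradicting uniqueness. Hence $y=0$, and then $Z=-\mathcal{A}^\top y=0$, so $\nullspace(\mathcal{D})=\{0\}$. The only step requiring care is the claim $M=\urepox^\top\zsol\urepox\succ 0$, which is exactly where strict complementarity is used (it guarantees $\rank(\zsol)=\dm-\rsol$ so that $\zsol$ is nondegenerate on its range $\uspaceox$); everything else is the perturbation-and-uniqueness template already used for \cref{thm: uniqueness}.
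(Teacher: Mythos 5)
Your proposal is correct and follows essentially the same route as the paper's proof: perturb $\ysol$ along the nullspace direction, use strict complementarity to get $\urepox^\top\zsol\urepox\succ 0$ so the perturbed slack matrices stay feasible for small $|t|$, and then contradict optimality/uniqueness of $\ysol$ depending on the sign of $b^\top y$. Your explicit handling of the $y=0$ case and the quantitative choice of $t_0$ are minor tidy-ups of the same argument.
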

\begin{proof}
	Suppose $\mathcal{D}\not=\{0\}$, then there is some $\hat{Z}$ and $\hat{y}$ such that \[
	\mathcal{D}(\hat{Z},\hat{y})=0 \implies \hat{Z}=-\Amap^\top \hat{y} \quad \text{and}\quad \hat{Z}= (\urepox \urepox^\top) \hat{Z} (\urepox \urepox^\top)
	\]
	Then we claim $y_\gamma =\ysol +\gamma \hat{y}$ is also an solution to the dual SDP \cref{d}, which violates 
	the unique assumption. Indeed, $Z(y_\gamma)$ satisfies 
	\begin{equation}
	\begin{aligned} 
	Z(y_\gamma) & = C -\Amap^\top (y_\gamma)\\
	&=\zsol - \gamma \Amap ^\top\hat{y}\\ 
	& \overset{(a)}{=} \zsol +\gamma \hat{Z} \\
	& \overset{(b)}{=} \urepox \left(\urepox ^\top \zsol \urepox +\gamma \urepox ^\top \hat{Z}\urepox   \right) \urepox  ^\top
	\end{aligned} 
	\end{equation}
	Here we use the $\mathcal{D}(\hat{Z},\hat{y})=0$ in step $(a)$ and $(b)$. Because of strict complementarity, $\range(\urepox) = \range(\zsol)$
	and hence $ \urepox ^\top \zsol \urepox \succ 0$. Hence there is some constant $c>0$ such that for all 
	$|\gamma|<c$, $Z(y_\gamma)$ is still feasible for the dual SDP \cref{d}. But the 
	objective  $b^\top \ysol + \gamma b^\top \hat{y}$ then can be larger to $b^\top \ysol$ for all $|\gamma|<c$ and 
	$\gamma$ matching the sign of $b^\top \hat{y}$, and equal to $b^\top \ysol$ for all $|\gamma|<c$ if $b^\top \hat{y}=0$.
\end{proof} 

\section{Lemmas from \Cref{sec: oracleinequalities}}\label{sec: lemmasForSectionOfOracleinequalities}
We first prove \Cref{lem: MainProjectionDistanceInnerProduct} concerning the distance to subspaces and the
inner product.
\begin{proof}[Proof of \cref{lem: MainProjectionDistanceInnerProduct}]
	Complete $V$ to form a basis $W =[U\,V]$ for $\reals^n$,
	where $U=[v_{r+1},\dots ,v_n]\in \reals^{n\times (n-r)}$ and where
	$v_{i}$ is the eigenvector of $Z$ associated with the $i$-th
	smallest eigenvalue.

	Rotating into the coordinate system formed by $W=[V ;U]$, let us compare
	$X$ and its projection into the space spanned by $V$, $P_V(X)\defn  VV^\top XVV^\top $,
	$$W^\top X W^  =
	\begin{bmatrix}
	U^\top X U & U^\top X V\\
	V^\top X U & V^\top X V
	\end{bmatrix}, \quad \text{and}\quad W^\top P_V(X) W =  \begin{bmatrix}
	0 & 0\\
	0 & V^\top X V
	\end{bmatrix}.$$
	Let $X_1 = U^\top X U$, $B =U^\top X V$ and $X_2 = V^\top X V$.
	Using the unitary invariance of $\fronorm{\cdot}$, we have
	$P_{V^\perp }(X)\defn X - VV^\top X VV^\top$ satisfying 
	\begin{equation}\label{eq: MainLemmaEq0}
	\begin{aligned}\fronorm{ P_{V^\perp }(X)}=
	\fronorm{ W^\top XW - W^\top VV^\top X VV^\top W }= \fronorm{\begin{bmatrix}
		X_1 & B\\
		B &  0
		\end{bmatrix}}.
	\end{aligned}
	\end{equation}
	A similar equality holds for $\nucnorm{\cdot}$.
	Thus we need only bound the terms $X_1$ and $B$.
	Applying \Cref{lemma: auxSecOracleinequalities} to
	$WX
	W^\top= \begin{bmatrix}
	X_1 & B \\ B^\top  & X_2
	\end{bmatrix}$, we have
	\beq
	\ba{ll}\label{eq: MainLemmaEq1}
	\opnorm{X_2} \tr(X_1) & \geq \nucnorm{BB^\top} .\\
	\ea
	\eeq

	Since all the vectors in $U$ have corresponding eigenvalues
	at least as large as the threshold $T=\lambda _{n-r}(Z)>0$,
	and $Z\succeq 0$ by assumption, we have
	\begin{align}
	\epsilon =\tr(ZX)
	= \sum_{i=1}^n\lambda_{n-i+1}(Z) v_i^\top X v_i
	%	\geq \sum_{i=r+1}^n \lambda_{n-i+1}(Z)v_i^\top X v_i
	\geq \lambda_{n-r}(Z) \sum_{i=r+1}^n v_i^\top X v_i.
	\end{align}
	This inequality allows us to bound $\fronorm{X_1}$ as
	\begin{align} \label{eq: MainLemmaEq2}
	\dfrac{\epsilon}{T} \geq
	\sum_{i=r+1}^n v_i^\top X v_i
	= \tr(UXU^\top)
	=\tr(X_1)
	= \nucnorm{X_1}
	\geq \fronorm{X_1},
	\end{align}
	where we recall $X_1\succeq 0$ to obtain the second to last equality.
	Combining \cref{eq: MainLemmaEq1}, \cref{eq: MainLemmaEq2}, and $\opnorm{X_2}\leq
	\opnorm{X}$, we  have
	\begin{align} \label{eq: MainLemmaEq3}
	\nucnorm{BB^\top} \leq  \frac{\epsilon}{T}
	\opnorm{X_2} \leq\frac{\epsilon}{T}\opnorm{X}.
	\end{align}
	Basic linear algebra shows
	\beq
	\ba{ll}\label{eq: MainLemmaEq4}
	\fronorm{\begin{bmatrix}
			0 & B\\
			B^\top  &  0
	\end{bmatrix}}^2 =  \tr\biggr(\begin{bmatrix}
		BB^\top  & 0\\
		0 &  B^\top B
	\end{bmatrix}\biggr)\leq2\tr(BB^\top )   = 2\nucnorm{BB^\top}.
	\ea
	\eeq
	Combining pieces, we bound the error in the Frobenius norm:
	\begin{equation}\label{eq: fronormbound}
	\begin{aligned}
	\fronorm{X - V V^\top X V V^\top }& \ineqov{(a)}
	\fronorm{X_1} + \fronorm{\begin{bmatrix}
		0& B\\
		B^\top  &  0
		\end{bmatrix}}
	&\ineqov{(b)}  \frac{\epsilon}{T} + \sqrt{2\nucnorm{BB^\top} }\\
	&  \ineqov{(c)} \frac{\epsilon}{T} + \sqrt{\frac{2\epsilon}{T}\opnorm{X}},
	\end{aligned}
	\end{equation}
	where step $(a)$ uses \cref{eq: MainLemmaEq0} and the triangle
	inequality; step $(b)$ uses \cref{eq: MainLemmaEq1} and \cref{eq: MainLemmaEq4};
	and step $(c)$ uses \cref{eq: MainLemmaEq3}.
	Similarly, we may bound the error in the nuclear norm:
	\begin{align*}
	\nucnorm{\xsol - VV^\top \xsol VV^\top}
	& \ineqov{(a)}  \nucnorm{X_1}+ \nucnorm{\begin{bmatrix}
		0 & B\\
		B^\top  &  0
		\end{bmatrix}}
	& \ineqov{(b)} \tr(X_1)+\sqrt{2 r}\fronorm{\begin{bmatrix}
		0 & B\\
		B^\top  &  0
		\end{bmatrix}}\\
	&  \ineqov{(c)}\frac{\epsilon}{T} + 2\sqrt{\frac{
			r\epsilon}{T}\opnorm{\xsol}}.
	\end{align*}
	Step $(a)$ follows step $(a)$ in \cref{eq: fronormbound}. Step $(b)$ uses the fact that
	$\begin{bmatrix}
	0 & B\\
	B^\top  &  0
	\end{bmatrix}$ has rank at most $2r$. Step $(c)$ follows the step $(b)$ and $(c)$ in the inequality
	\cref{eq: fronormbound}.
\end{proof}

\begin{lemma} \label{lemma: auxSecOracleinequalities}
	Suppose
	$Y = \begin{bmatrix}
	A & B \\ B^\top  & D
	\end{bmatrix} \succeq 0$.
	Then $ \opnorm{A} \tr(D) \geq \nucnorm{BB^\top}.$
\end{lemma}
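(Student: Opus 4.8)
The plan is to reduce the inequality to a singular-value-by-singular-value application of the standard fact that a positive semidefinite $2\times 2$ block matrix controls its off-diagonal block by its diagonal blocks. The first ingredient I would establish is the elementary estimate: whenever $Y=\begin{bmatrix} A & B\\ B^\top & D\end{bmatrix}\succeq 0$, then for every vector $u$ of the size of $A$ and every vector $v$ of the size of $D$, one has $(u^\top B v)^2 \le (u^\top A u)(v^\top D v)$. This follows by plugging the vector $(u^\top,\, t v^\top)^\top$ into the quadratic form associated with $Y$: the scalar quadratic $t^2(v^\top D v) + 2t(u^\top B v) + (u^\top A u)$ is nonnegative for all real $t$, hence its discriminant is nonpositive. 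In particular $A\succeq 0$ and $D\succeq 0$ as diagonal blocks of $Y$.

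Next I would introduce the singular value decomposition $B=\sum_k \sigma_k u_k v_k^\top$, where $\{u_k\}$ and $\{v_k\}$ are orthonormal systems and each $\sigma_k>0$. Since $BB^\top\succeq 0$, its nuclear norm equals its trace, so $\nucnorm{BB^\top}=\tr(BB^\top)=\sum_k \sigma_k^2$. Because $\sigma_k=u_k^\top B v_k$, the estimate above gives $\sigma_k^2 \le (u_k^\top A u_k)(v_k^\top D v_k)\le \opnorm{A}\,(v_k^\top D v_k)$, using $u_k^\top A u_k\le \opnorm{A}$ for the unit vector $u_k$ together with $A\succeq 0$.

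Finally I would sum over $k$ and exploit the orthonormality of $\{v_k\}$: $\nucnorm{BB^\top}=\sum_k \sigma_k^2\le \opnorm{A}\sum_k v_k^\top D v_k\le \opnorm{A}\,\tr(D)$, where the last step completes $\{v_k\}$ to an orthonormal basis and notes that the extra terms $v^\top D v$ are nonnegative since $D\succeq 0$, while the full sum over a basis is exactly $\tr(D)$. None of the steps is genuinely difficult; the only places that require care are the block-PSD inequality in the first step and correctly using the orthonormality of the singular vectors when passing from the per-$\sigma_k$ bound to the trace bound.
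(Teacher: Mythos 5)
Your proof is correct, but it takes a genuinely different route from the paper's. The paper regularizes $A$ to $A_\epsilon = A + \varepsilon I$, uses the Schur complement $D - B^\top A_\epsilon^{-1} B \succeq 0$ to get $\tr(D) \geq \tr(A_\epsilon^{-1} B B^\top)$, invokes Von Neumann's trace inequality to lower-bound the latter by $\nucnorm{BB^\top}/\opnorm{A_\epsilon}$, and finally lets $\varepsilon \to 0$. You instead use only the elementary block-PSD estimate $(u^\top B v)^2 \le (u^\top A u)(v^\top D v)$, apply it to each pair of singular vectors of $B$ so that $\sigma_k^2 \le \opnorm{A}\, v_k^\top D v_k$, and sum using orthonormality of the $\{v_k\}$ and $D \succeq 0$ to get $\nucnorm{BB^\top} = \sum_k \sigma_k^2 \le \opnorm{A}\tr(D)$. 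Your argument is more elementary (no matrix inversion, no Von Neumann inequality, no limiting argument) and arguably more transparent about where each factor comes from; the paper's Schur-complement route is shorter once one is willing to cite Von Neumann, but it needs the $\varepsilon$-perturbation to handle singular $A$, which your approach avoids entirely. One small point of care in your write-up: when $v^\top D v = 0$ the expression in $t$ is affine rather than quadratic, and you should note that nonnegativity then forces $u^\top B v = 0$, so the inequality $(u^\top B v)^2 \le (u^\top A u)(v^\top D v)$ still holds in that degenerate case; this is a one-line fix and does not affect the rest of the argument.
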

\begin{proof}
	For any $\epsilon>0$, denote  $A_\epsilon = A + \varepsilon I$ and
	$ Y_\epsilon = \begin{bmatrix}
	A_\epsilon & B \\ B^\top  & D
	\end{bmatrix}.$  We know $Y_\epsilon$ is psd,
	as is its Schur complement
	$D - B^\top  A_\epsilon ^{-1}B \succeq 0$ with trace $\tr(D) - \tr({A}_\epsilon^{-1} BB^\top ) \geq 0.$
Von Neumann's trace inequality \cite{mirsky1975trace} for $A_\epsilon$, $BB^{\top}\succeq 0$
	shows $\tr(A_\epsilon^{-1} BB^\top ) \geq \dfrac{1}{\opnorm{A_{\epsilon}}}\nucnorm{BB^\top}$.
	Use this with $\tr(D) - \tr({A}_\epsilon^{-1} BB^\top ) \geq 0$ to see
	$\tr(D) \geq \frac{1}{\opnorm{A_\epsilon}}\nucnorm{BB^\top}.$
	Multiply by $\opnorm{A_{\epsilon}}$ and let $\varepsilon \to 0 $
	to complete the proof. %, as $\opnorm{A_\epsilon} = \opnorm{A}	+\varepsilon$.
\end{proof}
\section{Chambolle-Pock for \ref{minobj}}\label{sec: cpminobj}
Here we state how to use Chambollo-Pock to solve \cref{minobj}:
\beq  \tag{\minobj}
\ba{ll}
\mbox{minimize} & \tr( C_V S)\\
\mbox{subject to} & \|\Av{V}(S) -  b\|\leq \delta
\quad\text{and}\quad S\succeq 0. \\
\ea
\eeq
In Chambollo-Pock, we have iterates $S_k\in \sym^{r}$, $\bar{S}_k\in \sym^r$ and $y_k\in \reals^{m}$. Denote the projection 
to the $\delta$-radius ball $\mathbf{B}_{\delta}\{y\in \reals^\cons \mid \twonorm{y}\leq \delta \}$ 
as $P_{\tiny \mathbf{B}_\delta}$ and the projection to $\sym_+^r$ as $P_{\tiny \sym_+^r}$. We choose 
$\tau,\sigma$, and $\theta>0$, and start at  $S_0\in \sym_+$. The iteration scheme is as follows: 
\begin{align}
	y_{k+1} & = y_k +\sigma \left(\Av{V}(\bar{S}_k)-b\right)-\sigma P_{\tiny \mathbf{B}_\delta } \left( \frac{1}{\sigma}y_k+\Av{V}\bar{S}_k-b\right),\\ 
	S_{k+1} & = P_{\sym_+^r}\left(S_k -\tau \Av{V}^\top y_{k+1} -\tau C_V\right) ,\\ 
	\bar{S}_{k+1} & = S_{k+1} + \theta \left(S^{k+1}-S^k\right).
\end{align}
We can compute $C_V$ in $r^2
L_{C}$ time before the iteration scheme and then store it using $r^2$ storage.  Each iteration only requires one call to each $\Av{V},\Av{V}^\top$,  $P_{\tiny \mathbf{B}_\delta},$ and $ P_{\tiny \sym_+^r}$, and a constant number of addition in $\reals^m$ and $\sym_+^r$, the per iteration flop counts is $\bigO{r^2 L_{\mathcal{A}} + rL_{\mathcal{A}^\top}
	+ m + r^2n}$. We need to store residues $\Av{V}(S)-b$, one dual iterate  of size $m$, two primal iterates of size $r^2$ and a few intermediate quantities such as $\Av{V}^\top y_{k+1} $ and $\frac{1}{\sigma}y_k+\Av{V}\bar{S}_k-b$, which requires $\bigO{r^2+m}$ storage. Hence the method is indeed storage-optimal.

\section{Computable bounds of the distance to solution}\label{sec: computablebound}
We described a way of computing the distance to the solution here, given a 
bound on $\opnorm{\xsol}$ and $\sigma_{\max}(\Amap)$. We note the assumptions here are weaker.
\begin{theorem}[Computable Bounds]\label{thm: computablebound}
	Suppose \eqref{p} and \eqref{d} admit solutions
	and satisfy strong duality, \Cref{eqn:strong-duality}.
	Let $y \in \reals^m$ be a dual feasible point
	with suboptimality $\epsilon = \epsilon_d(y) = b^\top \ysol - b^\top y$.
	For a positive integer $r$, form the orthonormal matrix $V \in \reals^{n \times r}$,
	as in \cref{primalrecovery}, and compute the threshold $T = \lambda_{n-r}(Z(y))$.
	
	If $\sigma_{\min} (\mathcal{A}_V)>0$ and $T>0$ 
	and $\opnorm{\xsol }\leq B$ for some solution $\xsol$ to \eqref{p}, 
	then % with
	\begin{align}\label{explicitcomputablebound}
		\fronorm{\infeasx - \xsol } \leq\left (1+ \frac{\sigma_{\max}(\mathcal{A})}{\sigma_{\min}(\mathcal{A}_V)}\right)
		\bigg( \frac{\epsilon}{T} +
		\sqrt{2\frac{\epsilon}{T} B}\bigg). %+
		%%\frac{\twonorm{\Amap_{V}(S)-\bvec}}{\sigma_{\min}(\mathcal{A}_V)}.
	\end{align}
	%		for any $\xsol \in \mathcal{X}_\star$.
	
	Moreover, any solution $\tilde{S}$ of \cref{minobj} with infeasibility parameter
	\begin{align}\label{deltachoice}
		\delta
		\geq \delta_0
		:=\sigma_{\max}(\mathcal{A})\left(\frac{\epsilon}{T}+2\sqrt{\frac{2\epsilon
				B}{T}}\right)
	\end{align}
	leads to an $(\epsilon_0,\delta)$-solution $\objx$ % = V\tilde{S}V^\top$
	for the primal SDP \cref{p} with
	\begin{align}\label{epsilonchoice}
		\epsilon_0 =   \min\biggr\{
		\fronorm{C} \biggr(\frac{\epsilon}{T} + \sqrt{2\frac{\epsilon}{T} B}\biggr)
		, \opnorm{C} \biggr(\frac{\epsilon}{T} + \sqrt{2\frac{r \epsilon}{T} B}\biggr)
		\biggr\}.
	\end{align}
\end{theorem}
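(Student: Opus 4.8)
The plan is to read \cref{thm: computablebound} as the regularity-free counterpart of \cref{thm: minfeas} and \cref{thm: minobj}: in the proofs of those results the regularity assumptions of \Cref{sec:assumptions} enter \emph{only} through \cref{lemma: minfeas2} and \cref{thm: uniqueness}, whose sole role is to produce $T=\lambda_{n-r}(Z(y))>0$ and $\sigma_{\min}(\mathcal A_V)>0$; everything downstream uses just strong duality, primal feasibility of $\xsol$, dual feasibility of $y$ (which is exactly what makes $Z(y)\succeq 0$), those two positivity facts, and the scalar $\opnorm{\xsol}$, which appears only as an upper bound. So I would replay the chain \cref{lem: MainProjectionDistanceInnerProduct} $\Rightarrow$ \cref{lemma: minfeas0} $\Rightarrow$ \cref{lemma: minfeas1} $\Rightarrow$ (the suboptimality argument inside the proof of \cref{thm: minobj}), now \emph{assuming} $T>0$ and $\sigma_{\min}(\mathcal A_V)>0$ and writing $B$ wherever $\opnorm{\xsol}$ appeared. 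The starting identity, from the proof of \cref{lemma: minfeas0}, is that strong duality gives $b^\top\ysol=\tr(C\xsol)$ and $\mathcal A\xsol=b$, so $\epsilon=b^\top\ysol-b^\top y=\tr(C\xsol)-(\mathcal A\xsol)^\top y=\tr\big(\xsol(C-\mathcal A^\top y)\big)=\tr(\xsol Z(y))$; since $\xsol\succeq 0$ and $Z(y)\succeq 0$, \cref{lem: MainProjectionDistanceInnerProduct} applies with $X=\xsol$, $Z=Z(y)$ and $\opnorm{\xsol}\le B$ to give
\[
\fronorm{\xsol-P_V(\xsol)}\le\frac{\epsilon}{T}+\sqrt{\frac{2\epsilon}{T}B},\qquad \nucnorm{\xsol-P_V(\xsol)}\le\frac{\epsilon}{T}+2\sqrt{\frac{r\epsilon}{T}B},
\]
where $P_V(\xsol)=VV^\top\xsol VV^\top$.

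For \cref{explicitcomputablebound} I would then rerun the argument of \cref{lemma: minfeas1}: $\sigma_{\min}(\mathcal A_V)>0$ makes $f(S)=\frac12\twonorm{\mathcal A_V(S)-b}^2$ strongly convex with modulus $\sigma_{\min}^2(\mathcal A_V)$, so \cref{minfeas} has a unique solution $\hat S$ and $\infeasx=V\hat S V^\top$ is well defined; combining strong convexity, optimality of $\hat S$, $f(\hat S)\ge 0$, and $f(V^\top\xsol V)\le\frac12\sigma_{\max}^2(\mathcal A)\fronorm{P_V(\xsol)-\xsol}^2$ (which uses $\mathcal A\xsol=b$) yields $\fronorm{\hat S-V^\top\xsol V}\le\frac{\sigma_{\max}(\mathcal A)}{\sigma_{\min}(\mathcal A_V)}\fronorm{P_V(\xsol)-\xsol}$; a triangle inequality together with the unitary invariance of $\fronorm{\cdot}$ then gives the factor $1+\sigma_{\max}(\mathcal A)/\sigma_{\min}(\mathcal A_V)$ times the Frobenius estimate above.

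For the \cref{minobj} statement I would mirror the proof of \cref{thm: minobj}. The candidate $S=V^\top\xsol V\succeq 0$ obeys $\twonorm{\mathcal A_V(S)-b}=\twonorm{\mathcal A(P_V(\xsol)-\xsol)}\le\sigma_{\max}(\mathcal A)\fronorm{P_V(\xsol)-\xsol}\le\sigma_{\max}(\mathcal A)\big(\frac{\epsilon}{T}+\sqrt{\tfrac{2\epsilon}{T}B}\big)\le\delta_0\le\delta$, so $P_V(\xsol)$ is feasible for \cref{minobj}; hence \cref{minobj} has a solution $\tilde S$, and $\objx=V\tilde S V^\top$ is $\delta$-infeasible and positive semidefinite by construction. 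Optimality of $\tilde S$ gives $\tr(C\objx)=\tr(C_V\tilde S)\le\tr(C_V V^\top\xsol V)=\tr(C\,P_V(\xsol))=\pval+\tr\big(C(P_V(\xsol)-\xsol)\big)$, and H\"older's inequality applied to the last trace against $\fronorm{\xsol-P_V(\xsol)}$ and against $\nucnorm{\xsol-P_V(\xsol)}$ bounds the suboptimality of $\objx$ by $\epsilon_0$; so $\objx$ is an $(\epsilon_0,\delta)$-solution of \cref{p}.

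The only genuine work — and the one obstacle — is the audit confirming that, although \cref{lemma: minfeas0} and \cref{lemma: minfeas1} are stated under the regularity assumptions, their proofs invoke nothing beyond the ingredients listed in the first paragraph, so that dropping strict complementarity and primal/dual uniqueness costs nothing once $T>0$ and $\sigma_{\min}(\mathcal A_V)>0$ are taken as hypotheses. A minor bookkeeping point is that the constants displayed in $\delta_0$ and $\epsilon_0$ follow \cref{thm: minobj} and are somewhat loose (in particular $\delta_0$ is larger than the infeasibility of $P_V(\xsol)$ established above, which is all one needs, and the nuclear-norm branch of $\epsilon_0$ should be read with whatever constant \cref{lem: MainProjectionDistanceInnerProduct} supplies); since every bound in the chain is monotone in these constants, each step is safely recorded as ``$\le\delta_0$'' or ``$\le\epsilon_0$'' rather than an equality, which only weakens the conclusion and causes no difficulty.
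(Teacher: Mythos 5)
Your proposal is correct and follows essentially the same route as the paper: the paper's own proof simply cites \cref{lemma: minfeas0} and \cref{lemma: minfeas1} for \cref{explicitcomputablebound} and reuses the argument of \cref{thm: minobj} for $\delta_0$ and $\epsilon_0$, with $B$ standing in for $\opnorm{\xsol}$ and with $T>0$, $\sigma_{\min}(\mathcal{A}_V)>0$ taken as hypotheses rather than derived from regularity. Your observations that those lemmas' proofs use only strong duality, feasibility, and the assumed positivity facts, and that the displayed constants are slightly loose relative to the chain of estimates, are accurate and consistent with the paper.
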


\begin{proof}
	The inequality \eqref{explicitcomputablebound} is a direct application of \cref{lemma: minfeas0}
	and \cref{lemma: minfeas1}. The bound on $\delta_0$ and $\epsilon_0$ follows the same proof as in \cref{thm: minobj}.
\end{proof}

Since $\opnorm{\xsol}\in[\frac{1}{\rsol} \nucnorm{\xsol}, \nucnorm{\xsol}]$, 
we might use $\nucnorm{\xsol}$ as a substitute of the operator norm. A bound on 
$\nucnorm{\xsol}$ is often available, see
\Cref{sec: equivalent formulation}.
we can use \cref{lemma: minfeas1} to bound the distance to the
solution for \cref{minfeas}.
Moreover, based on this bound,
we can also estimate $\epsilon_p,\delta_p$ for
the solution $\objx$ of \cref{minobj} before solving it.

\subsection{Computable bounds on the operator norm} 
When no prior bound on $\opnorm{\xsol}$ is available,
we can invoke \cref{lemma: boundonopnorm} in the following
to estimate $\opnorm{\xsol}$ using any feasible point of \cref{minfeas}. However, 
to obtain a good estimate, we might need to first solve \cref{minfeas}. 

\begin{lemma}\label{lemma: boundonopnorm}
	Suppose \eqref{p} and \eqref{d} admit solutions
	and satisfy strong duality. %, \Cref{eqn:strong-duality}.
	Let $S$ be feasible for \cref{minfeas}.
	Define $\epsilon$, $T$ as in \cref{thm: computablebound} and $\kappa_V=	\frac{\sigma_{\max}(\mathcal{A})}{\sigma_{\min}(\mathcal{A}_V)}$.
	Define the scaled distance bound $\phi = (1+\kappa_V)\sqrt{\frac{\epsilon}{T}}$
	and the infeasibility $\delta_S = \twonorm{\mathcal{A}_V(S)-b}$. If $\sigma_{\min}
	(\mathcal{A}_V)>0$ $T>0$.
	Then $\opnorm{\xsol }\leq B$ for some constant $B$,
	where
	\vspace{-10pt}	
	\begin{align}\label{boundofXstar}
		B  = \constantB.
		\vspace{-10pt}
	\end{align}
\end{lemma}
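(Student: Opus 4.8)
The plan is to reduce the claim to a self-referential scalar inequality $\opnorm{\xsol}\le a+b\sqrt{\opnorm{\xsol}}$ with constants $a,b$ built from the data in the statement, and then to solve the resulting quadratic inequality; the quantities $\constantA$, $\constantD$, $\constantE$, $\constantB$ are arranged precisely so this works. Let $\xsol$ be an arbitrary solution of \eqref{p}. The first ingredient is \cref{lem: MainProjectionDistanceInnerProduct} applied with $X=\xsol\succeq 0$ and $Z=Z(y)=C-\Amap^\top y\succeq 0$ (dual feasible). Strong duality together with $\Amap\xsol=b$ gives $\tr(\xsol Z(y))=\tr(C\xsol)-(\Amap\xsol)^\top y=\pval-b^\top y=\epsilon$, while $V$ is by construction the eigenbasis of the $r$ smallest eigenvalues of $Z(y)$ and $T=\lambda_{n-r}(Z(y))>0$ by hypothesis. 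Hence
\[
\rho:=\fronorm{\xsol-P_V(\xsol)}\le \frac{\epsilon}{T}+\sqrt{\frac{2\epsilon}{T}\opnorm{\xsol}};
\]
note only strong duality, dual feasibility and $T>0$ enter here, matching the lemma's hypotheses.

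Next I would use the feasible point $S$ of \cref{minfeas} to control the compressed part $\opnorm{P_V(\xsol)}=\opnorm{V^\top\xsol V}$. Since $\Amap\xsol=b$ and $\Amap(P_V(\xsol))=\Av{V}(V^\top\xsol V)$, we have $\Av{V}(S-V^\top\xsol V)=(\Av{V}(S)-b)+\Amap(\xsol-P_V(\xsol))$, so $\twonorm{\Av{V}(S-V^\top\xsol V)}\le \delta_S+\sigma_{\max}(\Amap)\rho$, and dividing by $\sigma_{\min}(\Av{V})>0$ gives $\fronorm{S-V^\top\xsol V}\le \frac{\delta_S}{\sigma_{\min}(\Av{V})}+\kappa_V\rho$. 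Using $\opnorm{\cdot}\le\fronorm{\cdot}$ and two triangle inequalities,
\[
\opnorm{\xsol}\le \opnorm{\xsol-P_V(\xsol)}+\opnorm{V^\top\xsol V}\le \rho+\opnorm{S}+\frac{\delta_S}{\sigma_{\min}(\Av{V})}+\kappa_V\rho=\constantA+(1+\kappa_V)\rho.
\]

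Now substitute the bound on $\rho$. Recalling $\phi=(1+\kappa_V)\sqrt{\epsilon/T}$, so that $\constantE=\phi$ and $\constantD=\phi/(1+\kappa_V)=\sqrt{\epsilon/T}$, the previous display becomes $\opnorm{\xsol}\le \constantA+(1+\kappa_V)\frac{\epsilon}{T}+\sqrt{2}\,\constantE\sqrt{\opnorm{\xsol}}$, where I used $(1+\kappa_V)\sqrt{2\epsilon/T}=\sqrt{2}\,\constantE$. Bounding the first-order term $(1+\kappa_V)\frac{\epsilon}{T}$ by $\constantD$ — valid in the regime $\phi\le 1$ where the estimate is informative — puts this in the form $x\le a+b\sqrt{x}$ with $a=\constantA+\constantD$ and $b=\sqrt{2}\,\constantE$. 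Treating $\sqrt{x}$ as the unknown in $x-b\sqrt{x}-a\le 0$ and using the quadratic formula, $\sqrt{\opnorm{\xsol}}\le \frac{1}{2}(b+\sqrt{b^2+4a})$, hence
\[
\opnorm{\xsol}\le \frac{1}{4}\bigl(\sqrt{b^2+4a}+b\bigr)^2=\frac{1}{4}\bigl(\sqrt{2\constantE^2+4\constantA+4\constantD}+\sqrt{2}\constantE\bigr)^2=\constantB,
\]
which is the asserted bound $B$.

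The only real difficulty is bookkeeping: one must route the three triangle-inequality estimates so the constants assemble into exactly $\constantB$, and dispose of the lower-order $\epsilon/T$ term cleanly. All the analytic content is already in hand — \cref{lem: MainProjectionDistanceInnerProduct} for the projection error, and the injectivity of $\Av{V}$ (used via strong convexity in \cref{lemma: minfeas1}) for the compressed-variable estimate — so no new inequalities are needed.
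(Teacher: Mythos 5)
Your proof is correct and follows essentially the same route as the paper: both reduce the claim to the self-bounding inequality $\opnorm{\xsol}\leq \opnorm{S}+\delta_S/\sigma_{\min}(\Av{V})+(1+\kappa_V)\bigl(\tfrac{\epsilon}{T}+\sqrt{2\tfrac{\epsilon}{T}\opnorm{\xsol}}\bigr)$ and solve the quadratic, the only cosmetic difference being that the paper passes through the minimizer $\ssol$ of \cref{minfeas} (via \cref{lemma: minfeas1} and $\fronorm{S-\ssol}\leq \delta_S/\sigma_{\min}(\Av{V})$) whereas you bound $\fronorm{S-V^\top \xsol V}$ directly from $\sigma_{\min}(\Av{V})$ and \cref{lem: MainProjectionDistanceInnerProduct}. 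Your caveat about replacing $(1+\kappa_V)\tfrac{\epsilon}{T}$ by $\constantD$ only when $\phi\leq 1$ is well taken: the exact solve produces $4\constantE\constantD$ where the stated $B$ has $4\constantD$, a discrepancy the paper's ``solve the above inequality'' step silently absorbs, so your version is, if anything, the more careful one.
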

\vspace{-5pt}
\begin{proof}
	Use inequality \cref{eq: lemmaminfeas1e1} in \cref{lemma:	minfeas1} to see
	$\fronorm{S-\ssol}\leq \frac{\delta_S}{\sigma_{\min}(\mathcal{A}_V)}$
	for a minimizer $\ssol$ of \cref{minfeas}. Combine this with \cref{fineq} in \cref{lemma:
		minfeas1} to obtain
	\vspace{-10pt}	
	\[
	\fronorm{VS V^\top  - \xsol } \leq (1+ \kappa_V)\bigg( \frac{\epsilon}{T} +
	\sqrt{2\frac{\epsilon}{T} \opnorm{\xsol}}\bigg) +
	\frac{\delta_S}{\sigma_{\min}(\mathcal{A}_V)}.
	\vspace{-5pt}
	\]
	Because $\opnorm{VSV^\top -\xsol} \geq \opnorm{X}- \opnorm{S}$, we further have
	\vspace{-10pt}
	\[
	\opnorm{\xsol}- \opnorm{S} \leq (1+ \kappa_V)\bigg( \frac{\epsilon}{T} +
	\sqrt{2\frac{\epsilon}{T} \opnorm{\xsol}}\bigg) +
	\frac{\delta_S}{\sigma_{\min}(\mathcal{A}_V)}.
	\vspace{-5pt}
	\]
	Solve the above inequality	for	$\opnorm{\xsol}$ to find a formula for the bound $B$.
\end{proof}

The quantities $T,V$ appearing in \cref{thm: computablebound}
can all be computed from available information. The problem is evaluating 
$\sigma_{\max}(\Amap)$ appearing in $\kappa_V$, which 
might require evaluating $\Amap$ on full $n\times n$ matrices. Of course, 
It might be possible to know $\sigma_{\max}(\Amap)$ in priori if we have some 
structure information of it. 
%For example, we can apply a Krylov subspace method
%\cite{trefethen1997numerical},
%since the matrix or operator associated with each can be
%applied to vectors or low-rank matrices.

\subsection{A few words on well-posedness} \label{par:wellposedness}
\Cref{thm: computablebound} makes no guarantee on the quality
of the reconstructed primal when
$\min\{\sigma(\mathcal{A}_V), T\} = 0$.
In fact, this failure signals either that $y$
is far from optimality, or that the primal~\cref{p} or dual~\cref{d}
is degenerate (violating the assumptions from \Cref{sec:assumptions}).

To see this, suppose for simplicity,
we know the rank $r=\rsol$ of a solution to~\cref{p}.
If $y$ is close to $\ysol$ and the primal~\cref{p} and dual~\cref{d} are degenerate,
then \cref{lemma: minfeas2} shows that the quantities $\min\{\sigma(\mathcal{A}_V), T\} $
are close to $\min\{\sigma(\mathcal{A}_\vrepox), \lambda_{n-\rsol}(\zsol)\}$.
Furthermore, \cref{lemma: minfeas2} shows that our assumptions
(from \Cref{sec:assumptions}) guarantee
$\min\{\sigma(\mathcal{A}_\vrepox), \lambda_{n-\rsol}(\zsol)\} >0$.
Thus if $\min\{\sigma(\mathcal{A}_V), T\} = 0$, then either
we need a more accurate solution to the dual problem
to recover the primal,
or the problem is degenerate and our assumptions fail to hold.
\section{Lemmas for fixing infeasible dual iterates in \Cref{sec: sd}}

We present one lemma to bound the infeasibility of a dual vector $y$,
%with respect to the dual SDP \cref{d}
and another to show how to construct a feasible $y$ from an infeasible one.

\begin{lemma}\label{lm: boudningdualinfeasibility}
	Suppose \eqref{p} and \eqref{d} admit solutions
	and satisfy strong duality, \Cref{eqn:strong-duality}. Let $\ubar{\alpha}:\,= \inf_{X\in \xsolset}\tr(X)$ where 
	$\xsolset$ is the solution set of \eqref{p}. 
	For any dual vector $y$
	with suboptimality $\tr(C\xsol)- \objp(y) \leq \epsilon$ with $\alpha >
	\ubar{\alpha}$, we have
	$
	\lambda_{\min} (Z(y)) \geq -\dfrac{\epsilon}{\alpha - \ubar{\alpha}}.
	$
\end{lemma}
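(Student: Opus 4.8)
The plan is to probe the penalized dual objective at a primal optimal point and read off the feasibility bound from the resulting inequality chain. By definition of the penalized dual SDP~\cref{problem: dp}, $\objp(y)=b^\top y+\alpha\min\{\lambda_{\min}(Z(y)),0\}$ with $Z(y)=C-\Amap^\top y$; I would set $\mu:=\min\{\lambda_{\min}(Z(y)),0\}\le 0$, so that $\objp(y)=b^\top y+\alpha\mu$. First I fix any primal solution $\bar X\in\xsolset$ with $\tr(\bar X)<\alpha$, which exists because $\alpha>\ubar\alpha=\inf_{X\in\xsolset}\tr(X)$. Since $\bar X$ is primal feasible and optimal, $\Amap\bar X=b$ and $\tr(C\bar X)=\pval=\tr(C\xsol)$, hence
\[
b^\top y \;=\; \langle \Amap\bar X,\,y\rangle \;=\; \langle \bar X,\,\Amap^\top y\rangle \;=\; \tr\!\big(\bar X(C-Z(y))\big) \;=\; \tr(C\xsol)-\tr(\bar X Z(y)).
\]

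Next I would lower-bound $\tr(\bar X Z(y))$ using $\bar X\succeq 0$: since $Z(y)-\lambda_{\min}(Z(y))I\succeq 0$, one has $\tr(\bar X Z(y))\ge\lambda_{\min}(Z(y))\tr(\bar X)\ge\mu\,\tr(\bar X)$, the last step using $\lambda_{\min}(Z(y))\ge\mu$ together with $\tr(\bar X)\ge 0$. Substituting into the identity above,
\[
\objp(y) \;=\; b^\top y+\alpha\mu \;\le\; \tr(C\xsol)-\mu\,\tr(\bar X)+\alpha\mu \;=\; \tr(C\xsol)+\mu\big(\alpha-\tr(\bar X)\big).
\]
Rearranging and invoking the hypothesis $\tr(C\xsol)-\objp(y)\le\epsilon$ gives $-\mu\big(\alpha-\tr(\bar X)\big)\le\epsilon$. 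Since $\alpha-\tr(\bar X)>0$, this yields $\mu\ge-\epsilon/(\alpha-\tr(\bar X))$, and therefore $\lambda_{\min}(Z(y))\ge\mu\ge-\epsilon/(\alpha-\tr(\bar X))$.

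Finally I would pass to the infimum: choosing a sequence $\bar X_k\in\xsolset$ with $\tr(\bar X_k)\downarrow\ubar\alpha$ (so that $\tr(\bar X_k)<\alpha$ for all large $k$) and letting $k\to\infty$ in the previous bound turns it into $\lambda_{\min}(Z(y))\ge-\epsilon/(\alpha-\ubar\alpha)$, which is the claim. I do not expect a serious obstacle; the only two points requiring a little care are (i) treating the dual-feasible case $\lambda_{\min}(Z(y))\ge 0$ uniformly with the infeasible case --- this is why I clip to $\mu=\min\{\lambda_{\min}(Z(y)),0\}$ rather than working with $\lambda_{\min}(Z(y))$ directly (and, incidentally, this also shows $\epsilon\ge 0$ is forced) --- and (ii) the fact that $\ubar\alpha$ need not be attained on $\xsolset$, which requires the limiting step at the end in place of a single evaluation at an optimal $\bar X$.
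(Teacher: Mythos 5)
Your proof is correct and follows essentially the same route as the paper's: evaluate the penalized objective gap at a primal optimal point, use $\Amap\bar X=b$ together with the bound $\tr(\bar X Z(y))\geq \lambda_{\min}(Z(y))\,\tr(\bar X)$ (the paper cites Von Neumann's inequality where you argue via $Z(y)-\lambda_{\min}(Z(y))I\succeq 0$), rearrange, and let $\tr(\bar X)\to\ubar{\alpha}$. The only cosmetic difference is that you clip to $\mu=\min\{\lambda_{\min}(Z(y)),0\}$ to treat the feasible case uniformly, whereas the paper simply dispatches the case $\lambda_{\min}(Z(y))\geq 0$ at the outset.
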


This lemma shows infeasibility decreases at the same speed as suboptimality.
\begin{proof}
	Let $Z = C - \mathcal{A} ^\top y$. Assume $\lambda_{\min}(Z)<0$.
	(Otherwise, we are done.) % since $Z$ is already psd
	Then for any $\xsol \in \xsolset$
	\beq
	\ba{lll}
	\tr (C\xsol) - \objp(y)&  = \tr(C\xsol)  -b^\top y- \alpha \lambda_{\min}(Z)  \\
	&	 = \tr(C\xsol) - (\mathcal{A}\xsol )^{\top} y -\alpha \lambda_{\min}(Z)\\
	%	& = \tr(C\xsol) - \tr((\mathcal{A}^\topy)\xsol )-\alpha \lambda_{\min}(Z)\\
	&	= \tr(Z\xsol ) -\alpha \lambda_{\min} (Z).
	\ea
	\eeq
	Using the suboptimality assumption and Von Neumann's inequality, we further
	have
	\beq
	\ba{l}
	\epsilon \geq \tr(Z\xsol) -\alpha \lambda_{\min}(Z)\geq \tr(\xsol)
	\lambda_{\min}(Z) -\alpha \lambda_{\min}(Z) .
	\ea
	\eeq
	Rearrange to see
	$\lambda_{\min} (Z) \geq -\dfrac{\epsilon}{\alpha - \tr(\xsol)}.$
	Let	$\tr(\xsol) \to \ubar{\alpha}$ to obtain the result.
\end{proof}

We next show how to construct an  $\epsilon$-suboptimal and feasible dual
vector
from an $\epsilon$-suboptimal and potentially infeasible dual vector.%The
%transformation might be known as the averaging trick.

\begin{lemma}\label{lm: averaging}
	Suppose \eqref{p} and \eqref{d} admit solutions
	and satisfy strong duality, \Cref{eqn:strong-duality}.
	Further suppose a dual vector $y_1$ with $Z_1 = C-\mathcal{A}^\top y_1$
	is infeasible with
	$-\epsilon\leq \lambda_{\min} (Z_1)<0$ and $y_2$ with $Z_2=C-\mathcal{A}^\top y_2$
	is strictly feasible in the sense that $\lambda_{\min}(Z_2)>0$, then the dual
	vector $$y_\gamma = \gamma y_1 + (1-\gamma)y_2$$
	is feasible for $\gamma= \dfrac{\lambda_{\min}(Z_2)}{\epsilon +
		\lambda_{\min}(Z_2)}$. The objective value of
	$y_\gamma$ is
	$$\objp(y_\gamma) =  \dfrac{\lambda_{\min}(Z_2)}{\epsilon +
		\lambda_{\min}(Z_2)}  b^\top y_1  +   \dfrac{\epsilon}{\epsilon +
		\lambda_{\min}(Z_2)} b^\top y_2$$
\end{lemma}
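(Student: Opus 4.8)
The plan is to exploit two elementary facts: the slack operator $Z(\cdot) = C - \mathcal{A}^\top(\cdot)$ is affine, so $Z(y_\gamma)$ is the convex combination $\gamma Z_1 + (1-\gamma)Z_2$ whenever $\gamma\in[0,1]$; and $\lambda_{\min}(\cdot)$ is concave on $\sym^n$. Feasibility of $y_\gamma$ then reduces to choosing $\gamma$ so that this convex combination is positive semidefinite, after which the objective value is immediate because the penalty term vanishes.

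First I would note that, using $\gamma + (1-\gamma) = 1$,
\[
Z(y_\gamma) = C - \mathcal{A}^\top\bigl(\gamma y_1 + (1-\gamma)y_2\bigr) = \gamma\bigl(C-\mathcal{A}^\top y_1\bigr) + (1-\gamma)\bigl(C-\mathcal{A}^\top y_2\bigr) = \gamma Z_1 + (1-\gamma)Z_2 .
\]
Concavity of $\lambda_{\min}$ (equivalently, Weyl's inequality $\lambda_{\min}(A+B)\ge\lambda_{\min}(A)+\lambda_{\min}(B)$ applied with $A=\gamma Z_1$ and $B=(1-\gamma)Z_2$, together with $\lambda_{\min}(\gamma Z_1)=\gamma\lambda_{\min}(Z_1)$ since $\gamma\ge 0$) gives
\[
\lambda_{\min}\bigl(Z(y_\gamma)\bigr) \;\ge\; \gamma\,\lambda_{\min}(Z_1) + (1-\gamma)\,\lambda_{\min}(Z_2) \;\ge\; -\gamma\epsilon + (1-\gamma)\,\lambda_{\min}(Z_2),
\]
where the last inequality uses the hypothesis $\lambda_{\min}(Z_1)\ge -\epsilon$. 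The right-hand side is nonnegative precisely when $\gamma\le\lambda_{\min}(Z_2)/(\epsilon+\lambda_{\min}(Z_2))$; since $\lambda_{\min}(Z_2)>0$ and $\epsilon>0$ (the latter because $\lambda_{\min}(Z_1)<0$), this threshold lies in $(0,1)$, so the prescribed $\gamma=\lambda_{\min}(Z_2)/(\epsilon+\lambda_{\min}(Z_2))$ is a legitimate convex-combination weight and forces $Z(y_\gamma)\succeq 0$, i.e.\ $y_\gamma$ is dual feasible.

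For the objective value, feasibility of $y_\gamma$ makes the penalty $\alpha\min\{\lambda_{\min}(Z(y_\gamma)),0\}$ in~\cref{problem: dp} equal to zero, so the penalized dual objective at $y_\gamma$ is simply $b^\top y_\gamma = \gamma\, b^\top y_1 + (1-\gamma)\, b^\top y_2$; substituting $\gamma=\lambda_{\min}(Z_2)/(\epsilon+\lambda_{\min}(Z_2))$ and $1-\gamma=\epsilon/(\epsilon+\lambda_{\min}(Z_2))$ produces the displayed formula. I do not expect any real obstacle here: the only points needing care are checking that the prescribed $\gamma$ genuinely lies in $[0,1]$ (so $y_\gamma$ is an honest convex combination), which follows from $\lambda_{\min}(Z_2)>0>\lambda_{\min}(Z_1)\ge-\epsilon$, and the elementary algebra that rearranges $-\gamma\epsilon+(1-\gamma)\lambda_{\min}(Z_2)\ge 0$ into the stated value of $\gamma$.
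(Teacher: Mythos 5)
Your proposal is correct and follows essentially the same route as the paper's own (one-line) proof: linearity of the slack map $y \mapsto C - \mathcal{A}^\top y$ plus concavity of $\lambda_{\min}$, with the chosen $\gamma \in (0,1)$ making the lower bound $-\gamma\epsilon + (1-\gamma)\lambda_{\min}(Z_2)$ nonnegative, and the objective formula then following since the penalty term vanishes at a feasible point. Your write-up simply makes explicit the details the paper leaves implicit.
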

\begin{proof}
	The results follow from the linearity of $C-\mathcal{A}^\top y$ and the
	concavity of $\lambda_{\min}$.
\end{proof}

\section{Additional numerics}\label{sec: additionalNumerics}
%\subsection{Compatible with various dual solvers and effectivenss of \Cref{alg: accelgrad+minfeas} }
%we show that
%\Cref{alg: accelgrad+minfeas} (Dual Algorithm $+$ Primal Recovery)
%effectively solves the primal SDP \cref{p} with a variety of dual solvers. We perform the 
%primal recovery in each iteration of the dual algorithm. The two problem instances is 
%the same as \ref{sec: primalRecovery}.
\subsection{Primal Recovery for Matrix Completion}\label{sec: primalrecoveryMatrixCompletion} For matrix completion, we generate a random rank $5$ matrix $\trux \in
\reals^{1000\times 1500}$.
We generate the set $\Omega$ by observing each entry
of $\trux$ with probability $0.025$ independently.
To evaluate our method, we compare the recovered primal with $\trux$, which
(with high probability) solves the Matrix Completion problem
\cite{candes2010power}. The rest of the setting is exactly the same as in \cref{sec: primalRecovery}.
The plot is shown in  and we come to the same conclusion as in \cref{sec: primalRecovery}. 
\begin{figure}[tbhp]
	\centering
	\includegraphics[width=1\linewidth]{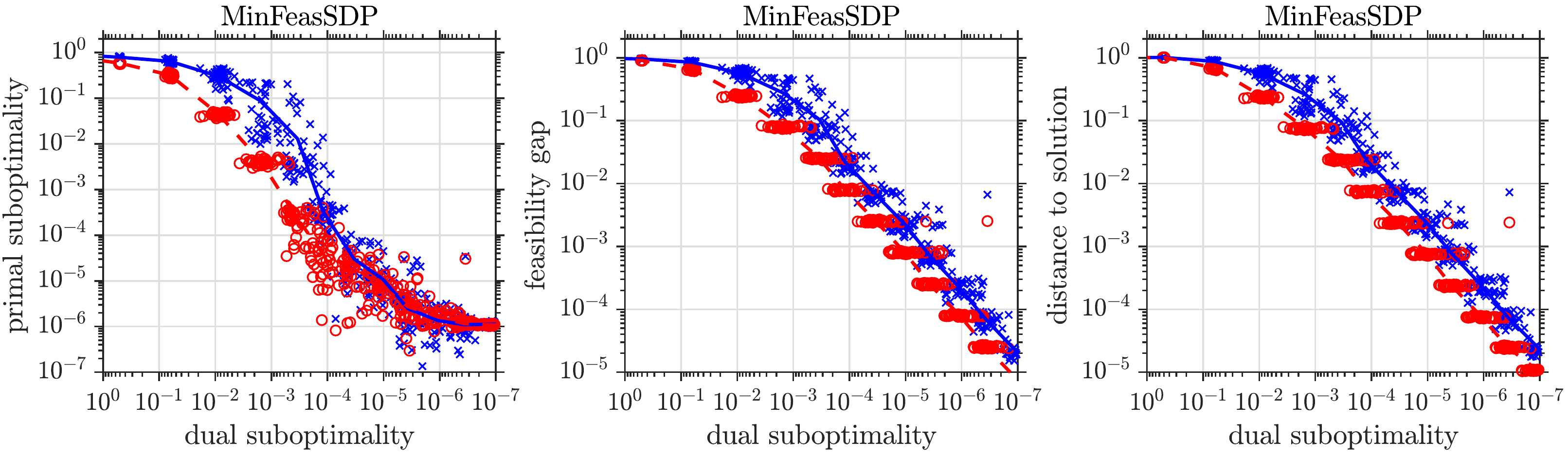}
	\caption{\cref{minfeas}}
	\includegraphics[width=1\linewidth]{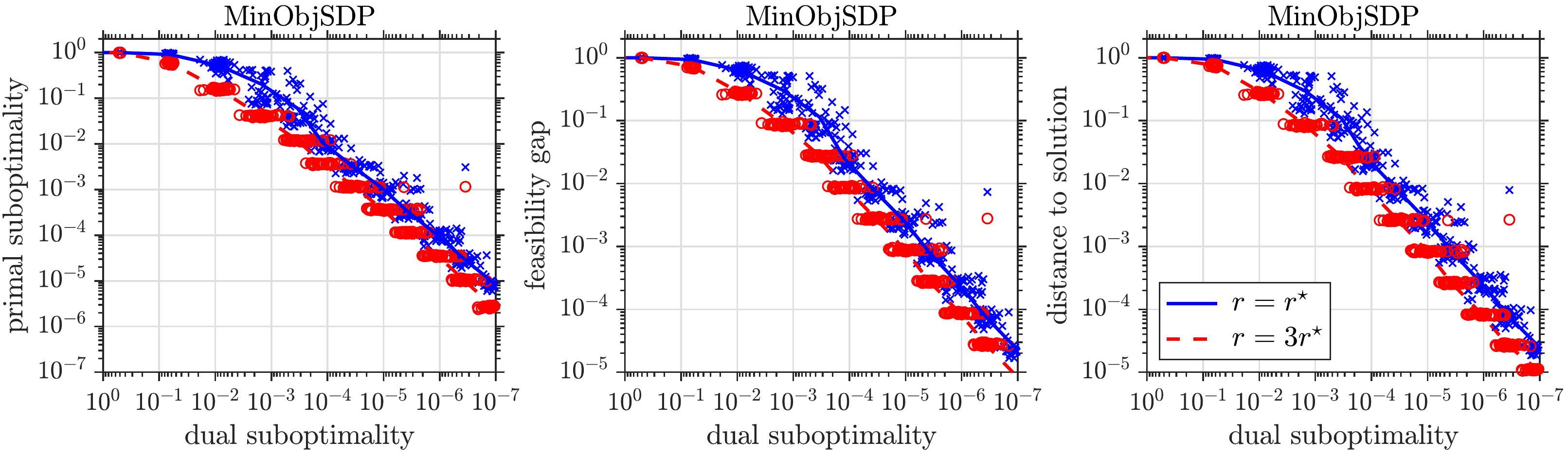}
	\caption{\cref{minobj}}
	\caption[Matrix Completion]{The plots shows the primal recovery performance of
		\cref{minfeas} (upper) and \cref{minobj} (lower) in terms of primal
		suboptimality,
		infeasibility and the distance to solution for the matrix completion problem. The horizontal axis is the dual
		suboptimality. The blue dots corresponds to the choice $r=\rsol$ and the red
		dots corresponds to the choice  $r=3\rsol$ in \cref{alg: accelgrad+minfeas}.}\label{fig:proofofconceptMatrixCompeltion}
\end{figure}
\subsection{Solving primal SDP with various dual solvers of 
	medium scale problems}
In this section we show that \Cref{alg:	accelgrad+minfeas}
(Dual Algorithm $+$ Primal Recovery) solves the primal SDP,
using the dual solvers
AdaGrad \cite{duchi2011adaptive}, AdaNGD \cite{levy2017online},
and AcceleGrad. Here we perform the primal 
recovery in every iteration of the dual algorithms. The problem 
instance for max-cut is the same as \Cref{sec: primalRecovery},
and the instance for matrix completion is the same as \Cref{sec: primalrecoveryMatrixCompletion}.
Here we use \cref{minfeas} to recover the primal.
The numerical results are shown in \Cref{fig:Max-CutMc}.
We plot the relative dual suboptimality, primal suboptimality, infeasibility
and distance to solution (as explained in \cref{sec: primalRecovery}) for each iteration of the dual method.
The solid lines show recovery with $r=\rsol$ while
the dotted lines use the higher rank $r=3\rsol$.

We observe convergence in each of these metrics, as expected from theory.
Primal and dual suboptimality converge faster than the other two
quantities, as in \Cref{fig:proofofconcept}.
Interestingly, while
AccelGrad converges much faster than the other algorithms on the dual side,
its advantage on the primal side is more modest.
We again see that the primal recovered using the larger rank $r=3\rsol$
converges more quickly, though interestingly using the higher rank confers
less of an advantage in reducing distance to the solution
than in reducing primal suboptimality and infeasibility.
\begin{figure}[tbhp]
	\centering
	%	\begin{subfigure}[b]{1\textwidth}
	\includegraphics[width=1\linewidth]{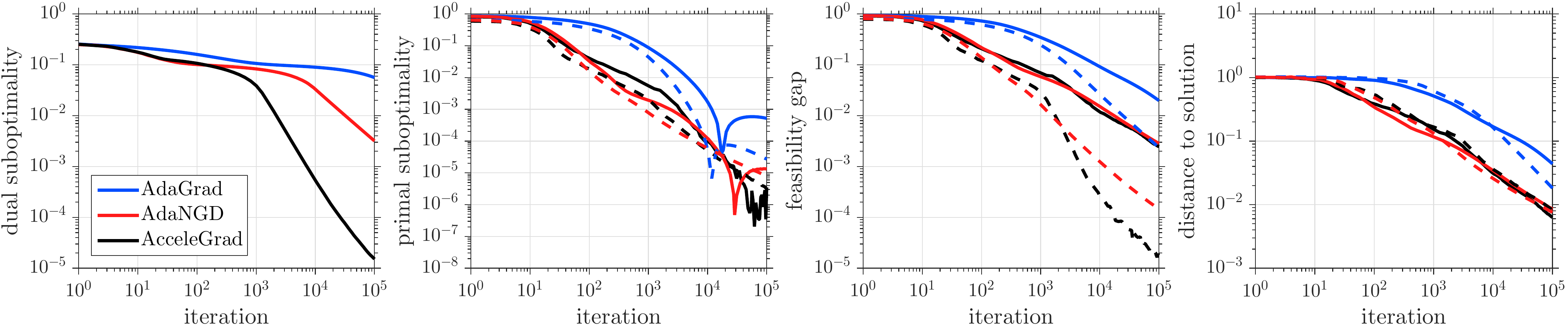}
	\caption{\textbf{Max-Cut}}
	\label{Figure: Max-CutTest1}
	%	\end{subfigure}
	%	\begin{subfigure}[b]{1\textwidth}
	\includegraphics[width=1\linewidth]{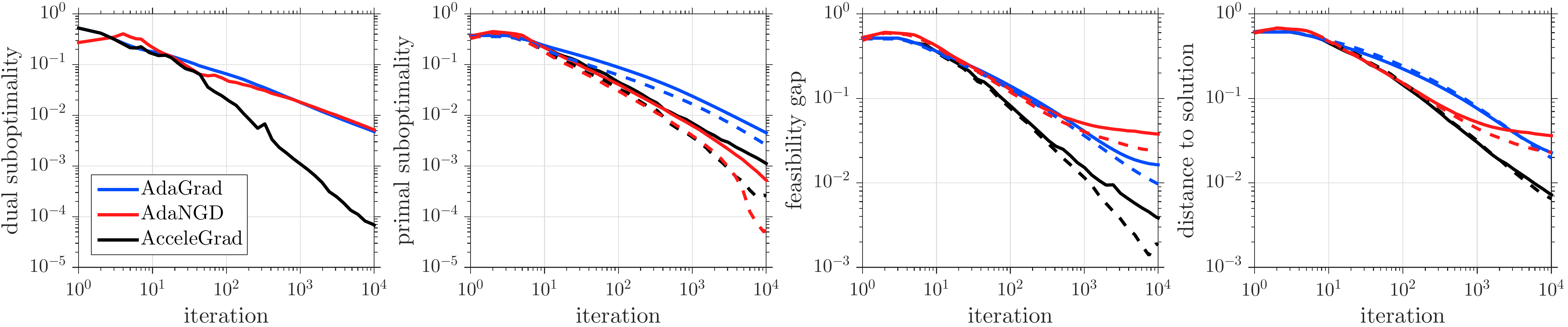}
	\caption{\textbf{Matrix Completion}}
	\label{Figure: MatcompTest1}
	%	\end{subfigure}
	\caption[Max-Cut and Matrix Completion]{Plots from left to right
		columns show convergence of penalized dual objective
		$\objp$, primal suboptimality, infeasibility, and distance to solution.
		The solid lines show recovery with $r=\rsol$ while
		the dotted lines use the higher rank $r=3\rsol$.}\label{fig:Max-CutMc}
	\vspace{-30pt}
\end{figure}
\subsection{Accuracy versus time and comparison to existing solvers} In this section, we present additional numerics regarding accuracy versus time to \Cref{sec: accuracyversusTime}. We perform the same procedure as there to Max-Cut SDP  for G1 ($800^2$), G45 ($10^3\times 10^3$) and G67 ($10^4\times 10^4$) in the Gset, and G\_n\_pin\_pout $(10^5\times 10^5)$ in the DIAMCS10 group. The results can be found in \Cref{fig:accuracy_versus time_and_iteration_counterSupplementMaxCut}.
Results for a matrix completion problem, simulated as described in \Cref{sec: storageVSdimension} with $c=1,10,$ and $100$,
with decision variable size $(n_1+n_2)^2=(125c)^2$ with $n_1=75c$ and $n_2 = 50c$, and $25(n_1+n_2)\log(n_1+n_2)$ 
are shown in \cref{fig:accuracy_versus time_and_iteration_counterSupplementMatrixCompletion}. We also compare with existing solver SDPNAL+ for medium scale problems: Max-Cut problem G45 and matrix completion with $n_1+n_2=1250$. 
We note our method achieves medium accuracy $10^{-3}$ in less than $100$ seconds 
for medium scale problems. Such results are comparable or even better than SDPNAL+. 
\begin{figure}[H]%[tbhp]
	\centering
	%		\begin{subfigure}[b]{1\textwidth}
	\includegraphics[width=1\linewidth]{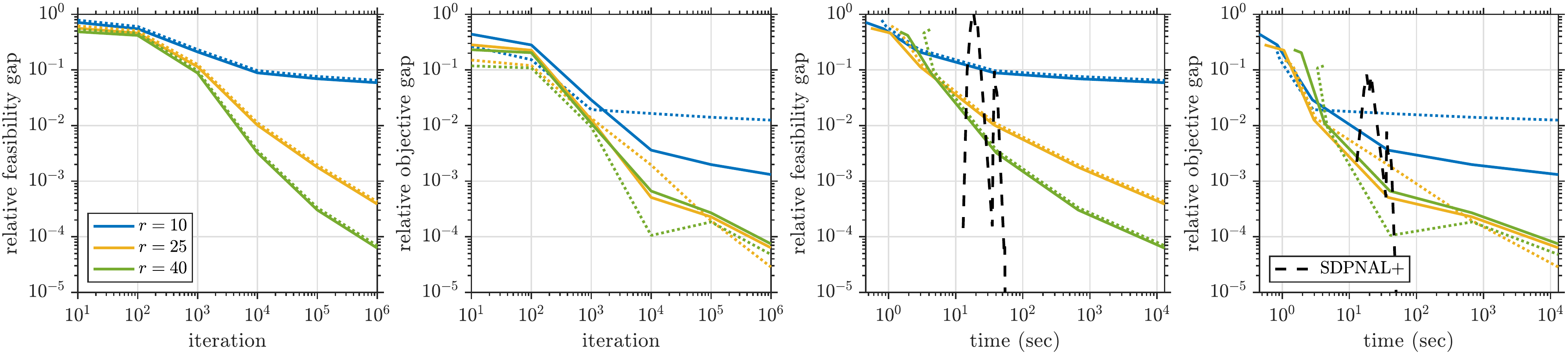}
	\caption{Max-Cut: G1}
	\label{Figure:MaxCutTimeAccuraycG1}
	%		\end{subfigure}
	\centering
	%		\begin{subfigure}[b]{1\textwidth}
	\includegraphics[width=1\linewidth]{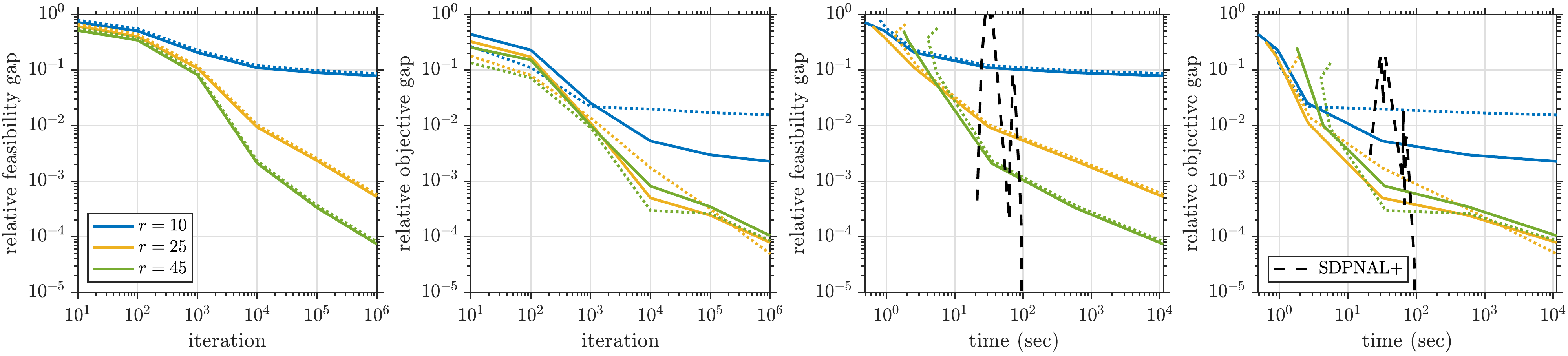}
	\caption{Max-Cut: G45}
	\label{Figure:MaxCutTimeAccuraycG45}
	%		\end{subfigure}
	\centering
	%		\begin{subfigure}[b]{1\textwidth}
	\includegraphics[width=1\linewidth]{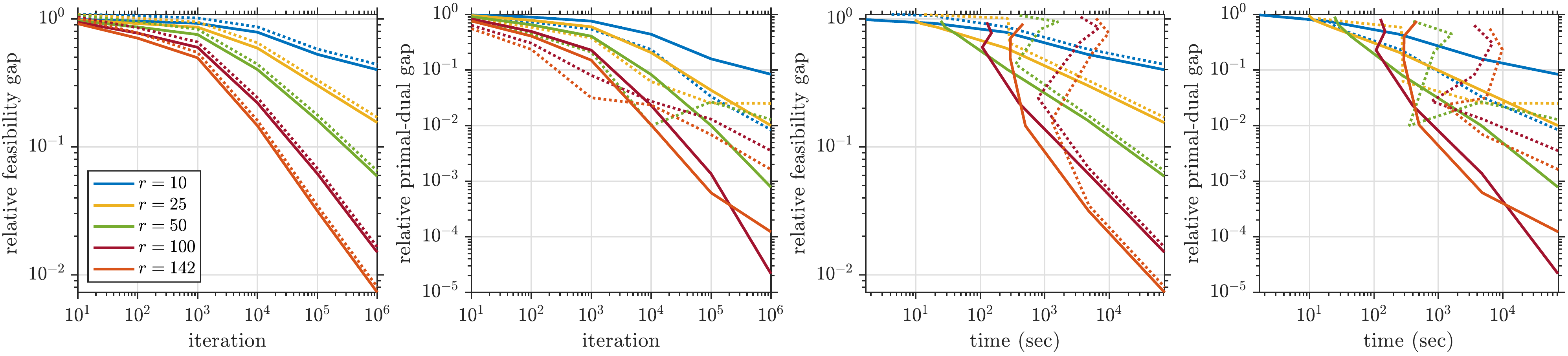}
	\caption{Max-Cut: G75}
	\label{Figure:MaxCutTimeAccuracyG65}
	%		\end{subfigure}
	\centering
	%		\begin{subfigure}[b]{1\textwidth}
	\includegraphics[width=1\linewidth]{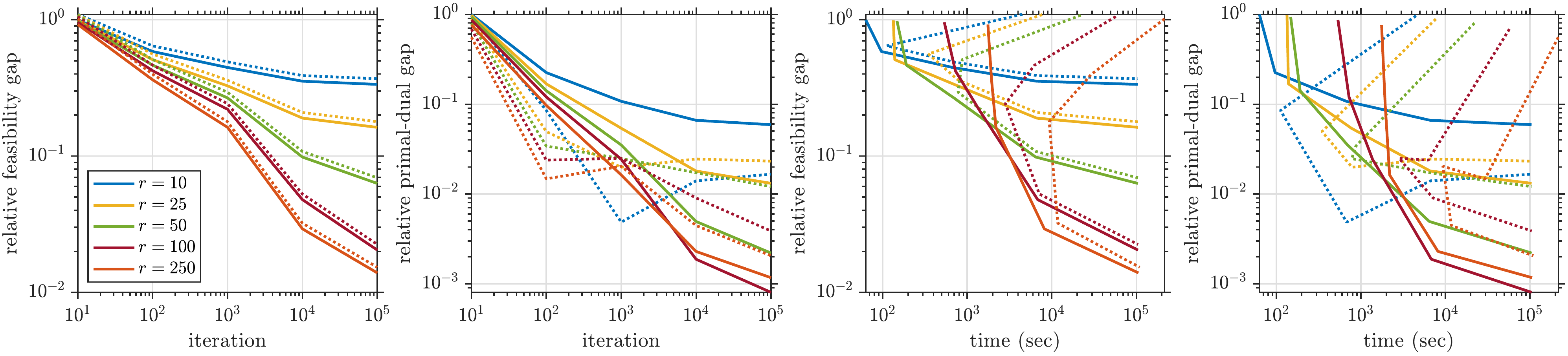}
	\caption{Max-Cut: G\_n\_pin\_pout}
	\label{Figure:MaxCutTimeAccuracyG_n_pin_pout}
	%		\end{subfigure}
	\caption[Accuracy versus actual time and iterations]{Here we show the convergence of scaled suboptimality and infeasibility of our \cref{alg: accelgrad+minfeas} (option 1 as the solid line and option 2 as the dotted line.) against the actual time and iteration counter.}\label{fig:accuracy_versus time_and_iteration_counterSupplementMaxCut}
\end{figure} 
\begin{figure}[H]%[tbhp]
	\centering
	%		\begin{subfigure}[b]{1\textwidth}
	\includegraphics[width=1\linewidth]{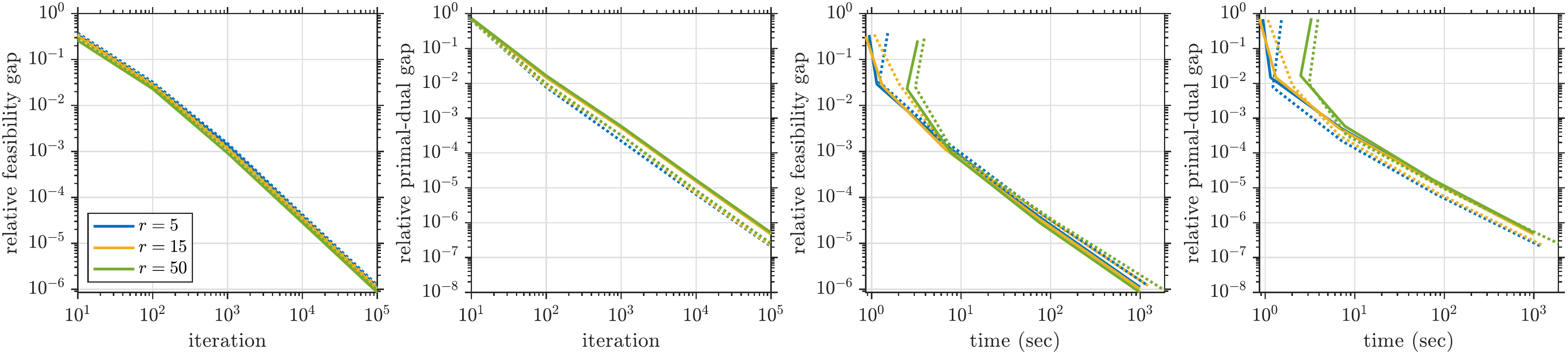}
	\caption{$n_1=75$, $n_2=50$}
	\label{Figure:MatrixCompletion-1}
	%		\end{subfigure}
	\centering
	%		\begin{subfigure}[b]{1\textwidth}
	\includegraphics[width=1\linewidth]{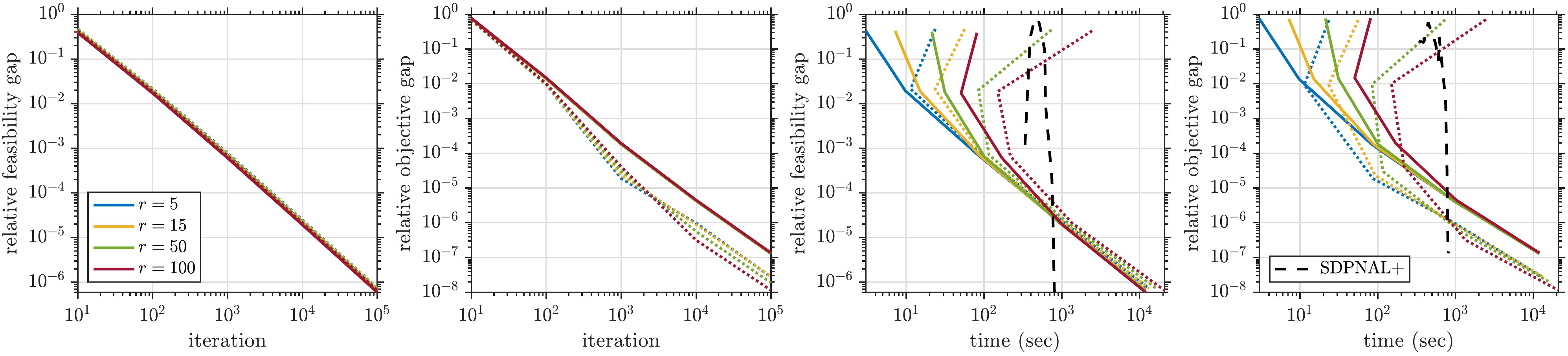}
	\caption{$n_1=750,n_2=500$}
	\label{Figure:MatrixCompletion-10}
	%		\end{subfigure}
	\centering
	%		\begin{subfigure}[b]{1\textwidth}
	\includegraphics[width=1\linewidth]{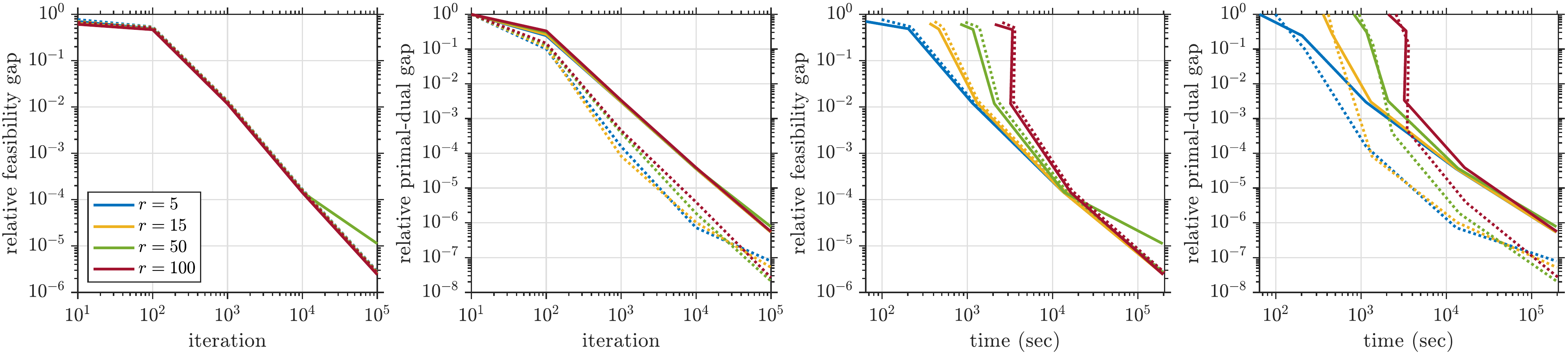}
	\caption{$n_1=7500,n_2=5000$}
	\label{Figure:MaxtrixCompletion-100}
	%		\end{subfigure}
	\caption[Accuracy versus actual time and iterations]{Here we show the convergence of scaled suboptimality and infeasibility of our \cref{alg: accelgrad+minfeas} (option 1 as the solid line and option 2 as the dotted line.) against the actual time and iteration counter.}\label{fig:accuracy_versus time_and_iteration_counterSupplementMatrixCompletion}
\end{figure} 
\section*{Acknowledgments}
Lijun Ding and Madeleine Udell were supported in part by DARPA Award FA8750-17-2-0101. Parts of this research were conducted while Madeleine Udell was in residence at the Simons Institute. Alp Yurtsever and Volkan Cevher have received funding for this project from the European Research Council (ERC) under the European Union's Horizon $2020$ research and innovation programme (grant agreement no~$725594$-time-data), and from the Swiss National Science Foundation (SNSF) under grant number $200021\_178865/1$. Joel A.~Tropp was supported in part by ONR Awards No. N-00014-11-1002, N-00014-17-12146, and N-00014-18-12363.
	\bibliographystyle{siamplain}
	\bibliography{references}
\end{document}